\newcommand*{\Geo}{\operatorname*{Geo}}
\newcommand*{\IP}{\mathbb{P}}
\newcommand*{\IE}{\mathbb{E}}
\newcommand*{\IR}{\mathbb{R}}
\newcommand*{\IN}{\mathbb{N}}
\newcommand{\ind}{{\bf 1}}
\newcommand*{\F}{\mathcal{F}}
\newtheorem{theorem}{Theorem}[section]
\newtheorem{definition}[theorem]{Definition}
\newtheorem{lemma}[theorem]{Lemma}
\newtheorem{corollary}[theorem]{Corollary}
\newtheorem{remark}[theorem]{Remark}
\newtheorem{proposition}[theorem]{Proposition}
\newtheorem{example}[theorem]{Example}
\newproof{proof}{Proof} 
\newproof{Proof}{Proof of Theorem } 
\newcommand{\red}[1]{\textcolor{red}{#1}}
\journal{Journal of Multivariate Analysis}
\begin{document}

\begin{frontmatter}

\title{Multivariate geometric distributions, (logarithmically) monotone sequences, and infinitely divisible laws\\ \red{\large{(with erratum by Natalia Shenkman)}}}

\author{Jan-Frederik Mai}\ead{jan-frederik.mai@xaia.com} \author{Matthias Scherer}\ead{scherer@tum.de} \author{Natalia Shenkman\corref{cor1}}\ead{shenkman@tum.de}

\address{Department of Mathematics, Technische Universit{\"a}t M{\"u}nchen, Parkring 11, D-85748 Garching bei M{\"u}nchen, Germany}
\cortext[cor1]{\red{Natalia Shenkman is solely responsible for the corrections and improvements marked in red. This includes a drastically simplified proof, the removal of two nonsensical proofs and the addition of some relevant references. The original manuscript was published in the Journal of Multivariate Analysis (2013).}}

\begin{abstract}
Two stochastic representations of multivariate geometric distributions are ana\-lyzed, both are obtained by lifting the lack-of-memory (LM) property of the univariate geometric law to the multivariate case. On the one hand, the narrow-sense multivariate geometric law can be considered a discrete equivalent of the well-studied Marshall-Olkin exponential law. On the other hand, the more general wide-sense geometric law is shown to be characterized by the LM property and can differ significantly from its continuous counterpart, e.g., by allowing for negative pairwise correlations. 

For both families of distributions, their $d$-dimensional exchangeable subclass is characterized analytically via $d$-log-monotone, resp.\ $d$-monotone, sequences of parameters. Using this reparameterization, the subfamilies of distributions with conditionally i.i.d.\ components in the sense of de Finetti's theorem are determined. For these, a third stochastic construction based on a non-decreasing random walk is presented. The narrow-sense family is embedded in this construction when the increments of the involved random walk are infinitely divi\-sible. The exchangeable narrow-sense law is furthermore shown to exhibit the multivariate right tail increasing (MRTI) dependence.
\end{abstract}

\begin{keyword}

multivariate geometric law \sep lack-of-memory \sep exchangeability \sep completely monotone sequence \sep de Finetti's theorem \sep infinitely divisible law 
\MSC[2000] 62H05 \sep 62H20

\end{keyword}

\end{frontmatter}

\section{Introduction}
\label{sec:intro}

The univariate geometric (resp.\ exponential) law is well known to be the only discrete (resp.\ continuous) distribution obeying the lack-of-memory (LM) property. Multivariate extensions have been studied in both cases. In the exponential case, an analytical derivation based on the multivariate continuous local LM property and probabilistic derivations based on exogenous exponential shock models are provided in \cite{marshall67}. All derivations are shown to lead to the same distribution. %An alternative stochastic construction of the Marshall-Olkin multivariate exponential distribution based on the multivariate geometric compounding is given in \cite{arnold75}. 
In the geometric case, multivariate extensions obtained as the waiting times for outcomes in a sequence of multinomial trials are introduced in \cite{arnold75}. In particular, \cite{arnold75} defines ``a general multivariate geometric distribution'' and presents a link to the Marshall-Olkin multivariate exponential distribution via multivariate geometric compounding of exponential random variables. %Analytical characterization of the presented distribution is provided in terms of its generating function. 
Whereas \cite{arnold75} deals with probabilistic constructions of the multivariate geometric law, \cite{marshall95} and \cite{nair97} are concerned with analytical derivations based on the LM property. \cite{nair97} develop new classes of multivariate discrete survival functions adapted from monotonic behavior of a multivariate failure rate. In the process, they identify the families of multivariate geometric distributions with no-aging characteristic. Using the notation of this reference, survival functions belonging to both the $IFR_3$ and $DFR_3$ classes are precisely the survival functions satisfying the multivariate discrete local LM property. The results of \cite{marshall95} imply that, in contrast to the exponential case, the class of geometric distributions obtained using an exogenous fatal-shock model in the sense of \cite{marshall67}, where the exponential shocks are replaced by geometric ones, does not comprise all solutions to the functional equation corresponding to the multivariate discrete local LM property. Therefore, they subdivide the set of multivariate geometric distributions into $\mathcal{G^N}$ and $\mathcal{G^W}$, abbreviating ``narrow-sense'' $(\mathcal{N})$ and ``wide-sense'' $(\mathcal{W})$, respectively. The fact that the geometric law requires a finer treatment compared with the exponential law is emphasized in \cite{marshall95}: ``\textit{the structure of families of multivariate geometric distributions is more complicated and more interesting than is the structure of families of multivariate exponential distributions.}''
\par
In the recent literature, some attention has been drawn to the exchangeable and extendible subfamilies of multivariate distributions. In general, it is very difficult to verify analytically that a $d$-variate function $F$ is a probabi\-lity distribution (resp. survival probability) function. The major difficulty is to prove rectangular inequalities that guarantee positivity of the probability measure. The restriction to permutation invariant functions, corresponding to exchangeable probability distributions, can facilitate this task and lead to stunning mathematical links. For instance, if $F$ is parameterized by a univariate function $f$, i.e.\ $F=F(f)$, the rectangular inequalities for $F$ might translate into convenient analytic properties of $f$. These analytic properties might provide an illuminating loop back to probability theory and shed light on the probabilistic construction of $F$. In this spirit, \cite{kimberling74,malov01,morillas05,neil09} characterize Archimedean copulas via monotonicity properties of their generator function. The latter is shown to be a characterizing transform of a univariate probability law which appears in the construction of Archimedean copulas. Similar results are known for spherical and elliptical distributions, see \cite{fang90}, p.\ 26 ff., and \cite{kingman72,schoenberg38}. \cite{mai09,mai11,ressel11,ressel12} provide a link between Marshall-Olkin exponential distributions and completely monotone sequences. Relying on the results of \cite{gnedin08}, the references \cite{mai09,mai11} reveal astonishing interrelations with the theory of L\'{e}vy subordinators. A rigorous study of the exchangeable subsets of $\mathcal{G^N}$ and $\mathcal{G^W}$ discloses similar interesting interrelations with completely monotone sequences, random walks, and infinitely divisible laws. The present article contributes to the existing literature in the following way:
\begin{itemize}
\item \textbf{Stochastic models with limited memory:} It is shown that the family of general multivariate geometric distributions defined in \cite{arnold75} comprises precisely all solutions to the functional equation corresponding to the local discrete LM property and, thus, coincides with $\mathcal{G^W}$ (see Theorem \ref{Uniq}). Further, $\mathcal{G^N}$ forms a proper subset of $\mathcal{G^W}$. The survival functions for both families are computed in closed form (see Theorem \ref{thm_a}).
\item \textbf{Exchangeable subclasses}: The exchangeable subclasses are rewritten in terms of parameter sequences, truncated versions of which determine all lower-dimensional marginal distributions (see Remark \ref{repar}). The set of $d$-dimensional exchangeable wide-sense distributions ($\mathcal{G^{W,X}}$) is characterized by $d$-monotone sequences ($\mathcal{M}_{d}$), and the set of $d$-dimensional exchangeable narrow-sense distributions ($\mathcal{G^{N,X}}$) is characterized by the (newly introduced) stronger notion of $d$-log-monotone sequences ($\mathcal{LM}_{d}$) (see Theorem \ref{exch}). 
\item \textbf{Subclasses with conditionally i.i.d.\ components}: Starting from a $d$-variate exchangeable distribution, we investigate whether the distribution in question can be obtained by truncation of an infinite exchangeable sequence. This is solved analytically and corresponds to extendibility of the parameter sequence to a completely monotone sequence ($\mathcal{M}_{\infty}$), respectively to a completely log-monotone sequence ($\mathcal{LM}_{\infty}$) (see Corollary \ref{ext}). For both the extendible wide-sense ($\mathcal{G^{W,E}}$) and the extendible narrow-sense ($\mathcal{G^{N,E}}$) families, a stochastic model with components that are i.i.d.\ conditioned on a non-decreasing random walk is constructed (see Theorem \ref{thm_ciid} \red{(Esary and Marshall (1973))}). In the universe of this model, the $\mathcal{G^{N,E}}$-distribution arises as the proper subclass constructed by random walks with infinitely divisible increments. A natural link between $\mathcal{LM}_{\infty}$ and infinitely divisible distributions is established (see Proposition~\ref{extend} \red{(Gnedin and Pitman (2008))}). Interconnections with the results of \cite{gnedin04,gnedin05} on random integer partitions are highlighted.   
\item \textbf{Dependence properties}: Pairwise correlations are computed for both the wide- and narrow-sense geometric distribution (see Lemma \ref{l:corr}). Interestingly, the wide-sense geometric distribution allows for negative correlations, whereas the narrow-sense geometric distribution can only model non-negative correlations. The exchangeable narrow-sense as well as the extendible wide-sense distributions are shown to exhibit the MRTI property (see Theorem \ref{mrti_exch} and Corollary \ref{mrti_ext}).
\item \textbf{Relations between subfamilies}: The following Venn diagram illustrates the relations that we identify between subfamilies of $\mathcal{G^{W,X}}$. 
\begin{figure}[!ht]
\begin{center}
\caption{Characterization of multivariate exchangeable geometric distributions.}
\begin{tikzpicture}
\path[fill=white,draw=black](0,0) rectangle (12,5);
\path[fill=white,draw=black](0.2,0.2) rectangle (11.8,4.3);
\draw(0.26,4.55) node[text=black,fill=white,anchor=base west]{\small{\text{$\mathcal{G^{W,X}}\;\hat{=}\;\mathcal{M}_{d}$} \emph{(allows for negative pairwise correlations)}}};
\draw(0.26,3.45) node[text=black,fill=white,anchor=base west]{\small{\text{$\mathcal{G^{W,X}}$ \emph{MRTI positive dependence}}}}; %\nabla^2\ln\beta_0\geq 0
\draw(0.55,1.349)  node[text=black,fill=white,anchor=base west]{\small{\emph{conditioned on a random walk}}};
\draw(1.29,1.8)  node[text=black,fill=white,anchor=base west]{\small{\emph{i.i.d. representation}}};
\draw(7.3,3.45) node[text=black,fill=white,anchor=base west]{\small{\text{$\mathcal{G^{N,X}}\;\hat{=}\;\mathcal{LM}_{d}$}}};
\draw(1.7,2.251) node[text=black,fill=white,anchor=base west]{\small{\text{$\mathcal{G^{W,E}}\;\hat{=}\;\mathcal{M}_{\infty}$}}};
\draw(5.9,1.349)  node[text=black,fill=white,anchor=base west]{\small{\emph{conditioned on a random walk}}};
\draw(6.7,1.8)  node[text=black,fill=white,anchor=base west]{\small{\emph{i.i.d. representation}}};
\draw(7.3,2.251) node[text=black,fill=white,anchor=base west]{\small{\text{$\mathcal{G^{N,E}}\;\hat{=}\;\mathcal{LM}_{\infty}$}}};
\draw(5.6,0.898)  node[text=black,fill=white,anchor=base west]{\small{\emph{with infinitely divisible increments}}};
\path[draw=black](5.6,0.6) rectangle (11.5,4);
\path[draw=black](0.43,0.4) rectangle (11.2,3);
\label{tab_intro}
\end{tikzpicture}
\end{center}
\end{figure}
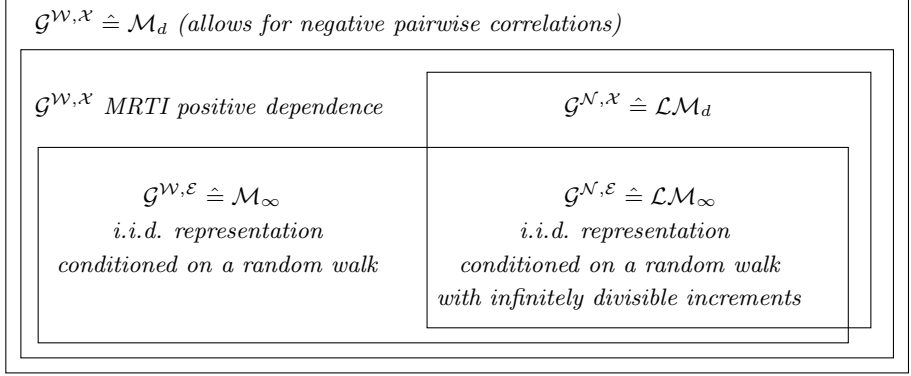
In dimension $2$, and only in dimension $2$, the inner sets agree, i.e.\ 
\begin{gather*}
\hspace{-0.2cm}\mathcal{G^{N,E}}\hspace{-0.15cm}=\mathcal{G^{N,X}}\hspace{-0.15cm}=\mathcal{G^{W,E}}\hspace{-0.15cm}=\mathcal{G^{W,X}_{\;\text{MRTI}}}=\mathcal{G^{W,X}}\text{with non-negative correlations},
\end{gather*}
and form a proper subset of $\mathcal{G^{W,X}}$. 
\end{itemize}

\section{Multivariate geometric distributions with limited memory}
\label{sec:mgd}

This section derives multivariate extensions of the univariate geometric distribution satisfying the local discrete LM property. To this end, we consider random variables $\tau$ taking values in $\IN:=\{1,2,\ldots\}$ and denote $\IN_0:=\{0\} \cup \IN$. The following definition is included for the sake of clarity.

\begin{definition}[Discrete survival function] \label{def_surv} For $d\in\IN$, the multi-indexed sequence $\{\bar{F}_{n_1,\ldots,n_d}\}_{n_1,\ldots,n_d \in \IN_0}$ is called a \emph{discrete survival function} if there exists a probability space $(\Omega,\F,\IP)$ supporting an $\IN^d$-valued random vector $(\tau_1,\ldots,\tau_d)$ such that
\begin{gather*}
\bar{F}_{n_1,\ldots,n_d}=\IP(\tau_1>n_1,\ldots,\tau_d>n_d),\quad n_1,\ldots,n_d \in \IN_0.
\end{gather*}

\end{definition}

A prominent example of a distribution on $\IN$ and the univariate starting point of the present investigation is the geometric law.

\begin{example}[Geometric distribution] \label{ex:geom}
Let $p\in[0,1)$. Considering a series of i.i.d.\ Bernoulli distributed random variables $Z_1,Z_2,\ldots$ with success probability $1-p$, i.e.\ $\IP(Z_1=1)=1-p=1-\IP(Z_1=0)$, we denote by $\tau:=\min\{n \in \IN\,:\,Z_n=1\}$ the first time of success. The distribution of $\tau$ is called \emph{geometric} and denoted by $Geo(1-p)$. Its corresponding survival function $\{\bar{F}_n\}_{n \in \IN_0}$ is given by $\bar{F}_n:=\IP(\tau>n)=p^{n}$, $n \in \IN_0$.
\end{example}
As already mentioned, the geometric distribution is the only distribution on $\IN$ satisfying the discrete LM property $\bar{F}_{n+m}=\bar{F}_n\,\bar{F}_m$, $n,m\in \IN_0.$ This renders the geometric distribution a discrete equivalent of the (continuous) exponential distribution. Moreover, it is easy to see that the geometric distribution is min-stable in the following sense: if $\tau_1 \sim Geo(1-p_1)$ and $\tau_2 \sim Geo(1-p_2)$ are independent, then $\min\{\tau_1,\tau_2\} \sim Geo(1-p_1\,p_2)$. This is another property shared with the exponential law.
Considering limiting cases, $\tau\sim Geo(1)$ implies $\tau=1$ almost surely. Further, a distribution with a point mass at infinity is conveniently interpreted as $Geo(0)$. 
\par
A first multivariate extension of the geometric law, preserving the local discrete LM property, is obtained using a discrete analogue of the Marshall--Olkin exponential shock model, see \cite{marshall67}, where the exponentially distributed shocks are replaced by geometric ones. The resulting distribution is referred to as a multivariate geometric distribution in the narrow sense ($\mathcal{G^N}$). In the continuous case, the Marshall--Olkin exponential shock model generates precisely the class of all multivariate exponential distributions satisfying the local continuous LM property. However, the family of multivariate geometric distributions satisfying the local LM property has a more complicated and richer structure. In particular, it contains distributions which can not be obtained using a discrete analogue of the Marshall--Olkin model. 
\par
As a second multivariate extension of the geometric law, we study a multivariate geometric distribution introduced in Section 4 of \cite{arnold75}. We prove that this class of distributions comprises all solutions to the functional equation corresponding to the local discrete multivariate LM property. Consequently, it forms a proper superset of the parametric family of narrow-sense geometric distributions. Adopting the terminology of \cite{marshall95}, geometric distributions obtained using the Arnold model will be called geometric in the wide sense ($\mathcal{G^W}$). 

\begin{definition}[Narrow- and wide-sense multivariate geometric law]\label{def}\text{}
\vspace{-0.7cm}
\begin{itemize}
\item[($\mathcal{N}$)] Let $p_I \in [0,1]$, $\emptyset \neq I \subseteq \{1,\ldots,d\}$, with $\prod_{I:k \in I}p_I<1$ for $k=1,\ldots,d$, be given parameters. Let $(\Omega,\F,\IP)$ be a probability space supporting a collection $\{E_I\}_{\emptyset \neq I \subseteq \{1,\ldots,d\}}$ of independent geometric random variables, where $E_I \sim Geo(1-p_I)$. Define the random vector $(\tau_1,\ldots,\tau_d)$ by
\begin{gather*}
\tau_k:=\min\{E_I\,:\,k\in I\},\quad k=1,\ldots,d.
\end{gather*}
The distribution of $(\tau_1,\ldots,\tau_d)$ is called \emph{multivariate narrow-sense geometric} with parameters $\textbf{p}:=\{p_I\}_{\emptyset \neq I \subseteq \{1,\ldots,d\}}$ and is denoted by $\mathcal{G^N(\textbf{p})}$. 
\item[($\mathcal{W}$)] Let $\tilde{p}_I \in [0,1]$, $I\subseteq \{1,\ldots,d\}$, with $\sum_{I}\tilde{p}_{I}=1$ and $\sum_{I:k \notin I}\tilde{p}_{I}<1$ for $k=1,\ldots,d$, be given parameters. Consider an experiment with $2^d$ possible outcomes $I\subseteq \{1,\ldots,d\}$, each of which occurs with probability $\tilde{p}_I$. The experiment is run repeatedly in independent and identically distributed trials. Further, let $\{\widetilde{E}_I\}_{I \subseteq \{1,\ldots,d\}}$ be a collection of random variables, where $\widetilde{E}_I$ represents the number of the experiment which first yielded the outcome $I$. Define the random vector $(\tau_1,\ldots,\tau_d)$ by
\begin{gather*}
\tau_k:=\min\{\widetilde{E}_I\,:\,k\in I\},\quad k=1,\ldots,d.
\end{gather*}
The distribution of $(\tau_1,\ldots,\tau_d)$ is called \emph{multivariate wide-sense geometric} with parameters $\tilde{\textbf{p}}:=\{\tilde{p}_I\}_{I \subseteq \{1,\ldots,d\}}$ and is denoted by $\mathcal{G^W(\tilde{\textbf{p}})}$.     
\end{itemize}
\end{definition}

\begin{remark} [Comparing the stochastic models]\text{}
\vspace{-0.2cm}
\begin{enumerate}
\item [($\mathcal{N}$)] In the case of a narrow-sense geometric distribution, each component $\tau_k$, $k=1,\ldots,d$, is defined as a minimum over $2^{d-1}$ independent geometric random variables indexed by the subsets $\emptyset \neq I \subseteq \{1,\ldots,d\}$ with $k\in I$. By min-stability of the geometric law, $\tau_k$ is $Geo(1-\prod_{I:k \in I}p_I)$-distributed. The condition $\prod_{I:k \in I}p_I<1$ ensures that $\tau_k$ is almost surely finite. The case where at least one $\tau_k$ is almost surely $1$ can be excluded by requiring $p_I\neq0$ for all $I$. %For homogeneous $\tau_k$, $k=1,\ldots,d$, this case leads to a degenerate distribution with point mass at $\{1\}^d$.
The dependence between the components results from the overlapping subset indices in the definition of the $\tau_k$'s. 
\item [($\mathcal{W}$)] In the case of a wide-sense geometric distribution, the shocks are geo\-metric $\widetilde{E}_I\sim Geo(\tilde{p}_I)$, $I \subseteq \{1,\ldots,d\}$, and $\tau_k\sim Geo(1-\sum_{I:\;k\notin I}\tilde{p}_I)$. Again, each $\tau_k$ is almost surely finite due to $\sum_{I:k \notin I}\tilde{p}_{I}<1$ for $k=1,\ldots,d$, and $\tau_k=1$ almost surely if $\sum_{I:k \notin I}\tilde{p}_{I}=0$. The crucial difference between the two probabilistic models is that the geometric shocks $E_I$ in the definition of the narrow-sense geometric distribution are \emph{independent}, whereas the construction of a wide-sense multivariate geometric distribution is based on \emph{dependent} geometric random variables $\widetilde{E}_I$. Due to $\sum_{I}\tilde{p}_{I}=1$ in the case of a wide-sense geometric distribution, both distributions $\mathcal{G^N}$ and $\mathcal{G^W}$ are parameterized by $2^d-1$ parameters. 
\end{enumerate}
\end{remark}

Theorems \ref{thm_a} and \ref{Uniq} characterize both distributions in terms of the survival function and the local LM property. In the following, $n_{(1)}\leq n_{(2)}\leq\ldots\leq n_{(d)}$ denotes the ordered list of $n_{1},\ldots,n_{d}\in\IN_0$. 

\begin{theorem}[Survival function and limited memory] \label{thm_a}\text{}
\vspace{-0.2cm}
\begin{itemize}
\item[($\mathcal{N}$)] Let $(\tau_1,\ldots,\tau_d)\sim\mathcal{G^N(\textbf{p})}$. Then, $(\tau_1,\ldots,\tau_d)$ has support $\IN^d$ with discrete survival function $\{\bar{F}^{\mathcal{N}}_{n_1,\ldots,n_d}\}_{n_1,\ldots,n_d \in \IN_0}$, given by
\begin{gather}
\bar{F}^{\mathcal{N}}_{n_1,\ldots,n_d}:=\IP(\tau_1>n_1,\ldots,\tau_d>n_d)=\prod_{\emptyset \neq I \subseteq \{1,\ldots,d\}} p_{I}^{\max_{i \in I}\{n_i\}}.
\label{Geo_surv}
\end{gather}
\item[($\mathcal{W}$)] Let $(\tau_1,\ldots,\tau_d)\sim\mathcal{G^W(\tilde{\textbf{p}})}$. Then, $(\tau_1,\ldots,\tau_d)$ has support $\IN^d$ with discrete survival function $\{\bar{F}^{\mathcal{W}}_{n_1,\ldots,n_d}\}_{n_1,\ldots,n_d \in \IN_0}$, given by
\begin{gather}
\bar{F}^{\mathcal{W}}_{n_1,\ldots,n_d}\hspace{-0.05cm}:=\IP(\tau_1\hspace{-0.05cm}>\hspace{-0.05cm}n_1,\ldots,\tau_d\hspace{-0.05cm}>\hspace{-0.05cm}n_d)=\prod_{k=1}^d\Big(\hspace{-0.25cm}\sum_{\stackrel{I\subseteq \{1,\ldots,d\}}{\pi_{\textbf{n}}(i)\notin I\;\forall\,i=k,\ldots,d}}\hspace{-0.25cm} \tilde{p}_{I}\Big)^{n_{(k)}-n_{(k-1)}},
\label{Geo_surv_w}
\end{gather}
with the convention $n_{(0)}:=0$, and $\pi_{\textbf{\textit{n}}}$: $\{1,\ldots,d\}\rightarrow\{1,\ldots,d\}$ being a permutation depending on $\textbf{\textit{n}}=(n_1,\ldots,n_d)$ such that $n_{\pi_\textbf{n}(1)}\leq n_{\pi_\textbf{n}(2)}\leq\ldots\leq n_{\pi_\textbf{n}(d)}$.
\end{itemize}
Moreover, both distributions (\ref{Geo_surv}) and (\ref{Geo_surv_w}) satisfy the \emph{local discrete multivariate LM property} 
\hspace{-0.2cm}$$\IP(\tau_{i_1}\hspace{-0.1cm}>\hspace{-0.1cm}n_{i_1}+m,\ldots,\tau_{i_k}\hspace{-0.1cm}>\hspace{-0.1cm}n_{i_k}+m\,|\,\tau_{i_1}\hspace{-0.1cm}>\hspace{-0.1cm}m,\ldots,\tau_{i_k}\hspace{-0.1cm}>\hspace{-0.1cm}m)=\IP(\tau_{i_1}\hspace{-0.1cm}>\hspace{-0.1cm}n_{i_1},\ldots,\tau_{i_k}\hspace{-0.1cm}>\hspace{-0.1cm}n_{i_k}),$$ 
\vspace{-1cm}
\begin{gather}
\mbox{ for }k =1,\ldots,d,\,1\leq i_1,\ldots,i_k \leq d,\,m,n_{i_1},\ldots,n_{i_k} \in \IN_0.
\label{LM}
\end{gather}
\end{theorem}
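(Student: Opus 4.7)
The plan is to derive both survival functions directly from the stochastic constructions in Definition~\ref{def} and then to deduce the local LM property~(\ref{LM}) from the memoryless structure of the underlying shock models. For part~($\mathcal{N}$), I note that $\bigcap_{k=1}^d\{\tau_k>n_k\}=\bigcap_{k=1}^d\bigcap_{I\ni k}\{E_I>n_k\}=\bigcap_{\emptyset\neq I\subseteq\{1,\ldots,d\}}\{E_I>\max_{i\in I}n_i\}$, which is a mere reshuffling of intersections. Since the shocks $E_I\sim Geo(1-p_I)$ are independent with $\IP(E_I>m)=p_I^m$, factorisation immediately yields~(\ref{Geo_surv}).

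For part~($\mathcal{W}$) the argument requires more care. I would first reduce to the ordered case $n_1\leq\cdots\leq n_d$ via the permutation $\pi_{\textbf{n}}$ and partition the trial indices $\{1,\ldots,n_{(d)}\}$ into the $d$ disjoint time windows $W_k:=\{n_{(k-1)}+1,\ldots,n_{(k)}\}$, $k=1,\ldots,d$. The key observation is that a trial $j\in W_k$ contributes to the event $\bigcap_i\{\tau_{\pi_{\textbf{n}}(i)}>n_{\pi_{\textbf{n}}(i)}\}$ iff its outcome $I$ avoids the index set $\{\pi_{\textbf{n}}(k),\ldots,\pi_{\textbf{n}}(d)\}$; indeed, for $i<k$ one has $j>n_{(k-1)}\geq n_{\pi_{\textbf{n}}(i)}$, so those constraints are vacuous. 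Each such trial lies in the admissible family with probability $\sum_{I:\pi_{\textbf{n}}(i)\notin I\,\forall i\geq k}\tilde p_I$; independence across the $n_{(k)}-n_{(k-1)}$ trials inside $W_k$ and across the $d$ windows then yields~(\ref{Geo_surv_w}).

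For the local LM property~(\ref{LM}), the cleanest route is probabilistic. In the $\mathcal{N}$ case, $\{\tau_{i_j}>m\;\forall j\}=\bigcap_{I:I\cap\{i_1,\ldots,i_k\}\neq\emptyset}\{E_I>m\}$; univariate LM of each relevant $Geo(1-p_I)$ combined with independence implies that the shifted shocks $(E_I-m)_{I\cap\{i_1,\ldots,i_k\}\neq\emptyset}$ again form an independent family with the original geometric marginals, so that $(\tau_{i_1}-m,\ldots,\tau_{i_k}-m)$ has, conditionally, the same law as the unconditional marginal $(\tau_{i_1},\ldots,\tau_{i_k})$. In the $\mathcal{W}$ case, the conditioning event constrains only trials $1,\ldots,m$; by i.i.d.\ of the trials, the sequence $(I_{m+1},I_{m+2},\ldots)$ remains a fresh copy of the original experiment, and the same conclusion follows. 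A direct algebraic check (using $\max_{i\in I}(n_i+m)=m+\max_{i\in I}n_i$ for~(\ref{Geo_surv}) and a window-by-window comparison for~(\ref{Geo_surv_w})) is available as a sanity check. The main obstacle throughout is the bookkeeping in the wide-sense case: correctly matching each window $W_k$ with the admissible family $\{I:\pi_{\textbf{n}}(i)\notin I\;\forall i\geq k\}$; once that correspondence is in place, every remaining step reduces to independence and the univariate identity $\IP(Geo(1-q)>m)=q^m$.
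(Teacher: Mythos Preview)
Your proposal is correct and mirrors the paper's proof: the event rewriting $\bigcap_k\{\tau_k>n_k\}=\bigcap_{I}\{E_I>\max_{i\in I}n_i\}$ for ($\mathcal{N}$) and the trial-window decomposition for ($\mathcal{W}$) are exactly what the paper does. The only difference is that for the LM property the paper carries out your ``algebraic sanity check'' (padding the unused indices by zero via $\tilde n_j$ and verifying $\bar F_{\tilde n_1+m,\ldots,\tilde n_d+m}=\bar F_{m,\ldots,m}\,\bar F_{\tilde n_1,\ldots,\tilde n_d}$ directly from the formulas) rather than the probabilistic regeneration argument you lead with; both routes are immediate.
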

\begin{proof} Refer to \ref{sec:appa}.\qed
\end{proof}
  
\begin{theorem}[Uniqueness] \label{Uniq}
A distribution on $\IN^d$ satisfies the local discrete LM property (\ref{LM}) if and only if it is a $d$-variate wide-sense geometric distribution.
\end{theorem}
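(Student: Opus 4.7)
The ``if'' direction follows directly from Theorem \ref{thm_a}, so the plan focuses on the converse. My goal is to show that any $\bar{F}$ on $\IN^d$ satisfying (\ref{LM}) coincides with formula (\ref{Geo_surv_w}) for some admissible parameter vector $\tilde{\textbf{p}}$. The strategy has two parts: first, use LM to derive a product representation of $\bar{F}$ in terms of scalars $q_J\in[0,1]$ indexed by non-empty subsets $J\subseteq\{1,\ldots,d\}$; second, recover a valid $\tilde{\textbf{p}}$ by inspecting the joint law of $(\tau_1,\ldots,\tau_d)$ at the point $(1,\ldots,1)$.

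For the first step, I would apply (\ref{LM}) with index set $\{i_1,\ldots,i_k\}=J$ and common shifts $n_{i_j}\equiv n$, which yields the Cauchy-type relation
\begin{gather*}
f_J(n+m)=f_J(n)\,f_J(m),\qquad f_J(n):=\IP(\tau_j>n\;\forall\,j\in J),
\end{gather*}
so that $f_J(n)=q_J^n$ with $q_J:=f_J(1)\in[0,1]$; almost-sure finiteness of each $\tau_k$ forces $q_{\{k\}}<1$. For a general $\textbf{\textit{n}}=(n_1,\ldots,n_d)$ with sorting permutation $\pi=\pi_\textbf{\textit{n}}$, I would then apply (\ref{LM}) with the full index set and shift $m=n_{(1)}$, obtaining
\begin{gather*}
\bar{F}_{n_1,\ldots,n_d}=q_{\{1,\ldots,d\}}^{n_{(1)}}\cdot\IP\big(\tau_k>n_k-n_{(1)}\;\forall\,k=1,\ldots,d\big).
\end{gather*}
Since $n_{\pi(1)}-n_{(1)}=0$, the second factor depends only on the $(d-1)$-variate marginal on $\{1,\ldots,d\}\setminus\{\pi(1)\}$, which itself inherits (\ref{LM}). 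Induction on $d$ (the univariate case being standard) then telescopes into
\begin{gather*}
\bar{F}_{n_1,\ldots,n_d}=\prod_{k=1}^d q_{\{\pi(k),\ldots,\pi(d)\}}^{n_{(k)}-n_{(k-1)}}.
\end{gather*}

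For the second step, I need to realize each $q_J$ in the form $q_J=\sum_{I:\,I\cap J=\emptyset}\tilde{p}_I$ for an admissible probability vector $\tilde{\textbf{p}}$. The clean reconstruction is to set
\begin{gather*}
\tilde{p}_I:=\IP(\tau_k=1\;\forall\,k\in I,\;\tau_k>1\;\forall\,k\in\{1,\ldots,d\}\setminus I),\qquad I\subseteq\{1,\ldots,d\},
\end{gather*}
namely the joint pmf of the random subset $\{k:\tau_k=1\}$. Non-negativity and normalization are automatic, and summing over $I$ disjoint from $J$ recovers exactly $\IP(\tau_k>1\;\forall\,k\in J)=q_J$ by partitioning $\Omega$ according to which coordinates die at time one. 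Substituting these $\tilde{p}_I$'s into (\ref{Geo_surv_w}) then reproduces the product form derived above, so $\bar{F}$ coincides with the wide-sense survival function associated to $\tilde{\textbf{p}}$; the admissibility constraint $\sum_{I:\,k\notin I}\tilde{p}_I=q_{\{k\}}<1$ was already verified. The principal obstacle is the combinatorial bookkeeping in the telescoping argument, where the sorting permutation $\pi_\textbf{\textit{n}}$ must be tracked carefully as one marginalizes over $\pi(1)$; the $\tilde{\textbf{p}}$-reconstruction in the final step is then essentially a one-line verification.
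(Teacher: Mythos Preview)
Your proposal is correct and follows essentially the same approach as the paper: the paper defines exactly the same $\tilde{p}_I:=\IP(\tau_k=1\;\forall k\in I,\;\tau_k>1\;\forall k\notin I)$, verifies the same admissibility conditions, and derives the product form by repeatedly applying (\ref{LM}) to peel off factors $q_{\{\pi(k),\ldots,\pi(d)\}}^{n_{(k)}-n_{(k-1)}}$. The only cosmetic difference is that the paper writes out the telescoping in one chain on the full vector (using LM directly on the shrinking index sets $\{\pi(k),\ldots,\pi(d)\}$) rather than framing it as induction on $d$ via marginals; since (\ref{LM}) is stated for all sub-index-sets, both phrasings are equivalent.
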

\begin{proof} Refer to \ref{sec:appa}.\qed
\end{proof}

The inclusion $\mathcal{G^N}\subsetneq\mathcal{G^W}$ is immediate from Theorems \ref{thm_a} and \ref{Uniq}, and proper in any dimension $d\geq2$, see Example \ref{nw1}. Regarding a probabilistic meaning of the parameters $\tilde{\textbf{p}}=\{\tilde{p}_I\}_{I \subseteq \{1,\ldots,d\}}$ of $(\tau_1,\ldots,\tau_d)\sim\mathcal{G^W(\tilde{\textbf{p}})}$, we have
\begin{gather*}
\tilde{p}_I=\IP(\widetilde{E}_I=1)=\IP\big(\{\tau_i>1\;\forall i\notin I\}\cap\{\tau_i=1\;\forall i\in I\}\big),\quad I\subseteq \{1,\ldots,d\}.
\end{gather*}

\begin{example}[Three-variate geometric law]\label{mgd}\text{}
\vspace{-0.2cm}
\begin{itemize}
\item[($\mathcal{N}$)] Let $(\tau_1,\tau_2,\tau_3)\sim\mathcal{G^N}(\{p_I\}_{\emptyset \neq I \subseteq \{1,2,3\}})$. Then, its corresponding survival function $\{\bar{F}^{\mathcal{N}}_{n_1,n_2,n_3}\}_{n_1,n_2,n_3 \in \IN_0}$ is given by
\begin{gather}
\bar{F}^{\mathcal{N}}_{n_1,n_2,n_3}=p_{\{1\}}^{n_1}p_{\{2\}}^{n_2}p_{\{3\}}^{n_3}p_{\{1,2\}}^{\max\{n_1,n_2\}}p_{\{1,3\}}^{\max\{n_1,n_3\}}p_{\{2,3\}}^{\max\{n_2,n_3\}}p_{\{1,2,3\}}^{n_{(3)}}.
\label{Geo_three_n}
\end{gather}
\item[($\mathcal{W}$)] Let $(\tau_1,\tau_2,\tau_3)\sim\mathcal{G^W}(\{\tilde{p}_I\}_{I \subseteq \{1,2,3\}})$. Then, its corresponding survival function $\{\bar{F}^{\mathcal{W}}_{n_1,n_2,n_3}\}_{n_1,n_2,n_3 \in \IN_0}$ is given by 
\begin{gather}
\hspace{-0.85cm}\bar{F}^{\mathcal{W}}_{n_1,n_2,n_3}\hspace{-0.05cm}\hspace{-0.15cm}=\hspace{-0.1cm}\left\{\begin{array}{llllll}
     \hspace{-0.15cm}\tilde{p}_{\emptyset}^{n_1}(\tilde{p}_{\emptyset} \hspace{-0.1cm}+  \hspace{-0.1cm}\tilde{p}_{\{1\}} \hspace{-0.05cm})^{n_2-n_1}(\tilde{p}_{\emptyset}\hspace{-0.1cm}+\hspace{-0.1cm}\tilde{p}_{\{1\}}\hspace{-0.1cm}+\hspace{-0.1cm}\tilde{p}_{\{2\}}\hspace{-0.1cm}+\hspace{-0.1cm}\tilde{p}_{\{1,2\}}\hspace{-0.05cm})^{n_3-n_2},    &\mbox\; \hspace{-0.1cm}n_1\hspace{-0.1cm}\leq \hspace{-0.1cm}n_2 \hspace{-0.1cm}\leq \hspace{-0.1cm}n_3,\\ \hspace{-0.15cm}\tilde{p}_{\emptyset}^{n_1}(\tilde{p}_{\emptyset}\hspace{-0.1cm}+\hspace{-0.1cm}\tilde{p}_{\{1\}}\hspace{-0.05cm})^{n_3-n_1}(\tilde{p}_{\emptyset}\hspace{-0.1cm}+\hspace{-0.1cm}\tilde{p}_{\{1\}}\hspace{-0.1cm}+\hspace{-0.1cm}\tilde{p}_{\{3\}}\hspace{-0.1cm}+\hspace{-0.1cm}\tilde{p}_{\{1,3\}}\hspace{-0.05cm})^{n_2-n_3},    &\mbox\; \hspace{-0.1cm}n_1\hspace{-0.1cm}\leq \hspace{-0.1cm}n_3 \hspace{-0.1cm}< \hspace{-0.1cm}n_2,\\
     \hspace{-0.15cm}\tilde{p}_{\emptyset}^{n_2}(\tilde{p}_{\emptyset}\hspace{-0.1cm}+ \hspace{-0.1cm}\tilde{p}_{\{2\}}\hspace{-0.05cm})^{n_1-n_2}(\tilde{p}_{\emptyset}\hspace{-0.1cm}+\hspace{-0.1cm}\tilde{p}_{\{1\}}\hspace{-0.1cm}+\hspace{-0.1cm}\tilde{p}_{\{2\}}\hspace{-0.1cm}+\hspace{-0.1cm}\tilde{p}_{\{1,2\}}\hspace{-0.05cm})^{n_3-n_1},    &\mbox\; \hspace{-0.1cm}n_2\hspace{-0.1cm}< \hspace{-0.1cm}n_1 \hspace{-0.1cm}\leq \hspace{-0.1cm}n_3,\\
     \hspace{-0.15cm}\tilde{p}_{\emptyset}^{n_2}(\tilde{p}_{\emptyset}\hspace{-0.1cm}+\hspace{-0.1cm} \tilde{p}_{\{2\}}\hspace{-0.05cm})^{n_3-n_2}(\tilde{p}_{\emptyset}\hspace{-0.1cm}+\hspace{-0.1cm}\tilde{p}_{\{2\}}\hspace{-0.1cm}+\hspace{-0.1cm}\tilde{p}_{\{3\}}\hspace{-0.1cm}+\hspace{-0.1cm}\tilde{p}_{\{2,3\}}\hspace{-0.05cm})^{n_1-n_3},    &\mbox\; \hspace{-0.1cm}n_2\hspace{-0.1cm}\leq \hspace{-0.1cm}n_3 \hspace{-0.1cm}< \hspace{-0.1cm}n_1,\\
     \hspace{-0.15cm}\tilde{p}_{\emptyset}^{n_3}(\tilde{p}_{\emptyset}\hspace{-0.1cm}+ \hspace{-0.1cm}\tilde{p}_{\{3\}}\hspace{-0.05cm})^{n_1-n_3}(\tilde{p}_{\emptyset}\hspace{-0.1cm}+\hspace{-0.1cm}\tilde{p}_{\{1\}}\hspace{-0.1cm}+\hspace{-0.1cm}\tilde{p}_{\{3\}}\hspace{-0.1cm}+\hspace{-0.1cm}\tilde{p}_{\{1,3\}}\hspace{-0.05cm})^{n_2-n_1},    &\mbox\; \hspace{-0.1cm}n_3\hspace{-0.1cm}<\hspace{-0.1cm} n_1 \hspace{-0.1cm}\leq \hspace{-0.1cm}n_2,\\
     \hspace{-0.15cm}\tilde{p}_{\emptyset}^{n_3}(\tilde{p}_{\emptyset}\hspace{-0.1cm}+\hspace{-0.1cm} \tilde{p}_{\{3\}}\hspace{-0.05cm})^{n_2-n_3}(\tilde{p}_{\emptyset}\hspace{-0.1cm}+\hspace{-0.1cm}\tilde{p}_{\{2\}}\hspace{-0.1cm}+\hspace{-0.1cm}\tilde{p}_{\{3\}}\hspace{-0.1cm}+\hspace{-0.1cm}\tilde{p}_{\{2,3\}}\hspace{-0.05cm})^{n_1-n_2},    &\mbox\; \hspace{-0.1cm}n_3\hspace{-0.1cm}< \hspace{-0.1cm}n_2\hspace{-0.1cm} < \hspace{-0.1cm}n_1.\\
\end{array}\right.
\label{Geo_three_w}
\end{gather}
\end{itemize}
An elementary example of a three-variate narrow-sense geometric distribution is $(\tau_1,\tau_2,\tau_3)$ with mutually independent $\tau_i\sim Geo(1-p_i)$, $p_i\in[0,1)$, $i=1,2,3$, i.e. 
\begin{gather}
\bar{F}_{n_1,n_2,n_3}=p_1^{n_1}p_2^{n_2}p_3^{n_3}.
\label{Geo_ind}
\end{gather}
The survival function in (\ref{Geo_ind}) can be represented as in (\ref{Geo_three_n}) with parameters of a narrow-sense distribution $p_{\{i\}}=p_i$, $i=1,2,3$, and $p_I=0$ for $|I|>1$ . (\ref{Geo_ind}) can also be rewritten to resemble the survival function of a wide-sense distribution (\ref{Geo_three_w}) by setting $\tilde{p}_I=\sum_{i=0}^{|I|}(-1)^{|I|+i}\sum_{\stackrel{J\subseteq I}{|J|=i}}\prod_{\stackrel{j\in\{1,2,3\}}{j\notin J}}p_j$, $I \subseteq \{1,2,3\}$. %It is interesting to note that a multivariate distribution with mutually independent geometric marginals is the only discrete distribution satisfying a stronger version of (\ref{LM}) -- the so-called strong LM property, see \cite{galambos78}, p.\ 103 ff.   
\end{example}

For a discussion of the bivariate wide-sense geometric distribution, we refer the reader to \cite{azlarov83}.
      
Studying the exchangeable subfamilies of $\mathcal{G^N}$ and $\mathcal{G^W}$ reveals astonishing interrelations with completely monotone sequences and infinitely divisible laws. In particular, a convenient analytical characterization can be achieved for exchangeable geometric survival functions. Furthermore, exchangeable distributions might be of interest for practical reasons: in the non-exchangeable case, the number of parameters, namely $2^d-1$ in dimension $d$, grows exponentially in $d$. The restriction to permutation invariant distributions allows to significantly reduce the number of parameters to $d$ in dimension $d$.

\section{Exchangeable subfamilies}
\label{sec:exch}

This section provides an analytical characterization of exchangeable narrow- and wide-sense geometric distributions, i.e.\ distributions invariant under all permutations of their components. An exchangeability criterion for the distributions in (\ref{Geo_surv}) and (\ref{Geo_surv_w}) is given in Lemma \ref{lemma_exch_case}.  

\begin{lemma}[Exchangeability criterion] \label{lemma_exch_case}
The $d$-dimensional survival functions (\ref{Geo_surv}) and (\ref{Geo_surv_w}) are exchangeable if and only if their parameters satisfy the following condition:
\begin{gather}
|I_1|=|I_2|\; \Rightarrow \; p_{I_1}=p_{I_2},\text{ resp. } |I_1|=|I_2|\; \Rightarrow \; \tilde{p}_{I_1}=\tilde{p}_{I_2},
\label{exch_condi} 
\end{gather}
where $|I|$ denotes the cardinality of a set $I \subseteq \{1,\ldots,d\}$.
\end{lemma}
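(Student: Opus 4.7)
My plan is to prove sufficiency and necessity separately, in each case relying on the explicit closed-form survival functions of Theorem~\ref{thm_a}.

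For sufficiency, fix an arbitrary permutation $\sigma$ of $\{1,\ldots,d\}$. In the narrow-sense case, substituting $n_{\sigma(i)}$ for $n_i$ in (\ref{Geo_surv}) and using $\max_{i\in I}\{n_{\sigma(i)}\}=\max_{j\in\sigma(I)}\{n_j\}$, the product reindexes to $\prod_{J}p_{\sigma^{-1}(J)}^{\max_{j\in J}\{n_j\}}$; if $p_I$ depends only on $|I|$, then $p_{\sigma^{-1}(J)}=p_J$ for every $J$, so $\bar{F}^{\mathcal{N}}$ is invariant under $\sigma$. In the wide-sense case, if $\tilde{p}_I$ depends only on $|I|$, the inner sum appearing in (\ref{Geo_surv_w}) rewrites as $\sum_{I\subseteq\{\pi_{\mathbf{n}}(1),\ldots,\pi_{\mathbf{n}}(k-1)\}}\tilde{p}_I$ and thus depends only on $k-1$; the whole expression therefore depends on $(n_1,\ldots,n_d)$ only through the order statistics $\{n_{(k)}-n_{(k-1)}\}_{k=1}^d$, which are symmetric in $(n_1,\ldots,n_d)$, giving exchangeability.

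For necessity in the wide-sense family, I use the probabilistic identification noted immediately after Theorem~\ref{Uniq}, namely $\tilde{p}_I=\IP\big(\{\tau_i>1\,\forall i\notin I\}\cap\{\tau_i=1\,\forall i\in I\}\big)$. Exchangeability of $(\tau_1,\ldots,\tau_d)$ means $(\tau_{\sigma(1)},\ldots,\tau_{\sigma(d)})$ is equidistributed with $(\tau_1,\ldots,\tau_d)$ for every permutation $\sigma$; pushing through the change of indices $j=\sigma(i)$ transforms the event defining $\tilde{p}_I$ into the one defining $\tilde{p}_{\sigma(I)}$, yielding $\tilde{p}_I=\tilde{p}_{\sigma(I)}$, which is the claimed condition.

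For necessity in the narrow-sense family, I evaluate (\ref{Geo_surv}) at the indicator vector $n_i=\mathbf{1}_{\{i\in J\}}$ for nonempty $J\subseteq\{1,\ldots,d\}$, obtaining $\bar{F}^{\mathcal{N}}_{\mathbf{1}_J}=\prod_{I:\,I\cap J\neq\emptyset}p_I$; exchangeability forces this product to depend only on $|J|$. Equivalently, the partial products $\prod_{I\subseteq B,\,I\neq\emptyset}p_I$ are functions of $|B|$ alone for $B\subsetneq\{1,\ldots,d\}$. Restricting first to the non-degenerate case $p_I\in(0,1]$, taking logarithms turns this into the statement that $h(B):=\sum_{I\subseteq B,\,I\neq\emptyset}\log p_I$ depends only on $|B|$; Möbius inversion on the Boolean lattice then gives $\log p_I=\sum_{B\subseteq I}(-1)^{|I|-|B|}h(B)$, a sum that depends only on $|I|$. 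For the degenerate case where some $p_I$ vanish, exchangeability forces the vanishing locus of $\bar{F}^{\mathcal{N}}$ in $\IN_0^d$ to be symmetric, from which one deduces that $\{I:p_I=0\}$ is itself closed under permutations; the Möbius reduction applied to the remaining positive parameters then completes the argument. The most delicate point is precisely this case distinction between strictly positive and degenerate parameters.
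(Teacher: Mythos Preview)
Your sufficiency arguments and your wide-sense necessity argument are correct. The latter, via the probabilistic identification $\tilde{p}_I=\IP(\tau_i>1\ \forall i\notin I,\ \tau_i=1\ \forall i\in I)$, is considerably slicker than the paper's induction on $|I|$: one line replaces an inductive cascade. Your narrow-sense necessity argument by M\"obius inversion is likewise correct in the non-degenerate case and is a clean alternative to the paper's induction-with-division; both ultimately rest on the fact that $\bar{F}^{\mathcal{N}}_{\ind_J}$ depends only on $|J|$, but your route packages the recovery of the individual $p_I$ in a single inversion instead of peeling off cardinalities one at a time.

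The genuine gap is in your degenerate-case patch. The claim that symmetry of the vanishing locus of $\bar{F}^{\mathcal{N}}$ forces $\{I:p_I=0\}$ to be permutation-closed is false. Take $d=2$ with $p_{\{1\}}=0$, $p_{\{2\}}=1/2$, $p_{\{1,2\}}=0$: then $\bar{F}^{\mathcal{N}}_{n_1,n_2}=\ind_{\{n_1=n_2=0\}}$ is trivially exchangeable, yet $\{I:p_I=0\}=\{\{1\},\{1,2\}\}$ is not closed under the transposition, and indeed $p_{\{1\}}\neq p_{\{2\}}$. This actually shows that the lemma as literally stated fails once some $p_I$ vanish: the map $\{p_I\}\mapsto\bar{F}^{\mathcal{N}}$ is no longer injective, and an exchangeable survival function can arise from non-symmetric parameters. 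The paper's own proof silently assumes non-degeneracy when it divides by $\bar{F}_{1,\ldots,1}=\prod_I p_I$. You should do the same---state the non-degeneracy hypothesis $p_I>0$ explicitly and drop the attempted extension---rather than argue for a conclusion that is not true.
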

\begin{proof} Refer to \ref{sec:appb}. \red{While the lemma is intuitive,  the proofs provided in the appendix convey the impression of a certain complexity. They become trivial if the correct parametrization of the survival functions is invoked; see  \cite{shenkman17} and the proof of Lemma $2.1$ in \cite{shenkman20}.} \qed 
\end{proof}

Lemma \ref{lemma_exch_case} implies that in the exchangeable case, both distributions (\ref{Geo_surv}) and (\ref{Geo_surv_w}) are parameterized by only $d$ parameters. Remark \ref{repar} carries out a reparametrization of exchangeable survival functions in terms of parameter sequences, truncated versions of which determine all lower-dimensional marginal distributions. Besides notational convenience, this reparametrization facilitates an analytical characterization of the corresponding survival function.  

\begin{remark}[Reparameterization] \label{repar}\text{}
\vspace{-0.2cm}
\begin{enumerate}
\item [($\mathcal{N}$)] For the exchangeable narrow-sense geometric distribution, the distinct parameters are $p_1,\ldots,p_d$, where $p_k:=p_{\{1,\ldots,k\}}$ for $k=1,\ldots,d$. These parameters satisfy $p_1,\ldots,p_d\in (0,1]$ and $\prod_{i=1}^{d}p_i<1$. A combinatorial exercise shows that in this case, the survival function (\ref{Geo_surv}) simplifies to
\begin{gather*}
\bar{F}^{\mathcal{N}}_{n_1,\ldots,n_d}=\prod_{k=1}^{d}\Big( \prod_{i=1}^{d-k+1}p_i^{\binom{d-k}{i-1}} \Big)^{n_{(d-k+1)}}.
\end{gather*}
Introducing the notation
\begin{gather}
a_k:=\prod_{i=1}^{d-k+1}p_i^{\binom{d-k}{i-1}},\quad k=1,\ldots,d,
\label{ak}
\end{gather}
we obtain a convenient expression for the $d$-dimensional exchangeable narrow-sense geometric survival function:
\begin{gather}
\bar{F}^{\mathcal{N}}_{n_1,\ldots,n_d}=\prod_{k=1}^{d}a_k^{n_{(d-k+1)}}.
\label{Geo_surv_new}
\end{gather}
For a comparison of the narrow- with the wide-sense geometric distributions, it is more convenient to parameterize the survival function (\ref{Geo_surv_new}) in terms of the parameters $\{b_k\}_{k=1}^d$, given by    
\begin{gather}
b_{k}:=\prod_{i=1}^{k}a_i,\quad k=1,\ldots,d.
\label{b's}
\end{gather}
Denoting $b_0:=1$ and $n_{(0)}:=0$, this gives $a_k=b_{k}/b_{k-1}$ for $k=1,\ldots,d$, and the survival function in (\ref{Geo_surv_new}) takes the form 
\begin{gather}
\bar{F}^{\mathcal{N}}_{n_1,\ldots,n_d}=\prod_{k=1}^{d}(b_k/b_{k-1})^{n_{(d-k+1)}}=\prod_{k=1}^{d}b_k^{n_{(d-k+1)}-n_{(d-k)}}.
\label{kurzi}
\end{gather}
\item [($\mathcal{W}$)] Analogously, the exchangeable wide-sense geometric distribution can be parame\-terized by $d$ parameters $\tilde{p}_1,\ldots,\tilde{p}_d$, where $\tilde{p}_1:=\tilde{p}_{\emptyset}$ and $\tilde{p}_k:=\tilde{p}_{\{1,\ldots,k-1\}}$ for $k=2,\ldots,d$. The parameter $\tilde{p}_{\{1,\ldots,d\}}$ is determined by $\tilde{p}_1,\ldots,\tilde{p}_d$ via the condition $\sum_I \tilde{p}_I=1$. The parameters $\tilde{p}_1,\ldots,\tilde{p}_d$ satisfy $\tilde{p}_1,\ldots,\tilde{p}_d\in [0,1]$, $\sum_{i=1}^{d}\binom{d}{i-1}\tilde{p}_i\leq1$, and $\sum_{i=1}^{d}\binom{d-1}{i-1}\tilde{p}_i<1$. Further, the exchangeable wide-sense geometric survival function (\ref{Geo_surv_w}) can be conveniently written as
\begin{gather}
\bar{F}^{\mathcal{W}}_{n_1,\ldots,n_d}=\prod_{k=1}^{d}\beta_k^{n_{(d-k+1)}-n_{(d-k)}},
\label{Geo_surv_new_w}
\end{gather}
where the new parameters $\{\beta_k\}_{k=1}^d$ are given by    
\begin{gather}
\beta_{k}:=\sum_{i=1}^{d-k+1}\binom{d-k}{i-1}\tilde{p}_i,\quad k=1,\ldots,d.
\label{beta's}
\end{gather}
\end{enumerate}
\end{remark}

There exists a fundamental connection between the parameter spaces of exchangeable multivariate narrow- and wide-sense geometric distributions and $d$-monotone sequences. On the one hand, the sequence of parameters $(1,\beta_1,\ldots,\beta_d)$, given as in (\ref{beta's}), of every exchangeable $d$-variate wide-sense geometric distribution is $(d+1)$-monotone. On the other hand, each $(d+1)$-monotone sequence defines admissible parameters of an exchangeable $d$-variate geometric distribution satisfying the local LM property. The related concept of complete monotonicity has been studied in the literature, see e.g.\ \cite{lorch83,gnedin08}. In the present article, we particularize $d$-monotonicity by introducing the stronger notion of $d$\textit{-log-monotonicity}. The meaning of $d$-log-monotonicity is twofold. First, it characterizes the parameters $\{b_k\}_{k=1}^d$, given as in (\ref{b's}), of an exchangeable $d$-variate narrow-sense geometric distribution. Second, as we shall see in Section \ref{sec:ext}, the complete log-monotonicity, i.e.\ $d$-log-monotonicity for all $d\in\IN_0$, uniquely characterizes the sequence of non-positive exponential moments of an \textit{infinitely divisible} probability law on $[0,\infty]$. This result constitutes an analogue to Hausdorff's moment problem, see \cite{hausdorff21,hausdorff23}, i.e.\ $\{\beta_k\}^{\infty}_{k=0}$ is completely monotone if and only if it is the sequence of moments of a probability measure on $[0,1]$. Moreover, it links the infinitely extendible subclass of exchangeable narrow-sense geometric distributions to infinitely divisible distributions on $[0,\infty]$.     
          
\begin{definition}[$d$-monotone and $d$-log-monotone sequences] \label{d-mon}
A finite sequence $(x_0,\ldots,x_{d-1})\in\IR^d$ is said to be $d$-monotone if it satisfies
\begin{gather}
\nabla^{j}x_k:=\sum^{j}_{i=0}(-1)^{i}\,\binom{j}{i}\,x_{k+i}\geq0,\quad k=0,1,\ldots,d-1, \; j=0,1,\ldots,d-k-1.
\label{d-mon1}
\end{gather}
An infinite sequence $\{x_k\}_{k\in\IN_0}$ is called completely monotone if $\nabla^{j}x_k\geq0$ for all $k\in\IN_0$, $j\in\IN_0$. 
We define the set $\mathcal{M}_d$ of $d$-monotone and $\mathcal{M}_{\infty}$ of completely monotone sequences, starting with $1$, by
\begin{align*}
\mathcal{M}_d&\ := \big\{\{x_k\}_{k=0}^{d-1}\in\IR^d\,|\,x_0=1,x_1\hspace{-0.1cm}<\hspace{-0.1cm}1,\{x_k\}_{k=0}^{d-1} \text{\;is d-monotone}\big\},\qquad\\
\mathcal{M}_{\infty}&\ :=\big\{\{x_k\}_{k\in\IN_0}\hspace{-0.1cm}\in\IR^{\IN_0}\,|\,x_0=1,x_1\hspace{-0.1cm}<\hspace{-0.1cm}1,\{x_k\}_{k\in\IN_0} \text{\;is completely monotone}\big\},\qquad
\end{align*}
respectively. Further, we define the set $\mathcal{LM}_d$ of $d$-log-monotone and $\mathcal{LM}_{\infty}$ of completely log-monotone sequences, starting with $1$, by
\begin{align*}
\mathcal{LM}_d&\ := \big\{\{x_k\}_{k=0}^{d-1}\hspace{-0.1cm}\in\hspace{-0.1cm}\IR_{>0}^d\,|\,x_0=1,x_1\hspace{-0.1cm}<\hspace{-0.1cm}1,\nabla^{d-k-1}\hspace{-0.1cm}\ln x_k\hspace{-0.1cm}\geq\hspace{-0.1cm}0,\,k\hspace{-0.1cm}=\hspace{-0.1cm}0,1,\ldots,d-2\big\},\qquad\;\;\\
\mathcal{LM}_{\infty}&\ :=\big\{\{x_k\}_{k\in\IN_0}\hspace{-0.1cm}\in\hspace{-0.1cm}\IR_{>0}^{\IN_0}\,|\,x_0=1,x_1<1,\nabla^{j}\ln x_k\geq0,\,k\in\IN_0,\,j\in\IN\big\},\qquad\;\;
\end{align*}
respectively.
Lastly, we define the set $\mathcal{SM}_d$ of $d$-strong-monotone and $\mathcal{SM}_{\infty}$ of completely strong-monotone sequences by
\begin{align*}
\mathcal{SM}_d&\ := \big\{\{x_k\}_{k=0}^{d-1}\in\IR_{>0}^d\,|\,x_0<1,\{\ln x_k^{-1}\}_{k=0}^{d-1} \text{\;is d-monotone}\big\},\qquad\qquad\qquad\;\\
\mathcal{SM}_{\infty}&\ :=\big\{\{x_k\}_{k\in\IN_0}\in\IR_{>0}^{\IN_0}\,|\,x_0<1,\{\ln x_k^{-1}\}_{k\in\IN_0} \text{\;is completely monotone}\big\},\qquad\qquad\qquad\;
\end{align*}
respectively. 
\end{definition}  

For $1\leq j\leq d-1-k$, we have $\nabla^{j}x_k=\nabla^{j-1}x_k-\nabla^{j-1}x_{k+1}$. Further, condition (\ref{d-mon1}) is equivalent to $\nabla^{d-1-k}x_k\geq0$ for $k=0,1,\ldots,d-1$, \red{see \cite{schoenberg32}} or \cite{mai09}. 

By definition, each sequence $\{x_k\}_{k=0}^{d-1}\in\mathcal{M}_d$ (resp.\ $\{x_k\}_{k\in\IN_0}\in\mathcal{M}_{\infty}$) is $[0,1]$-valued and decreasing, whereas each sequence $\{x_k\}_{k=0}^{d-1}\in\mathcal{SM}_d$ (resp.\ $\{x_k\}_{k\in\IN_0}\in\mathcal{SM}_{\infty}$) is $(0,1]$-valued and increasing. Further, in the definition of the set $\mathcal{LM}_d$, $\ln x_{d-1}\geq0$ is the only missing condition for $\{\ln x_k\}_{k=0}^{d-1}$ to be $d$-monotone. Moreover, as we show later in this section, there exists a close relationship between the classes $\mathcal{M}_d$, $\mathcal{LM}_d$, and $\mathcal{SM}_d$, such as, e.g., $\mathcal{LM}_d\subsetneq\mathcal{M}_d$ and $\mathcal{LM}_{\infty}\subsetneq\mathcal{M}_{\infty}$, see Figure \ref{tab_intro}. 

Trivially, each completely monotone sequence $\{x_k\}_{k\in\IN_0}$ is $d$-monotone for all $d\in\IN$, when truncated at $x_{d-1}$. Analogously, each completely log-monotone sequence $\{x_k\}_{k\in\IN_0}$ is $d$-log-monotone for all $d\in\IN$, when truncated at $x_{d-1}$. However, not every $d$-(log-) monotone sequence can be extended to a completely (log-)monotone sequence. Consider the following example. 

\begin{example}[Proper $3$-monotone sequence]\label{mon}
Consider the sequence\\ $(x_0,x_1,x_2):=(1,1/2,1/5)$. It is easy to check that $(x_0,x_1,x_2)\in\mathcal{M}_3$. Assume that $(x_0,x_1,x_2)$ can be extended to an infinite completely monotone sequence $\{x_k\}_{k\in\IN_0}\in\mathcal{M}_{\infty}$. Then, by Hausdorff's moment problem, $x_k=\IE[\tau^k]$ for $k=0,1,2$ and a $[0,1]$-valued random variable $\tau$. Jensen's inequality gives a contradiction:
\begin{gather*}
\frac{1}{5}=x_2=\IE[\tau^2]\geq\IE[\tau]^2=x_1^2=\frac{1}{4}.
\end{gather*}
Indeed, it follows from the Hankel determinants criterion, see \cite{karlin53} and \cite{dette97}, p.\ 20, that only $(1,1/2,x_2)\in\mathcal{M}_3$ with $x_2\geq1/4$ can be extended to an infinite completely monotone sequence. In contrast, each $(1,x_1)\in\mathcal{M}_2$ is extendible to a completely monotone sequence.        
\end{example}

The next theorem constitutes an analytical characterization of $d$-variate exchangeable narrow- and wide-sense geometric distributions in terms of $d$-monotone and $d$-log-monotone sequences, respectively. This facilitates a rigorous study of $\mathcal{G^{N,X}}$ and $\mathcal{G^{W,X}}$. In particular, it helps to determine the subclasses with conditionally i.i.d.\ components in the sense of de Finetti's theorem, see Section \ref{sec:ext}. 
  
\begin{theorem}[Characterization of the exchangeable geometric law] \label{exch}\text{}
\vspace{-0.2cm}
\begin{itemize}
\item[($\mathcal{N}$)] Consider the parametric family $\mathcal{G^{N,X}}$ of all exchangeable, non-degenerate $d$-dimensional narrow-sense geometric survival functions. Then,
\begin{align*}
\mathcal{G^{N,X}}&\ =\big\{\bar{F}^{\mathcal{N}}_{n_1,\ldots,n_d}=\prod_{k=1}^{d}a_k^{n_{(d-k+1)}}\,|\,\{a_k\}_{k=1}^d\in\mathcal{SM}_d\big\}\\
     &\ =\big\{\bar{F}^{\mathcal{N}}_{n_1,\ldots,n_d}=\prod_{k=1}^{d}b_k^{n_{(d-k+1)}-n_{(d-k)}}\,|\,(1,b_1,\ldots,b_d)\in\mathcal{LM}_{d+1}\big\}.
\end{align*} 
The parameters $p_1,\ldots,p_d\in (0,1]$ with $\prod_{i=1}^{d}p_i<1$ and $(a_1,\ldots,a_d)\in\mathcal{SM}_d$ are related via
\begin{align*}
a_k&\ =\prod_{i=1}^{d-k+1}p_i^{\binom{d-k}{i-1}},\quad k=1,\ldots,d,\\
p_k&\ =\prod_{i=1}^k a_{d-i+1}\,^{(-1)^{(k-i)}\binom{k-1}{i-1}}, \quad k=1,\ldots,d.
\end{align*}
The parameters $\{b_k\}_{k=1}^d$ with $(1,b_1,\ldots,b_d)\in\mathcal{LM}_{d+1}$ are given as in (\ref{b's}).   
\item[($\mathcal{W}$)] Consider the parametric family $\mathcal{G^{W,X}}$ of all exchangeable $d$-dimensional wide-sense geometric survival functions. Then,
\begin{gather*}
\mathcal{G^{W,X}}=\big\{\bar{F}^{\mathcal{W}}_{n_1,\ldots,n_d}=\prod_{k=1}^{d}\beta_k^{n_{(d-k+1)}-n_{(d-k)}}\,|\,(1,\beta_1,\ldots,\beta_d)\in\mathcal{M}_{d+1}\big\}.
\end{gather*} 
The parameters $\tilde{p}_1,\ldots,\tilde{p}_d\hspace{-0.1cm}\in\hspace{-0.1cm} [0,1]$ with $\sum_{i=1}^{d}\hspace{-0.1cm}\binom{d}{i-1}\tilde{p}_i\hspace{-0.1cm}\leq\hspace{-0.1cm}1$, $\sum_{i=1}^{d}\hspace{-0.1cm}\binom{d-1}{i-1}\tilde{p}_i\hspace{-0.1cm}<\hspace{-0.1cm}1$, and $\{\beta_k\}_{k=1}^d$ with $(1,\beta_1,\ldots,\beta_d)\in\mathcal{M}_{d+1}$ are related via
\begin{align*}
\beta_k&\ =\sum_{i=1}^{d-k+1}\binom{d-k}{i-1}\tilde{p}_i,\quad k=1,\ldots,d,\\
\tilde{p}_k&\ =\nabla^{k-1}\beta_{d-k+1},\quad k=1,\ldots,d.
\end{align*}
\end{itemize}
\end{theorem}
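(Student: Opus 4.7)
My plan is to exploit the reparameterizations in Remark \ref{repar} and to recognize the admissibility conditions on the parameters $\{\tilde{p}_i\}$ (wide sense) and $\{p_i\}$ (narrow sense) as exactly the top-order finite-difference nonnegativities that, by the remark following Definition \ref{d-mon}, characterize $d$-monotonicity. The linchpin is a single combinatorial inversion relating $\{\tilde{p}_i\}$ to $\{\beta_k\}$; the narrow-sense case will then be reduced to the wide-sense one by passing to logarithms.

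For the wide-sense part, I would start from $\bar{F}^{\mathcal{W}}_{n_1,\ldots,n_d}=\prod_{k=1}^{d}\beta_{k}^{n_{(d-k+1)}-n_{(d-k)}}$ with $\beta_k=\sum_{i=1}^{d-k+1}\binom{d-k}{i-1}\tilde{p}_i$, as provided by Remark \ref{repar}$(\mathcal{W})$. The pivotal claim is the inversion
\begin{gather*}
\tilde{p}_k=\nabla^{k-1}\beta_{d-k+1},\qquad k=1,\ldots,d,
\end{gather*}
together with the companion identity $1-\sum_{i=1}^{d}\binom{d}{i-1}\tilde{p}_i=\nabla^{d}\beta_{0}$ under the convention $\beta_{0}:=1$. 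Both follow from the elementary identity $\binom{j}{i}\binom{j-i}{r-1}=\binom{j}{r-1}\binom{j-r+1}{i}$ combined with the alternating row sum $\sum_{i=0}^{n}(-1)^{i}\binom{n}{i}=\delta_{n,0}$. Given this inversion, the admissibility conditions on $(\tilde{p}_1,\ldots,\tilde{p}_d)$ from Definition \ref{def}$(\mathcal{W})$ --- nonnegativity $\tilde{p}_i\geq 0$; $\sum_{i=1}^{d}\binom{d}{i-1}\tilde{p}_i\leq 1$, which is exactly $\tilde{p}_{\{1,\ldots,d\}}\geq 0$ after enforcing $\sum_I\tilde{p}_I=1$; and non-degeneracy $\sum_{i=1}^{d}\binom{d-1}{i-1}\tilde{p}_i<1$ --- translate verbatim into the top-row nonnegativities $\nabla^{k-1}\beta_{d-k+1}\geq 0$ for $k=1,\ldots,d+1$, together with $\beta_1<1$. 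By the remark after Definition \ref{d-mon}, these top-row conditions are equivalent to $(d+1)$-monotonicity of $(1,\beta_1,\ldots,\beta_d)$, i.e.\ membership in $\mathcal{M}_{d+1}$. Both implications then close by invoking Theorem \ref{thm_a}$(\mathcal{W})$ and the stochastic construction in Definition \ref{def}$(\mathcal{W})$.

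For the narrow-sense part, the crucial observation is that taking logarithms of $a_k=\prod_{i=1}^{d-k+1}p_i^{\binom{d-k}{i-1}}$ yields $-\ln a_k=\sum_{i=1}^{d-k+1}\binom{d-k}{i-1}(-\ln p_i)$, which is structurally \emph{identical} to the wide-sense relation under the substitution $\beta_k\leftrightarrow -\ln a_k$, $\tilde{p}_i\leftrightarrow -\ln p_i$. Hence the same inversion immediately yields $-\ln p_k=\nabla^{k-1}(-\ln a_{d-k+1})$, which after reindexing produces the stated formula $p_k=\prod_{i=1}^{k}a_{d-i+1}^{(-1)^{k-i}\binom{k-1}{i-1}}$. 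The admissibility $p_i\in(0,1]$ and $\prod_{i=1}^{d}p_i<1$ are equivalent, respectively, to $-\ln p_i\geq 0$ and to $-\ln a_1=\sum_{i=1}^{d}\binom{d-1}{i-1}(-\ln p_i)>0$, i.e.\ $a_1<1$; by the same reasoning as above these translate to $\{a_k\}_{k=1}^d\in\mathcal{SM}_d$. The equivalence $\{a_k\}\in\mathcal{SM}_d \Leftrightarrow (1,b_1,\ldots,b_d)\in\mathcal{LM}_{d+1}$, with $b_k=\prod_{i\leq k}a_i$, follows from the telescoping identity $\nabla^{j}\ln b_k=-\nabla^{j-1}\ln a_{k+1}$ for $j\geq 1$, together with the match $b_1=a_1<1$.

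The main obstacle is the combinatorial inversion $\tilde{p}_k=\nabla^{k-1}\beta_{d-k+1}$: although each manipulation is elementary, it is the bridge between the probabilistic bookkeeping on subsets $I\subseteq\{1,\ldots,d\}$ and the analytic language of $d$-monotone sequences, and without it neither direction of the characterization is visible. Once it is in hand, the narrow-sense inversion, the admissibility translations, and the $\mathcal{SM}_d\leftrightarrow\mathcal{LM}_{d+1}$ bridge all unfold by direct substitution or telescoping, and both characterizations fall out simultaneously.
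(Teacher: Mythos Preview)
Your proposal is correct and follows essentially the same route as the paper: both hinge on the binomial inversion $\tilde{p}_k=\nabla^{k-1}\beta_{d-k+1}$ (together with the companion identity for $\nabla^d\beta_0$), the translation of admissibility into the top-order difference inequalities, and the telescoping link $\nabla^{j}\ln b_k=-\nabla^{j-1}\ln a_{k+1}$ between $\mathcal{SM}_d$ and $\mathcal{LM}_{d+1}$. Your one organizational improvement over the paper is to note explicitly that the narrow-sense relation $-\ln a_k=\sum_{i}\binom{d-k}{i-1}(-\ln p_i)$ is the wide-sense relation under the substitution $\beta_k\leftrightarrow-\ln a_k$, $\tilde{p}_i\leftrightarrow-\ln p_i$, so the inversion formula for $p_k$ need not be recomputed from scratch; the paper instead verifies both inversions by parallel but separate binomial calculations.
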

\begin{proof} Refer to \ref{sec:appb}.\qed
\end{proof}

\begin{example}[Three-variate exchangeable geometric law]\label{mexchgd}\text{}\\
Consider the survival function (\ref{Geo_ind}) of a three-variate distribution with mutually independent geometric marginals from Example \ref{mgd}. In the exchangeable case, i.e. $p_1=p_2=p_3=:p$, (\ref{Geo_ind}) simplifies to
\begin{gather}
\bar{F}_{n_1,n_2,n_3}=p^{n_1+n_2+n_3}=\prod_{k=1}^{3}b_k^{n_{(4-k)}-n_{(3-k)}},
\label{Geo_ind_exch}
\end{gather}
with $b_i=p^i$, $i=1,2,3$. It can be easily verified that $(1,p,p^2,p^3)\in\mathcal{LM}_{4}$.

Another example of an exchangeable three-variate narrow sense geometric distribution is $(\tau_1,\tau_2,\tau_3)$ with identically $\Geo(1-p)$-distributed marginals, $p\in[0,1)$, and the dependence structure induced by the upper Fr\'echet-Hoeffding bound. The survival function of $(\tau_1,\tau_2,\tau_3)$ is then given by 
\begin{gather}
\bar{F}_{n_1,n_2,n_3}=p^{n_{(3)}}=\prod_{k=1}^{3}b_k^{n_{(4-k)}-n_{(3-k)}},
\label{Geo_hf_exch}
\end{gather} 
with $b_i=p$, $i=1,2,3$. Trivially, $(1,p,p,p)\in\mathcal{LM}_{4}$. Note that the lower Fr\'echet-Hoeffding bound with homogeneous geometric margin\-als does not lead to a wide-sense distribution in dimension $d=2$. In the non-exchangeable case, neither the upper nor the lower Fr\'echet-Hoeffding bound with geometric marginals satisfy the local LM property.   
\end{example}

Theorem \ref{exch} $(\mathcal{N})$ provides an analytical characterization of the exchangeable, non-degener\-ate narrow-sense geometric distribution. The only distribution not covered by Theorem \ref{exch} $(\mathcal{N})$ is the degenerate distribution with point mass at $\{1\}^d$. It can be obtained by setting $p_k=0$ for at least one $k=1,\ldots,d$, or, equivalently, $a_1=0$ and $b_k=0$ for all $k=1,\ldots,d$. The corresponding survival function is $\bar{F}^{\mathcal{N}}_{n_1,\ldots,n_d}=\ind_{\{n_1=\ldots=n_d=0\}}$. In the context of a wide-sense geometric distribution, the degenerate case results from $p_k=1$ for all $k=1,\ldots,d$, or, equivalently, $\beta_k=0$ for all $k=1,\ldots,d$. Note also that each component of an exchangeable narrow-sense geometric distribution is $Geo(1-a_1)$-distributed with $$a_1=b_1=\prod_{i=1}^{d}p_i^{\binom{d-1}{i-1}},$$ whereas each component of an exchangeable wide-sense geometric distribution is $Geo(1-\beta_1)$-distributed with $\beta_1=\sum_{i=1}^{d}\binom{d-1}{i-1}\tilde{p}_i$.

In Theorem \ref{exch} $(\mathcal{W})$, the original parameters $\tilde{p}_k$, $k=1,\ldots,d$, of an exchangeable wide-sense geometric distribution are explicitly given in terms of the $(k-1)$-fold difference operator of $\beta_{d-k+1}$. Analogous relationships can be established in the case of an exchangeable narrow-sense geometric distribution as well. Namely, for $k=1,\ldots,d$, it holds that
$$p_k=\exp(-\nabla^{k-1}\ln a^{-1}_{d-k+1})=\exp(-\nabla^k\ln b_{d-k}).$$      
\par
Theorem \ref{prop} shows that the condition $(1,\beta_1,\ldots,\beta_d)\in\mathcal{M}_{d+1}$ in Theorem \ref{exch} $(\mathcal{W})$ is even \emph{necessary and sufficient} for the multi-indexed sequence $\{\bar{F}_{n_1,\ldots,n_d}\}_{n_1,\ldots,n_d \in \IN_0}$ in (\ref{Geo_surv_new_w}) to define a discrete survival function.

\begin{theorem}[Analytical characterization of the survival function]\label{prop}\text{}\\ The multi-indexed sequence $\{\bar{F}_{n_1,\ldots,n_d}\}_{n_1,\ldots,n_d \in \IN_0}$, given by
\begin{gather*}
\bar{F}_{n_1,\ldots,n_d}=\prod_{k=1}^{d}\beta_k^{n_{(d-k+1)}-n_{(d-k)}}, \quad n_1,\ldots,n_d \in \IN_0,
\end{gather*} 
is a d-dimensional survival function if and only if $(1,\beta_1,\ldots,\beta_d)\in\mathcal{M}_{d+1}$. In this case, $\{\bar{F}_{n_1,\ldots,n_d}\}_{n_1,\ldots,n_d \in \IN_0}$ defines an exchangeable wide-sense geometric distribution.% on $\IN^d_0$.
\end{theorem}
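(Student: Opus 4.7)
The plan is to leverage the two already-available results: Theorem \ref{exch} ($\mathcal{W}$), which identifies the exchangeable wide-sense geometric survival functions with $(d+1)$-monotone sequences, and Theorem \ref{Uniq}, which characterizes wide-sense geometric distributions as the unique solutions to the local LM functional equation. The sufficiency direction is then immediate: if $(1,\beta_1,\ldots,\beta_d) \in \mathcal{M}_{d+1}$, the stated product is exactly the representative of $\mathcal{G^{W,X}}$ supplied by Theorem \ref{exch} ($\mathcal{W}$), hence is a valid survival function.

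For the necessity direction I would proceed in three steps. First, the formula is permutation invariant, since $\bar{F}_{n_1,\ldots,n_d}$ depends on $(n_1,\ldots,n_d)$ only through the order statistics. Second, $\bar{F}$ satisfies the local LM property (\ref{LM}). By exchangeability it suffices to verify this for $I = \{1,\ldots,k\}$; setting $n_{k+1} = \ldots = n_d = 0$ and writing $m_1 \leq \ldots \leq m_k$ for the sorted values of $n_1,\ldots,n_k$, only the factors with $j = 1,\ldots,k$ contribute and one obtains
\[
\bar{F}(n_1,\ldots,n_k,0,\ldots,0) = \beta_k^{m_1} \prod_{j=1}^{k-1} \beta_j^{m_{k-j+1} - m_{k-j}}.
\]
Shifting each of $n_1,\ldots,n_k$ by the same $m$ shifts each $m_i$ by $m$, leaves every inner difference $m_{k-j+1}-m_{k-j}$ unchanged, and replaces $\beta_k^{m_1}$ by $\beta_k^{m_1+m}$. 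Since $\bar{F}(m,\ldots,m,0,\ldots,0) = \beta_k^m$, this is precisely the multiplicative identity that (\ref{LM}) demands.

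Third, with exchangeability and the LM property established, Theorem \ref{Uniq} places the distribution in $\mathcal{G^W}$, and together with exchangeability in $\mathcal{G^{W,X}}$. Theorem \ref{exch} ($\mathcal{W}$) then furnishes parameters $(1,\gamma_1,\ldots,\gamma_d) \in \mathcal{M}_{d+1}$ such that $\bar{F} = \prod_{k} \gamma_k^{n_{(d-k+1)}-n_{(d-k)}}$. Comparing the two product representations at the configuration $n_1 = \ldots = n_{d-k} = 0$, $n_{d-k+1} = \ldots = n_d = 1$ (where all exponents vanish except the one attached to $\beta_k$, resp.\ $\gamma_k$) forces $\gamma_k = \beta_k$ for every $k$, so $(1,\beta_1,\ldots,\beta_d) \in \mathcal{M}_{d+1}$. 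The final assertion that $\bar{F}$ is then an exchangeable wide-sense geometric distribution is a by-product of the same chain of implications. The only point that requires care is the bookkeeping of order statistics in the LM verification, but the translation invariance of the differences $n_{(d-j+1)}-n_{(d-j)}$ for $j < k$ makes that step routine.
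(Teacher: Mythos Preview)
Your proposal is correct and follows essentially the same route as the paper's (corrected) proof: sufficiency from Theorem~\ref{exch}~($\mathcal{W}$), necessity by observing exchangeability and the local LM property, invoking Theorem~\ref{Uniq}, and then reading off the parameter sequence via Theorem~\ref{exch}~($\mathcal{W}$). The paper compresses the LM verification and the identification $\gamma_k=\beta_k$ into the phrase ``as is easily seen,'' whereas you spell both out; your added detail is accurate and the evaluation at $n_1=\ldots=n_{d-k}=0$, $n_{d-k+1}=\ldots=n_d=1$ is exactly the right way to isolate $\beta_k$.
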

\begin{proof} \red{Refer to \ref{sec:appb}. The proof in the published manuscript is overly complicated since the key property of the distribution is not exploited. We have replaced it by a very simple argumentation based on Theorems \ref{Uniq} and \ref{exch}.} \qed
\end{proof}      

The following example illustrates that $\mathcal{G^{N,X}}$-distributions form a proper subclass of $\mathcal{G^{W,X}}$-distributions.     

\begin{example}[Geometric in the wide-, but not in the narrow sense]\label{nw1}
Consider again the sequence $(1,1/2,1/5)\in\mathcal{M}_3$ from Example \ref{mon}. From Theorem \ref{prop}, it follows that 
\begin{gather*}
\bar{F}_{n_1,n_2}=a_1^{\max(n_1,n_2)}a_2^{\min(n_1,n_2)}, \quad n_1,n_2 \in \IN_0,
\end{gather*} 
with $(a_1,a_2)=(1/2,2/5)$, defines an exchangeable bivariate wide-sense geo\-metric distribution. However, by Theorem \ref{exch}, this distribution is not a $\mathcal{G^{N,X}}$-distribution, since 
$$\nabla^1\ln a_1^{-1}=\ln\frac{4}{5}<0,$$
and, consequently, $(a_1,a_2)\notin\mathcal{SM}_2$. Recall that the condition $(a_1,a_2)\in\mathcal{SM}_2$ ensures that the original parameters $p_1$, $p_2$ of the $\mathcal{G^{N,X}}$-distribution are $(0,1]$-valued. The above choice of $(a_1,a_2)$ leads to $p_1=a_2=2/5$ and $p_2=a_1/a_2=5/4>1$.     
\end{example}
%Section 3.1 studies a general probabilistic model containing precisely all multivariate geometric LM distributions and provides an alternative probabilistic proof of Theorem \ref{prop}.  

Next, we discuss conditions under which a $d$-variate exchangeable distribution can be obtained by truncation of an infinite exchangeable random sequence. By virtue of de Finetti's seminal theorem, this is equivalent to determining the subclasses with conditionally independent and identically distributed components.

\section{Extendible subfamilies}
\label{sec:ext}

In this section, we provide necessary and sufficient conditions on parameters of the survival function in (\ref{Geo_surv_new}) (resp.\ in (\ref{Geo_surv_new_w})) to define a survival function of a narrow- (resp.\ wide-sense) geometric distribution in any dimension $d\in\IN$. Consider the exchangeable random vector $(\tau_1,\ldots,\tau_d)$, geometrically distributed (in the narrow or in the wide sense) with original parameters $p_{1,d},\ldots,p_{d,d}$. Clearly, e.g.\ by Theorem \ref{exch}, every subvector has a geometric law as well. Denote the parameters of a subvector of $(\tau_1,\ldots,\tau_d)$ of length $j$, $j\in\{1,\ldots,d\}$, by  $p_{k,j}$, $k=1,\ldots,j$. For the sake of notational convenience, throughout Section \ref{sec:ext}, we omit the tilde symbol on the parameters $p_{k,j}$ of a wide-sense distribution. We depict the relation between the $p_{k,j}$, $j\in\{1,\ldots,d\}$, $k=1,\ldots,j$, in the following triangular scheme: 

\begin{gather*}
p_{1,1}\\ 
p_{1,2} \qquad p_{2,2}\\
p_{1,3}\qquad p_{2,3}\qquad p_{3,3}\\
\iddots\qquad\qquad\qquad\qquad\qquad\ddots\\
p_{1,d-1}\quad \qquad\qquad\;\;\;\ldots\quad\;\qquad\quad\;\; p_{d-1,d-1}\\
{p_{1,d}}\qquad\;\, p_{2,d}\quad\;\;\qquad\quad\ldots\quad\qquad p_{d-1,d}\qquad\quad p_{d,d}
\end{gather*}

By Remark \ref{repar}, the parameters of two adjacent rows are related multiplicatively, via $p_{k,j}=p_{k,j+1}\,p_{k+1,j+1}$, for all $k=1,\ldots,j$, $j=1,\ldots,d-1$, in the case of a $\mathcal{G^{N,X}}(\textbf{p})$-distribution, and additively, via $p_{k,j}=p_{k,j+1}+p_{k+1,j+1}$, in the case of a $\mathcal{G^{W,X}}(\textbf{p})$-distribution. Recall that in the context of a $\mathcal{G^{N,X}}(\textbf{p})$-distribution, the parameter sequence $(a_1,\ldots,a_j)$, given by
\begin{gather*}
a_k=p_{1,k}=\prod_{i=1}^{d-k+1}p_{i,d}^{\binom{d-k}{i-1}},\qquad k=1,\ldots,j,
\end{gather*}
determines all $k$-dimensional marginal distributions for $k=1,\ldots,j$, and each $\tau_k$ is distributed according to $Geo(1-a_1)$. For a $\mathcal{G^{W,X}}(\textbf{p})$-distribution, all $k$-dimensional marginals are determined by the parameter sequence $(\beta_1,\ldots,\hspace{-0.05cm}\beta_j\hspace{-0.05cm})$, where
\begin{gather*}
\beta_k=p_{1,k}=\sum_{i=1}^{d-k+1}\binom{d-k}{i-1}p_{i,d}, \qquad k=1,\ldots,j,
\end{gather*}
and each $\tau_k$ is distributed according to $Geo(1-\beta_1)$. 

Extendibility of $(\tau_1,\ldots,\tau_d)$ \emph{by one dimension} to an exchangeable geometrically distributed $(\tau_1,\ldots,\tau_d,\tau_{d+1})$ 
is equivalent to extendibility of a given triangle by an additional row $p_{1,d+1},\ldots,p_{d,d+1},p_{d+1,d+1}$ without violating the multiplicative, resp.\ additive, structure. For a $\mathcal{G^{N,X}}(\textbf{p})$-distribution, it is the case precisely when there exists a $q\in(0,1]$ s.t.
\begin{gather*}
q^{-1}\,\prod_{i=1}^{2k-1}p_{i,d}^{(-1)^{i+1}}\in(0,1],\qquad  q\,\prod_{i=1}^{2k}p_{i,d}^{(-1)^{i}}\in(0,1],
\end{gather*}
for all $k=1,\ldots,d/2$, if $d$ is even, or $k=1,\ldots,((d-1)/2+1)$, if $d$ is odd. Similarly, for a $\mathcal{G^{W,X}}(\textbf{p})$-distribution, extendibility by one dimension is equivalent to the existence of a $q\in[0,1]$ s.t.
\begin{gather*}
\sum_{i=1}^{2k-1}(-1)^{i+1}p_{i,d}-q\in[0,1],\qquad \sum_{i=1}^{2k}(-1)^{i}p_{i,d}+q\in[0,1],
\end{gather*}
for all $k=1,\ldots,d/2$, if $d$ is even, or $k=1,\ldots,((d-1)/2+1)$, if $d$ is odd. The extendibility of $(\tau_1,\ldots,\tau_d)$ to an \emph{infinite} exchangeable sequence $\{\tau_k\}_{k\in\IN}$, such that every finite subvector follows an exchangeable geometric distribution, corresponds to infinite extendibility of the associated triangle. This is equivalent to extendibility of the sequence $\{a_k\}_{k=1}^d$ to an infinite completely strong-monotone sequence $\{a_k\}_{k=1}^{\infty}\in\mathcal{SM}_{\infty}$, resp.\ extendibility of the sequence $(1,\beta_1,\ldots,\beta_d)$ to an infinite completely monotone sequence. Corollary \ref{ext} summarizes these results.       

\begin{corollary}[Characterization of the infinitely extendible geometric law] \label{ext}
Denote $b_0:=1$ and $\beta_0:=1$.
\begin{itemize}
\item[($\mathcal{N})$] Consider the parametric family $\mathcal{G^{N,E}}$ of all infinitely extendible, non-dege\-nerate $d$-dimensional narrow-sense geometric survival functions. Then,
\begin{align*}
\hspace{-1.6cm}\mathcal{G^{N,E}}&\ \hspace{-0.2cm}=\hspace{-0.05cm}\big\{\bar{F}^{\mathcal{N}}_{n_1,\ldots,n_d}\hspace{-0.05cm}=\hspace{-0.1cm}\prod_{k=1}^{d}a_k^{n_{(d-k+1)}}|\{a_k\}_{k=1}^d \text{ is extendible to }\{a_k\}_{k\in\IN}\hspace{-0.08cm}\in\hspace{-0.08cm}\mathcal{SM}_{\infty}\big\}\\
     &\ \hspace{-0.2cm}=\hspace{-0.05cm}\big\{\bar{F}^{\mathcal{N}}_{n_1,\ldots,n_d}\hspace{-0.05cm}=\hspace{-0.1cm}\prod_{k=1}^{d}b_k^{n_{(d-k+1)}-n_{(d-k)}}|\{b_k\}_{k=0}^d\text{ is extendible to }\{b_k\}_{k\in\IN_0}\hspace{-0.08cm}\in\hspace{-0.08cm}\mathcal{LM}_{\infty}\big\}.
\end{align*}  
\item[($\mathcal{W}$)] Consider the parametric family $\mathcal{G^{W,E}}$ of all infinitely extendible $d$-dimen\-sional wide-sense geometric survival functions. Then,
\begin{gather*}
\hspace{-0.8cm}\mathcal{G^{W,E}}\hspace{-0.1cm}=\hspace{-0.05cm}\big\{\bar{F}^{\mathcal{W}}_{n_1,\ldots,n_d}\hspace{-0.05cm}=\hspace{-0.1cm}\prod_{k=1}^{d}\beta_k^{n_{(d-k+1)}-n_{(d-k)}}|\{\beta_k\}_{k=0}^d\text{ is extendible to }\{\beta_k\}_{k\in\IN_0}\hspace{-0.08cm}\in\hspace{-0.08cm}\mathcal{M}_{\infty}\big\}.
\end{gather*} 
\end{itemize}
\end{corollary}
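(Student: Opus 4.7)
The plan is to reduce to Theorem \ref{exch}, which already pins down the parameter space for each fixed dimension, combined with the observation that marginals of an exchangeable geometric distribution correspond to truncations of the associated parameter sequence. Kolmogorov's extension theorem then closes the circle.

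First, I would verify marginal consistency directly from the reparameterized survival functions in Remark \ref{repar}. For the wide-sense case, starting from $\bar{F}^{\mathcal{W}}_{n_1,\ldots,n_d}=\prod_{k=1}^{d}\beta_k^{n_{(d-k+1)}-n_{(d-k)}}$, setting $n_d=0$ identifies $n_{(1)}=0$ and reindexes the remaining order statistics as those of $(n_1,\ldots,n_{d-1})$, so the $k=d$ factor drops out and the exponents telescope to yield $\prod_{k=1}^{d-1}\beta_k^{n_{(d-k)}-n_{(d-k-1)}}$. By exchangeability, the $j$-variate marginal of a $\mathcal{G^{W,X}}$-law with parameters $(1,\beta_1,\ldots,\beta_d)$ is therefore itself a $\mathcal{G^{W,X}}$-law with parameters $(1,\beta_1,\ldots,\beta_j)$. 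The analogous calculation for (\ref{kurzi}) gives the same truncation property for the sequences $(a_1,\ldots,a_d)$ and $(b_0,\ldots,b_d)$ in the narrow-sense case.

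Second, I would put both directions together. If $(\tau_1,\ldots,\tau_d)\sim\mathcal{G^{W,X}}$ is infinitely extendible, then for every $d'\geq d$ there is an exchangeable $d'$-variate wide-sense geometric distribution admitting $(\tau_1,\ldots,\tau_d)$ as a marginal. By Theorem \ref{exch}, its parameter vector is $(1,\beta_1,\ldots,\beta_{d'})\in\mathcal{M}_{d'+1}$, and marginal consistency forces it to agree with $(1,\beta_1,\ldots,\beta_d)$ on the first $d+1$ coordinates. Letting $d'\to\infty$ produces an infinite extension whose every prefix is $(d'+1)$-monotone, i.e.\ an element of $\mathcal{M}_{\infty}$. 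Conversely, given such an extension $\{\beta_k\}_{k\in\IN_0}\in\mathcal{M}_{\infty}$, Theorem \ref{exch} yields for each $d'$ a $d'$-variate $\mathcal{G^{W,X}}$-law, and marginal consistency makes this a Kolmogorov-consistent projective family on $\IN^{\IN}$, so an infinite exchangeable sequence exists. The narrow-sense equivalences follow identically, using the bijections $a_k=b_k/b_{k-1}$ and $b_k=\prod_{i=1}^{k}a_i$ from Remark \ref{repar} to identify infinite completely strong-monotone extensions of $\{a_k\}$ with infinite completely log-monotone extensions of $\{b_k\}$.

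I expect no deep obstacle here: the main piece of bookkeeping is the order-statistic reindexing used to establish marginal consistency, which is where one must be careful but which is mechanical once the reparameterization (\ref{kurzi})/(\ref{Geo_surv_new_w}) is in hand. The triangular conditions stated in the discussion preceding the corollary (existence of a suitable $q$ at each extension step) are then automatically equivalent to the sequence-level conditions $\mathcal{SM}_{\infty}$, $\mathcal{LM}_{\infty}$, $\mathcal{M}_{\infty}$, since admissibility of one additional row in the triangle is precisely admissibility of one additional parameter in the associated monotone sequence.
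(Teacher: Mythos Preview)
Your proposal is correct and follows the same approach as the paper, which simply states that the assertion follows immediately from Theorem~\ref{exch}. You have spelled out the details the paper leaves implicit---the marginal-consistency computation showing that truncation of the parameter sequence corresponds to passing to lower-dimensional margins, and the appeal to Kolmogorov's extension theorem for the converse direction---but the reduction is identical.
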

\begin{proof} The assertion follows immediately from Theorem \ref{exch}.\qed
\end{proof}

By Hausdorff's moment problem, the condition $\{\beta_k\}_{k\in\IN_0}\in\mathcal{M}_{\infty}$ implies that $\{\beta_k\}_{k\in\IN_0}$ is the sequence of moments of a probability measure on $[0,1]$. In a similar manner, the completely log-monotone sequences $\{\hspace{-0.04cm}b_k\hspace{-0.03cm}\}_{k\in\IN_0}\hspace{-0.14cm}\in\hspace{-0.08cm}\mathcal{LM}_{\infty}$ are related to infinitely divisible probability measures.

\subsection{Relation to infinitely divisible distributions}
\label{sec:ext,infdiv}

The log-monotonicity of an infinite sequence is closely related to the existence of an infinitely divisible probability measure on $[0,\infty]$. Recall that a $[0,\infty]$-valued random variable $X$ is called infinitely divisible if for every $n\geq2$ there exists a sequence of $n$ independent and identically distributed $[0,\infty]$-valued random variables, whose sum equals $X$ in distribution.

\begin{example}[Infinitely divisible distributions on \text{$[0,\infty]$}]  
Every infinitely divisible distribution, discrete or continuous, on $[0,\infty)$ can be modified to an infinitely divisible distribution on $[0,\infty]$, by weighting the probability mass function so that it integrates to some $a$, $a\in(0,1)$, over $[0,\infty)$, and assigning the probability of $1-a$ to the value $\infty$. We illustrate this procedure taking the geometric distribution as an example. To this end, let $X\sim Geo(1-p)$. The distribution of $X$ is infinitely divisible. More precisely, comparing the characteristic function of $X-1$ with the one of the negative binomial distribution (see \cite{sato99}, p.\ 14), one immediately sees that for each $n\geq2$, $X-1$ equals in distribution the sum of $n$ independent and identically negative binomially NB$(\frac{1}{n},p)$-distributed random variables. Now consider a modification $\widetilde{X}$ distributed on $\IN\cup\{\infty\}$ according to $\widetilde{X}\,|\,\widetilde{X}\hspace{-0.1cm}<\hspace{-0.1cm}\infty\sim\text{Geo}(1-p)$, $\IP(\widetilde{X}<\infty)=a$, and $\IP(\widetilde{X}=\infty)=1-a$ for some $a\in(0,1)$. 
Next, we show that $\widetilde{X}$ is infinitely divisible. Fix an arbitrary $n\geq2$. Let $Y_1,\ldots,Y_n$ be independent and identically distributed random variables with $Y_1\,|\,Y_1\hspace{-0.1cm}<\hspace{-0.1cm}\infty\sim\text{NB}(\frac{1}{n},p)$, $\IP(Y_1<\infty)=a^{\frac{1}{n}}$, and $\IP(Y_1=\infty)=1-a^{\frac{1}{n}}$. Then, 
$$\IP\Big(\hspace{-0.05cm}1\hspace{-0.05cm}+\hspace{-0.07cm}\sum_{i=1}^n \hspace{-0.05cm}Y_i\hspace{-0.1cm}=\hspace{-0.05cm}k\hspace{-0.05cm}\Big)\hspace{-0.1cm}=\hspace{-0.05cm}\IP\Big(\hspace{-0.1cm}\sum_{i=1}^n \hspace{-0.05cm}Y_i\hspace{-0.1cm}=\hspace{-0.05cm}k-1\big|Y_i<\infty, i\hspace{-0.1cm}=\hspace{-0.1cm}1,\ldots,n\Big)\IP(Y_1\hspace{-0.1cm}<\hspace{-0.1cm}\infty)^n\hspace{-0.1cm}=\hspace{-0.05cm}\IP(\widetilde{X}\hspace{-0.1cm}=\hspace{-0.05cm}k),$$ and $$\IP\Big(1+\sum_{i=1}^n Y_i=\infty\Big)=1-\big(1-\IP(Y_1=\infty)\big)^n=1-a=\IP(\widetilde{X}=\infty).$$
Thus, $\widetilde{X}$ equals in distribution $\sum_{i=1}^n(Y_i+1/n)$, and the assertion follows. 
\end{example}

\red{The next lemma was originally used in the proof of Proposition \ref{extend} and is left here for the sake of our erratum.}
\begin{lemma}[\red{Gnedin and Pitman \cite{gnedin08}}] \label{moments}
Let $X$ be a $[0,\infty]$-valued random variable with $\IP(X<\infty)>0$. Then, the following statements are equivalent.
\begin{enumerate}
	\item[(i)] The distribution of $X$ is infinitely divisible on $[0,\infty]$. 
	\item[(ii)] For all $k\in\IN_0$, it holds that $\IE[e^{- k\,X}]=e^{-\mu_k}$, where $\{-\nabla\mu_k\}^{\infty}_{k=0}$ is completely monotone and $\mu_0=0$.
\end{enumerate}
\end{lemma}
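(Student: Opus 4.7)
I would base the proof on the Laplace exponent $\Psi(s):=-\ln\IE[e^{-sX}]$, $s\geq0$, so that $\mu_k=\Psi(k)$ for $k\in\IN_0$ and $\mu_0=0$ automatically. Setting $\tilde p:=e^{-X}\in[0,1]$ with the convention $e^{-\infty}:=0$, the identities $e^{-\mu_k}=\IE[\tilde p^k]$ exhibit $\{e^{-\mu_k}\}_{k\in\IN_0}$ as the moment sequence of a $[0,1]$-valued random variable, so Hausdorff's moment theorem forces the law of $\tilde p$ (equivalently, of $X$) to be uniquely determined by $\{\mu_k\}_{k\in\IN_0}$. This uniqueness is what makes the sampling of $\Psi$ at integer points sufficient to decide infinite divisibility.

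For (i)$\Rightarrow$(ii), I would invoke the L\'evy--Khintchine representation of a $[0,\infty]$-valued infinitely divisible law,
\begin{equation*}
\Psi(s)=q+cs+\int_{(0,\infty)}(1-e^{-sy})\,\nu(dy),\qquad s>0,
\end{equation*}
with killing rate $q\geq0$, drift $c\geq0$, and L\'evy measure $\nu$ satisfying $\int(1\wedge y)\,\nu(dy)<\infty$. A short computation gives
\begin{equation*}
-\nabla\mu_k=\mu_{k+1}-\mu_k=c+\int_{(0,\infty)}e^{-ky}(1-e^{-y})\,\nu(dy)+q\,\ind_{\{k=0\}},
\end{equation*}
which, under the conventions $e^{-k\cdot 0}=1$ and $e^{-k\cdot\infty}=\ind_{\{k=0\}}$, rewrites as $\int_{[0,\infty]}e^{-ky}\,\tilde\rho(dy)$ for the finite measure $\tilde\rho$ on $[0,\infty]$ that places mass $c$ at $0$, density $(1-e^{-y})\,\nu(dy)$ on $(0,\infty)$, and mass $q$ at $\infty$ (L\'evy integrability of $\nu$ ensures $\tilde\rho$ is finite). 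The change of variable $t=e^{-y}$ combined with Hausdorff's theorem then yields the complete monotonicity of $\{-\nabla\mu_k\}_{k\in\IN_0}$.

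For (ii)$\Rightarrow$(i) I would reverse the construction. Complete monotonicity of $\{-\nabla\mu_k\}$ and Hausdorff's theorem produce a finite measure $\tilde\rho$ on $[0,\infty]$ with $-\nabla\mu_k=\int_{[0,\infty]}e^{-ky}\,\tilde\rho(dy)$. I would then read off $c:=\tilde\rho(\{0\})$, $q:=\tilde\rho(\{\infty\})$, and define $\nu(dy):=(1-e^{-y})^{-1}\tilde\rho|_{(0,\infty)}(dy)$; boundedness of $(1\wedge y)/(1-e^{-y})$ on $(0,\infty)$ secures $\int(1\wedge y)\,\nu(dy)<\infty$, so the triple $(q,c,\nu)$ generates a $[0,\infty]$-valued infinitely divisible law with Laplace exponent $\Psi^{*}$. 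Telescoping $\mu_k=\sum_{j=0}^{k-1}(-\nabla\mu_j)$ and using the geometric identity $\sum_{j=0}^{k-1}e^{-jy}=(1-e^{-ky})/(1-e^{-y})$ inside the integral shows $\Psi^{*}(k)=\mu_k$ for every $k\in\IN_0$. Uniqueness of Hausdorff moments on $[0,1]$ then forces $X$ and the constructed variable to share the same law, so $X$ is infinitely divisible.

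The delicate point is the bookkeeping at the boundary of the one-point compactification $[0,\infty]$: the killing rate $q$ must appear as an atom of $\tilde\rho$ at $\infty$ and the drift $c$ as an atom at $0$, while the formulas must respect $\mu_0=\Psi(0)=0$ even though the right limit $\Psi(0^{+})=q$ may be strictly positive. Once this dictionary between the L\'evy--Khintchine triple $(q,c,\nu)$ and the Hausdorff measure $\tilde\rho$ is set up, the two implications reduce to routine calculations, and the nontrivial probabilistic content of the lemma is carried entirely by the moment-uniqueness statement on $[0,1]$.
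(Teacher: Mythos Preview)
Your argument is correct and self-contained. The paper, in its corrected form, does not actually prove the lemma: the erratum replaces the original argument by the one-line remark that the statement is a reformulation of Corollary~4.2 in Gnedin--Pitman \cite{gnedin08} (via their Definition~1.5). So there is nothing in the paper to compare against beyond that citation.

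What you do differently is give an explicit proof via the L\'evy--Khintchine representation for (possibly killed) subordinators, translating the triple $(q,c,\nu)$ into a finite Hausdorff measure $\tilde\rho$ on $[0,\infty]$ whose moments are the increments $\mu_{k+1}-\mu_k$, and then using moment-uniqueness on $[0,1]$ to close the loop. This is essentially the standard route and is in the spirit of how Gnedin--Pitman themselves argue; it has the advantage of being transparent about where each boundary contribution (the atom at $0$ for the drift, the atom at $\infty$ for the killing) enters. The paper's alternative argument for the closely related Proposition~\ref{extend} in Appendix~C avoids L\'evy--Khintchine altogether and instead works through the equivalence $\mathcal{LM}_\infty \leftrightarrow \{\sqrt[n]{b_k}\}\in\mathcal{M}_\infty$ for all $n$, deducing infinite divisibility directly from the $n$-th root property of the moment sequence. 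That approach is more elementary (no Bernstein-function machinery) but less informative about the structure of the limiting measure; yours makes the dictionary between $(q,c,\nu)$ and $\tilde\rho$ explicit, which is arguably more useful if one cares about identifying the L\'evy data from the sequence $\{\mu_k\}$. One small point worth stating explicitly in your write-up: the hypothesis $\IP(X<\infty)>0$ is precisely what guarantees $\IE[e^{-kX}]>0$ for all $k$, so that $\mu_k$ is finite and $\Psi$ is well defined.
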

\begin{proof} \red{This is simply a reformulation of Corollary $4.2$ in \cite{gnedin08} using their Definition $1.5$. In the published manuscript, a nonsensical proof was given.}\qed
\end{proof}

\red{Proposition \ref{extend} is a simple corollary of Hausdorff's moment problem. It} establishes a relationship between completely monotone sequences, completely log-monotone sequences, and the existence of an infinitely divisible probability measure.  

\begin{proposition}[\red{Gnedin and Pitman \cite{gnedin08}}]\label{extend}
\red{Let $\{b_k\}_{k\in\IN_0}$ be sequence of strictly positive real numbers.} The following statements are equivalent.
\begin{enumerate}
	\item[(i)] $\{b_k\}_{k\in\IN_0}\in\mathcal{LM}_{\infty}$.
	\item[(ii)] $\{b_k\}_{k\in\IN_0}$ is given by
    \begin{gather*}
    b_k=\int_{[0,\infty]}e^{-kx}\,\kappa(dx),\quad k\in\IN_0,
    \end{gather*}
for an infinitely divisible probability measure $\kappa$ on $[0,\hspace{-0.025cm}\infty]$ with \red{$\kappa(\{0\})<1$ and $\kappa(\{\infty\})<1$}.	
	\item[(iii)] $\{b_k^r\}^{\infty}_{k=0}\in\mathcal{M}_{\infty}$ for each fixed $r>0$.
	\vspace{0.07cm}
	\item[(iv)] $\{\sqrt[n]{b_k}\;\}^{\infty}_{k=0}\in\mathcal{M}_{\infty}$ for each fixed $n\in\IN$.
\end{enumerate}
\end{proposition}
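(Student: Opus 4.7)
My strategy is to establish the cycle (i) $\Leftrightarrow$ (ii), (ii) $\Rightarrow$ (iii) $\Rightarrow$ (iv) $\Rightarrow$ (ii), which gives the four-way equivalence. The core content is packaged in Lemma \ref{moments}, so the proof should be a bookkeeping exercise translating between Laplace exponents and log-differences, plus one application of uniqueness for Laplace transforms on $[0,\infty]$.

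\textbf{Step 1: (i) $\Leftrightarrow$ (ii) via Lemma \ref{moments}.} Set $\mu_k:=-\ln b_k$; since $b_k\in(0,1]$ this is well-defined and non-negative, with $\mu_0=0$ because $b_0=1$. A straightforward calculation gives the operator identity
\begin{equation*}
\nabla^{j}(-\nabla \mu_k)=\nabla^{j}(\mu_{k+1}-\mu_k)=-\nabla^{j+1}\mu_k=\nabla^{j+1}\ln b_k,\qquad j,k\in\IN_0,
\end{equation*}
so complete monotonicity of $\{-\nabla\mu_k\}$ is literally the same as $\nabla^{j}\ln b_k\ge0$ for all $j\in\IN$. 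By Lemma \ref{moments}, this is equivalent to the existence of an infinitely divisible $[0,\infty]$-valued random variable $X$ with $b_k=\IE[e^{-kX}]$, which is (ii) with $\kappa$ the law of $X$. The positivity $b_k>0$ for all $k$ translates into $\kappa(\{\infty\})<1$ (since for $k\geq1$ the mass at infinity contributes nothing), and the condition $b_1<1$ built into $\mathcal{LM}_\infty$ translates into $\kappa(\{0\})<1$.

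\textbf{Step 2: (ii) $\Rightarrow$ (iii) $\Rightarrow$ (iv).} If $b_k=e^{-\mu_k}$ with $\{-\nabla\mu_k\}$ completely monotone, then for every $r>0$ the sequence $\{-\nabla(r\mu_k)\}=r\{-\nabla\mu_k\}$ is again completely monotone with $r\mu_0=0$, so Lemma \ref{moments} produces an infinitely divisible law $\kappa_r$ on $[0,\infty]$ with Laplace transform $b_k^r$. Substituting $y=e^{-x}$ turns $b_k^r=\int_{[0,\infty]}e^{-kx}\kappa_r(dx)$ into the moment sequence of the $[0,1]$-valued random variable $e^{-Y}$, $Y\sim\kappa_r$, whence $\{b_k^r\}\in\mathcal{M}_\infty$ by Hausdorff's moment problem. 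The implication (iii) $\Rightarrow$ (iv) is the specialization $r=1/n$.

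\textbf{Step 3: (iv) $\Rightarrow$ (ii).} Fix $n\in\IN$. By Hausdorff's moment problem applied to $\{b_k^{1/n}\}\in\mathcal{M}_\infty$, there is a probability measure $\nu_n$ on $[0,1]$ with $b_k^{1/n}=\int_0^1 y^k\,\nu_n(dy)$; rewriting $y=e^{-x}$ yields a probability measure $\tilde{\nu}_n$ on $[0,\infty]$ with $b_k^{1/n}=\int_{[0,\infty]}e^{-kx}\tilde{\nu}_n(dx)$. Hence $b_k$ is the Laplace transform of the $n$-fold convolution $\tilde{\nu}_n^{*n}$. Since the Laplace transform on nonnegative integers already determines any probability measure on $[0,\infty]$, there is a single measure $\kappa$ with $\kappa=\tilde{\nu}_n^{*n}$ for every $n$, and $b_k=\int e^{-kx}\kappa(dx)$. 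By definition, $\kappa$ is infinitely divisible, which gives (ii); the exclusions $\kappa(\{0\}),\kappa(\{\infty\})<1$ again follow from $b_1<1$ and $b_k>0$.

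\textbf{Main obstacle.} The only delicate point is ensuring that the $n$-fold roots in Step 3 are all roots of the \emph{same} $\kappa$ (and that $\kappa$ lives on the compactified halfline $[0,\infty]$, not just on $[0,\infty)$). This is handled by invoking uniqueness of the Laplace transform of measures on $[0,\infty]$ evaluated on $\IN_0$, together with the convention $e^{-k\cdot\infty}=0$ for $k\geq1$ and $e^{0\cdot\infty}=1$, which lets the mass at $\infty$ be recovered from $b_0=1$ versus $\lim_{k\to\infty} b_k$. Everything else is formal manipulation with Lemma \ref{moments} and Hausdorff's theorem.
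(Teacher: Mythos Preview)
Your argument is correct and your Step~3 coincides with the paper's own (iv)$\Rightarrow$(ii). The route differs elsewhere: you tie (i) directly to (ii) by invoking Lemma~\ref{moments}, whereas the paper's appendix proof avoids Lemma~\ref{moments} for this link and instead connects (i) and (iv) through the geometric-distribution machinery---(i)$\Rightarrow$(iv) via Corollary~\ref{ext} and $\mathcal{G^{N,E}}\subseteq\mathcal{G^{W,E}}$, and (iv)$\Rightarrow$(i) via the elementary limit $\nabla^{j}\ln b_k=\lim_{n\to\infty}n\,\nabla^{j}\sqrt[n]{b_k}\ge 0$. Your path is shorter and stays purely within Lemma~\ref{moments} and Hausdorff; the paper's path is more self-contained (it does not lean on the Gnedin--Pitman lemma, whose original proof in the published version was retracted) and makes visible the link back to the multivariate geometric framework developed earlier.

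One small point worth tightening: Lemma~\ref{moments} as stated begins with a given random variable $X$ and characterizes when it is infinitely divisible; it does not literally assert that from $\{-\nabla\mu_k\}$ completely monotone one can \emph{construct} such an $X$. You use this existence direction in both Step~1 and Step~2. The paper itself endorses this reading (``the equivalence of (i) and (ii) is simply a reformulation of Lemma~\ref{moments}''), and the underlying Gnedin--Pitman result does supply existence, so this is not a genuine gap---but if you want the argument to stand on Lemma~\ref{moments} exactly as written, you should first note that $\{b_k\}\in\mathcal{M}_\infty$ (e.g.\ via your own Step~3 with $n=1$, or via the inclusion $\mathcal{LM}_\infty\subset\mathcal{M}_\infty$), produce $X$ by Hausdorff, and then apply Lemma~\ref{moments} to conclude $X$ is infinitely divisible.
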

\begin{proof} \red{The equivalence of $(i)$ and $(ii)$ is simply a reformulation of Lemma~\ref{moments} (in a context where we restrict ourselves to $b_1<1$, or, equivalently, to $\kappa$ being not concentrated at $0$). In the published manuscript, a nonsensical proof relying on the results of \cite{lorch83} was given. In \ref{sec:appc}, we provide an alternative proof to the one given in \cite{gnedin08}.}\qed
\end{proof}

In part $(ii)$ of Proposition \ref{extend}, we use the convention $\infty\cdot0=0$. The equivalence of $(i)$ and $(ii)$ constitutes an analogue of Hausdorff's moment problem. Results analogous to the equivalences of $(ii)$, $(iii)$, and $(iv)$ for the Hausdorff moment sequence $\{b_k\}^{\infty}_{k=0}\in\mathcal{M}_{\infty}$ have been obtained for normalized Stieltjes as well as normalized Hamburger moment sequences. In our study, the complete log-monotonicity arises naturally as a characterizing property of parameter sequences of the $\mathcal{G^{N,E}}$-law. Interestingly, it relates to the existing theory on logarithmically completely monotonic functions on $(0,\infty)$, for which, characterizations by completely monotonic functions, positive definite functions, or moment sequences resembling Proposition \ref{extend} are known. For an overview of the existing results on logarithmically completely monotonic functions and moment sequences, we refer to \cite{berg07,berg08}.       
\par
As an immediate consequence to the finite-dimensional exchangeable case, the next corollary states that $(1,\beta_1,\beta_2,\ldots)\in\mathcal{M}_{\infty}$ is \emph{necessary and sufficient} for (\ref{Geo_surv_new_w}) to define a survival function in any dimension $d\in\IN$.

\begin{corollary}[Analytical characterization of the survival function] \label{analytic}\text{ }
The multi-indexed sequence $\{\bar{F}_{n_1,\ldots,n_d}\}_{n_1,\ldots,n_d \in \IN_0}$, given by
\begin{gather*}
\bar{F}_{n_1,\ldots,n_d}=\prod_{k=1}^{d}\beta_k^{n_{(d-k+1)}-n_{(d-k)}}, \quad n_1,\ldots,n_d \in \IN_0,
\end{gather*} 
is a d-dimensional survival function for each $d\geq2$ if and only if $(1,\beta_1,\beta_2,\ldots)\in\mathcal{M}_{\infty}$. In this case, $\{\bar{F}_{n_1,\ldots,n_d}\}_{n_1,\ldots,n_d \in \IN_0}$ defines an infinitely extendible wide-sense geometric distribution.
\end{corollary}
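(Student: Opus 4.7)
The plan is to derive this corollary as an immediate consequence of Theorem~\ref{prop}, applied dimension by dimension, together with the reparameterization in Remark~\ref{repar} and the extendibility characterization in Corollary~\ref{ext}. There is no new probabilistic or combinatorial content here; the work is just to assemble the already-established pieces.

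For the direction $(\Leftarrow)$, I would start from the assumption $(1,\beta_1,\beta_2,\ldots)\in\mathcal{M}_{\infty}$. By definition of $\mathcal{M}_{\infty}$, every truncated prefix $(1,\beta_1,\ldots,\beta_d)$ satisfies $\nabla^{j}\beta_k\geq 0$ for all admissible $k,j$, hence lies in $\mathcal{M}_{d+1}$. Theorem~\ref{prop} then yields, for each $d\geq 2$, that $\{\bar F_{n_1,\ldots,n_d}\}$ is a $d$-dimensional exchangeable wide-sense geometric survival function. To upgrade this to infinite extendibility, I would note that the parameter sequence $(\beta_1,\ldots,\beta_d)$ determining the $d$-margin is literally the truncation of the next higher-dimensional parameter sequence (cf.\ Remark~\ref{repar}, where truncated versions of the parameter sequence determine lower-dimensional marginals). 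Thus the family of $d$-variate distributions is a consistent projective family, and by Corollary~\ref{ext}, the associated law is an element of $\mathcal{G^{W,E}}$.

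For the direction $(\Rightarrow)$, suppose the multi-indexed sequence defines a $d$-dimensional survival function for every $d\geq 2$. Fix an arbitrary $d$. By Theorem~\ref{prop}, $(1,\beta_1,\ldots,\beta_d)\in\mathcal{M}_{d+1}$. Since $d$ was arbitrary, the defining inequalities $\nabla^{j}\beta_k\geq 0$ (setting $\beta_0:=1$) hold for all $k\in\IN_0$ and all $j\in\IN_0$, because any such inequality involves only finitely many entries and is therefore already covered by Theorem~\ref{prop} for a sufficiently large $d$. Hence $(1,\beta_1,\beta_2,\ldots)\in\mathcal{M}_{\infty}$.

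The main (only) subtlety to double-check is that Theorem~\ref{prop} in each fixed dimension $d$ actually gives exactly the monotonicity conditions of $\mathcal{M}_{d+1}$ on the prefix $(1,\beta_1,\ldots,\beta_d)$, so that taking the union of these conditions over all $d$ recovers precisely the definition of $\mathcal{M}_{\infty}$; this is where the indexing convention $\beta_0=1$ matters. Once this is clear, the corollary follows in a couple of lines, and no analytic heavy lifting (no appeal to Hausdorff's moment problem or to Proposition~\ref{extend}) is needed.
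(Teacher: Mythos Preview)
Your proposal is correct and matches the paper's own proof, which consists of the single sentence ``The corollary follows from Theorem~\ref{prop}.'' Your write-up simply spells out in more detail why applying Theorem~\ref{prop} in every dimension $d$ yields exactly the $\mathcal{M}_{d+1}$-conditions whose union is $\mathcal{M}_{\infty}$, and why the resulting family is consistent and hence in $\mathcal{G^{W,E}}$.
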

\begin{proof} The corollary follows from Theorem \ref{prop}.\qed
\end{proof}

In view of de Finetti's theorem, extendible distributions are characterized by conditionally i.i.d.\ components. A stochastic representation in the sense of de Finetti's theorem can be achieved for both the $\mathcal{G^{N,E}}$ and $\mathcal{G^{W,E}}$ families.

\subsection{Stochastic representation in the sense of de Finetti's theorem}
\label{sec:ext,ciid}

Theorem \ref{thm_ciid} provides an alternative stochastic model with components that are i.i.d.\ conditioned on a non-decreasing random walk for both the $\mathcal{G^{N,E}}$ and $\mathcal{G^{W,E}}$ families. The $\mathcal{G^{N,E}}$-distribution corresponds to the proper subclass constructed by random walks with infinitely divisible increments. 

\begin{theorem}[\red{Esary and Marshall \cite{esary73}}] \label{thm_ciid}
Consider a probability space $(\Omega,\F,\IP)$ supporting a sequence $\{X_k\}_{k\in \IN}$ of i.i.d.\ $[0,\infty]$-valued random variables with $\IP(0<X_1\leq\infty)>0$. Furthermore, let $\{E_k\}_{k\in \IN}$ be an i.i.d.\ sequence of $Exp(1)$-distributed random variables, independent of $\{X_k\}_{k\in \IN}$. Define the random variables $\{\tau_k\}_{k \in \IN}$ via
\begin{gather*}
\tau_k:=\min\{n \in \IN\,:\,X_1+\ldots+X_n \geq E_k\},\quad k\in \IN.
\end{gather*}
\begin{enumerate}
\item[($\mathcal{W})$] For each $d \geq 2$, the random vector $(\tau_1,\ldots,\tau_d)$ has an exchangeable $d$-variate wide-sense geometric distribution with survival function (\ref{Geo_surv_new_w}), where
\begin{gather*}
\beta_k=\IE\big[e^{-k\,X_1}\big],\quad k\in \IN.
\end{gather*} 
\item[($\mathcal{N})$] If, additionally, the distribution of $X_1$ is infinitely divisible, then and only then, for each $d \geq 2$, the random vector $(\tau_1,\ldots,\tau_d)$ has an exchangeable narrow-sense geometric distribution with survival function (\ref{kurzi}), where
\begin{gather*}
b_k=\IE\big[e^{-k\,X_1}\big],\quad k\in \IN.
\end{gather*}   
\end{enumerate}
Moreover, every extendible wide-sense (resp.\ narrow-sense) geometric distribution can be constructed using the probabilistic model in $(\mathcal{W})$ (resp.\ in $(\mathcal{N})$). 
\end{theorem}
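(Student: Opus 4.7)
The plan is to explicitly compute the joint survival function of $(\tau_1,\ldots,\tau_d)$ under the given stochastic model and identify it with the closed-form expression of Theorem \ref{exch}, then use the already-established characterizations to handle both parts uniformly.

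First I would condition on the entire random walk $S_n := X_1 + \ldots + X_n$. Since the $E_k$ are i.i.d.\ $Exp(1)$ and independent of $\{X_k\}$, the event $\{\tau_k > n\}$ is equivalent to $\{E_k > S_n\}$, so conditionally on $\{X_j\}_{j\in\IN}$ the $\tau_k$ are independent with $\IP(\tau_k > n\,|\,\{X_j\}) = e^{-S_n}$. Hence
\begin{gather*}
\IP(\tau_1 > n_1,\ldots,\tau_d > n_d) = \IE\Big[\exp\Big(-\sum_{k=1}^d S_{n_k}\Big)\Big].
\end{gather*}
Ordering $n_{(0)}=0\leq n_{(1)}\leq\ldots\leq n_{(d)}$, I count how often each $X_j$ appears in $\sum_k S_{n_{(k)}}$: an $X_j$ with $n_{(i-1)}<j\leq n_{(i)}$ is summed exactly $d-i+1$ times. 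Using independence of the $X_j$ and reindexing by $k=d-i+1$, this gives
\begin{gather*}
\IP(\tau_1 > n_1,\ldots,\tau_d > n_d) = \prod_{k=1}^d \IE\big[e^{-kX_1}\big]^{n_{(d-k+1)}-n_{(d-k)}},
\end{gather*}
which has precisely the shape of $\bar{F}^{\mathcal{W}}_{n_1,\ldots,n_d}$ in \eqref{Geo_surv_new_w} with $\beta_k := \IE[e^{-kX_1}]$.

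For part $(\mathcal{W})$, I would next verify that $\{\beta_k\}_{k\in\IN_0}$ (with $\beta_0=1$) lies in $\mathcal{M}_{\infty}$. Setting $Y:=e^{-X_1}\in[0,1]$ (with the convention $e^{-\infty}=0$), the sequence $\beta_k=\IE[Y^k]$ is the Hausdorff moment sequence of the law of $Y$, hence completely monotone; the condition $\IP(0<X_1\leq\infty)>0$ guarantees $\beta_1<1$. By Corollary \ref{analytic} (equivalently, Theorem \ref{exch} $(\mathcal{W})$ together with Corollary \ref{ext}), this produces a valid infinitely extendible wide-sense geometric survival function. Conversely, given any law in $\mathcal{G^{W,E}}$ with parameters $\{\beta_k\}\in\mathcal{M}_{\infty}$, Hausdorff's moment problem supplies a $[0,1]$-valued random variable $Y$ realizing these moments; taking $X_1 := -\ln Y$ (with $-\ln 0 = \infty$) feeds back into the construction and reproduces the given distribution.

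For part $(\mathcal{N})$, I would invoke Proposition \ref{extend}: the sequence $b_k = \IE[e^{-kX_1}]$ lies in $\mathcal{LM}_{\infty}$ if and only if the distribution of $X_1$ on $[0,\infty]$ is infinitely divisible. Combining this equivalence with Corollary \ref{ext} $(\mathcal{N})$, and using the computation from the first paragraph, infinite divisibility of $X_1$ is exactly the condition under which the construction delivers an element of $\mathcal{G^{N,E}}$, and every member of $\mathcal{G^{N,E}}$ is obtained this way. The main obstacle is really the opening combinatorial identity (keeping track of the multiplicities in $\sum_k S_{n_{(k)}}$ and matching the exponents $n_{(d-k+1)}-n_{(d-k)}$); once that is done, the remainder is a direct translation between the moment-type characterizations already recorded in Hausdorff's theorem and Proposition \ref{extend}.
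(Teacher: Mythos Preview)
Your proposal is correct and follows essentially the same approach as the paper: condition on the random walk, use independence and the exponential law of the $E_k$ to obtain $\IE[\exp(-\sum_k S_{n_k})]$, factor via the i.i.d.\ increments into $\prod_k \beta_k^{n_{(d-k+1)}-n_{(d-k)}}$, then invoke Hausdorff's moment problem for the converse in $(\mathcal{W})$ and Proposition~\ref{extend} together with Corollary~\ref{ext} for $(\mathcal{N})$. Your multiplicity-counting presentation of the factorization is marginally more compact than the paper's explicit telescoping, but the underlying argument is identical.
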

\begin{proof} Refer to \ref{sec:appc}. \red{While we provide an alternative proof, the results found in \cite{esary73} are more general.}\qed
\end{proof}

In the situation of Theorem \ref{thm_ciid}, one says that the geometrically distributed $(\tau_1,\ldots,\tau_d)$ is \emph{extendible} to an infinite exchangeable sequence. By virtue of de Finetti's theorem, see \cite{finetti37} and \cite{aldous85}, p.\ 20 ff., the random variables $\{\tau_k\}_{k\in\IN}$ are i.i.d.\ conditioned on the $\sigma$-algebra generated by the sequence $\{X_k\}_{k \in \IN}$. On the one hand, an extendible geometric law is parameterized by the sequence of moments of a single $[0,1]$-valued random variable. On the other hand, a single (infinitely divisible) random variable is sufficient for a simple and efficient construction of an extendible wide-sense (resp. narrow-sense) geometric law. This constitutes a practically convenient approach to provide low-parametric families of high-dimensional geometric distributions.

\begin{example}[Three-variate extendible geometric law]\label{mextgd}\text{}\\
Consider the survival function (\ref{Geo_ind_exch}) of a three-variate exchangeable distribution with mutually independent geometric marginals parameterized by the sequence $(1,p,p^2,p^3)$ from Example \ref{mexchgd}. (\ref{Geo_ind_exch}) is infinitely extendible and can be constructed as in Theorem \ref{thm_ciid} $(\mathcal{N})$ using the degenerate distribution $X_1:=-\ln p$. A degenerate distribution is, trivially, infinitely divisible.

The survival function (\ref{Geo_hf_exch}), parameterized by $(1,p,p,p)$ and corresponding to the upper Fr\'echet-Hoeffding bound from Example \ref{mexchgd}, is infinitely extendible as well. It can be constructed as in Theorem \ref{thm_ciid} $(\mathcal{N})$ using the infinitely divisible distribution $\IP(X_1=0)=p=1-\IP(X_1=\infty)$.
\end{example}

\begin{remark}[Link to the Bernoulli sieve]
The construction of Theorem \ref{thm_ciid} involves a non-decreasing random walk with increments $\{X_k\}_{k \in \IN}$. However, there is another possible construction which is motivated by coin tosses. It is called the \emph{Bernoulli sieve} in \cite{gnedin04} and defines $\{\tau_k\}_{k \in \IN}$ as follows: 
\begin{gather*}
\tau_k:=\min\{n \in \IN\,:\,Z^{(n)}_k=1\},\quad k \in \IN,
\end{gather*}
where the sequences $\{Z^{(1)}_k\}_{k \in \IN},\,\{Z^{(2)}_k\}_{k \in \IN},\ldots$ are independent and for each $n \in \IN$, $\{Z^{(n)}_k\}_{k \in \IN}$ is a sequence of Bernoulli random variables conditioned on a random success probability $1-Y_n \in [0,1]$. It is assumed that the sequence $\{Y_n\}_{n \in \IN}$ is i.i.d.\ and $\IP(Y_1<1)>0$. The distribution of the $\{Y_k\}_{k \in \IN}$ is precisely $\eta$ from the proof of Theorem \ref{thm_ciid} above. In words, imagine a game with infinitely many players labeled by $k \in \IN$. The game has (possibly) infinitely many rounds, and in each round a certain number of players drops out. In round $n \in \IN$, a referee randomly picks a coin with probability of tails $1-Y_n$. Then, for each player the coin is thrown once and the player drops out if she gets tails. The random variable $\tau_k$ denotes the drop out round of player $k$. The condition $\IP(Y_1<1)>0$ guarantees that each player drops out eventually.
\par
Whereas the present article studies the distribution of $(\tau_1,\ldots,\tau_d)$, \cite{gnedin04} studies the distribution of $(P_1,\ldots,P_{K_d})$, where 
\begin{gather*}
K_d\hspace{-0.07cm}:=\hspace{-0.07cm}|\{\tau_1,\ldots,\tau_d\}|\hspace{-0.07cm}\hspace{-0.025cm}=\hspace{-0.07cm}\#\mbox{ of rounds in which one of the first }d \mbox{ players drops out},
\end{gather*}
and, with $\tau_{(1)}\leq\ldots\leq\tau_{(d)}$ denoting the order statistics of $(\tau_1,\ldots,\tau_d)$,
\begin{align*}
P_1 &\hspace{-0.07cm}:= \hspace{-0.07cm}\max\{j\hspace{-0.05cm}:\,\tau_{(j)}\hspace{-0.07cm}=\hspace{-0.07cm}\tau_{(1)}\}\hspace{-0.07cm}=\hspace{-0.07cm}\#\mbox{ of drop outs in the first of these rounds},\\
P_2 &\hspace{-0.07cm}:=\hspace{-0.07cm} \max\{j\hspace{-0.05cm}:\,\tau_{(P_1+j)}\hspace{-0.07cm}=\hspace{-0.07cm}\tau_{(P_1+1)}\}\hspace{-0.07cm}=\hspace{-0.07cm}\#\mbox{ of drop outs in the second of these rounds},\\
\vdots\\
P_{K_d} &\hspace{-0.07cm}:= \hspace{-0.07cm}\max\{j\hspace{-0.05cm}:\,\tau_{(P_{K_d-1}+j)}\hspace{-0.07cm}=\hspace{-0.07cm}\tau_{(P_{K_d-1}+1)}\}\hspace{-0.07cm}=\hspace{-0.07cm}\#\mbox{ of drop outs in the last of these rounds}.
\end{align*}
Notice that $P_1,\ldots,P_{K_d} \in \IN$ and $P_1+\ldots+P_{K_d}=d$, hence $(P_1,\ldots,P_{K_d})$ defines a \emph{random partition} of $d$, and vast literature can be found on the study of such objects. \cite{gnedin05} extend the Bernoulli sieve to a ``continuous'' case using L\'evy subordinators. Considering non-decreasing random walks as discrete equivalents of L\'evy subordinators explains their appearance in Theorem \ref{thm_ciid}. Furthermore, \cite{mai11} show that the extendible subclass of Marshall--Olkin exponential distributions can be characterized by L\'evy subordinators. In view of these references, the present article completes the picture, see Table \ref{tab:lim_mem}.   
\end{remark}

\vspace{-0.5cm}
\begin{table}[h]
\caption{Stochastic objects with limited memory.} %title of the table
\vspace{0.25cm}
\centering % centering table
\begin{tabular}{c|c|c|c} % creating two columns
\text{} & \small{\emph{non-decreasing}} & \small{\emph{random integer}} & \small{\emph{extendible}}\\
\text{} & \small{\emph{processes}} & \small{\emph{partitions}} & \small{\emph{multivariate laws}}\\ 
\hline
\text{} & \text{} & \text{} & \text{}\\  
\hspace{-0.2cm}\multirow{3}{*}{\small{\emph{continuous}}} & \small{\emph{L\'evy}} & \small{\emph{regenerative composition}} & \small{\emph{Marshall-Olkin}}\\
\text{  } & \small{\emph{subordinators}} & \small{\emph{structures, see}} & \small{\emph{distributions, see}}\\
\text{  } & \small{\emph{}} & \small\emph{{\cite{gnedin05}}} & \small{\emph{\cite{mai11}}}\\
%\text{} & \text{} & \text{} & \text{}\\  
\hline
\text{} & \text{} & \text{} & \text{}\\  
\hspace{-0.2cm}\multirow{3}{*}{\small{\emph{discrete}}} & \small{\emph{random walks}} & \small{\emph{the Bernoulli sieve,}} & \small{\emph{$\mathcal{G^{W,E}}$ and $\mathcal{G^{N,E}}$,}}\\
\text{} & \small{\emph{with non-negative}} & \small{\emph{see \cite{gnedin04}}} & \small{\emph{the present article}} \\
\text{} & \small{\emph{increments}} & \text{} & \text{} \\
\end{tabular} 
\label{tab:lim_mem}
\end{table}

\section{Dependence structure}
\label{sec:dep}

In this section, we study the dependence structure of the wide- and the narrow-sense geometric distributions discussed in Sections \ref{sec:mgd}, \ref{sec:exch}, and \ref{sec:ext}. Whereas the narrow-sense construction always leads to distributions with non-negatively correlated components, the more general wide-sense geometric distribution allows for negative correlations. Taking into account the specific form of the survival function, the explicit formulas for the correlation coefficients can be conveniently obtained by writing $\tau_i=\sum_{n=0}^{\infty}\ind_{\{\tau_i>n\}}$ for $\IN$-valued $\tau_i$, $i=1,2$, and computing the mixed second moment via
\begin{gather*}
\IE[\tau_1\tau_2]=\sum_{i\geq 0}\sum_{j\geq 0}\IP(\tau_1>i,\tau_2>j).
\end{gather*}           
The computation is tedious but straightforward, and therefore omitted. The formulas for the correlation coefficients for the wide- and the narrow-sense geometric distribution in dimension $d\geq2$, in both exchangeable and general cases, are provided in Lemma \ref{l:corr}. The conditions on the involved parameters are given in Theorem \ref{exch}, in the case of an exchangeable distribution, and in Definition \ref{def}, in the general case.

\begin{lemma}[Correlation coefficients]\label{l:corr}\text{} 
\vspace{-0.2cm}
\begin{enumerate}
\item[($\mathcal{N}$)] Let $d\geq2$ and let $(\tau_1,\ldots,\tau_d)\sim\mathcal{G^N}(\textbf{p})$. Then, for $n,m\in\{1,\ldots,d\}$, $n\neq m$, the correlation coefficient $Corr[\tau_n,\tau_m]$ is given by
$$Corr[\tau_n,\tau_m]=\frac{\sqrt{\prod_{I:(n\in I,m\notin I)\cup(n\notin I,m\in I)} p_I}\big(1-\prod_{I:n,m\in I} p_I\big)}{1-\prod_{I:(n\in I)\cup(m\in I)} p_I}.$$
If, additionally, the distribution of $(\tau_1,\ldots,\tau_d)$ is exchangeable, then 
$$Corr[\tau_n,\tau_m]=\frac{a_2-a_1}{1-a_1a_2}=\frac{b_2-b_1^2}{b_1(1-b_2)},\quad n,m\in\{1,\ldots,d\},\;n\neq m,$$
with $a_1$, $a_2$ and $b_1$, $b_2$ defined in (\ref{ak}) and (\ref{b's}), respectively.	
\item[($\mathcal{W}$)] Let $d\geq2$ and let $(\tau_1,\ldots,\tau_d)\sim\mathcal{G^W}(\tilde{\textbf{p}})$. Then, for $n,m\in\{1,\ldots,d\}$, $n\neq m$, such that $\sum_{I:n\notin I}\tilde{p}_I\neq0$ and $\sum_{I:m\notin I}\tilde{p}_I\neq0$, the correlation coefficient $Corr[\tau_n,\tau_m]$ is given by
$$Corr[\tau_n,\tau_m]=\frac{\sum_{I:n,m\notin I}\tilde{p}_I-\sum_{I:n\notin I}\sum_{J:m\notin J}\tilde{p}_I\tilde{p}_J}{\big(1-\sum_{I:n,m\notin I}\tilde{p}_I\big)\sqrt{\sum_{I:n\notin I}\sum_{J:m\notin J}\tilde{p}_I\tilde{p}_J}}.$$
If, additionally, the distribution of $(\tau_1,\ldots,\tau_d)$ is exchangeable, then 
$$Corr[\tau_n,\tau_m]=\frac{\beta_2-\beta_1^2}{\beta_1(1-\beta_2)},\quad n,m\in\{1,\ldots,d\},\;n\neq m,$$
with $\beta_1$, $\beta_2$ defined in (\ref{beta's}).	
\end{enumerate}
\end{lemma}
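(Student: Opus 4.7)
The plan is to compute $\IE[\tau_n \tau_m]$ via the identity
\[
\IE[\tau_n \tau_m] = \sum_{i,j \geq 0} \IP(\tau_n > i, \tau_m > j),
\]
which follows from writing $\tau_k = \sum_{n \geq 0} \ind_{\{\tau_k > n\}}$ for $\IN$-valued random variables. Since Definition~\ref{def} identifies each marginal as a geometric distribution with an explicit parameter, the univariate moments are already at hand, and Theorem~\ref{thm_a} supplies closed-form expressions for the bivariate survival functions. The whole proof therefore reduces to evaluating two double geometric-type sums and performing the algebra that isolates the correlation coefficient.

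For the narrow-sense case, I would group the factors of the survival function according to whether an index set $I$ meets $\{n,m\}$ in $\{n\}$, $\{m\}$, or $\{n,m\}$, writing
\[
A := \prod_{I:\,n\in I,\,m\notin I} p_I, \quad B := \prod_{I:\,m\in I,\,n\notin I} p_I, \quad C := \prod_{I:\,n,m\in I} p_I,
\]
so that $\IP(\tau_n > i, \tau_m > j) = A^i B^j C^{\max(i,j)}$ and in particular $\tau_n \sim \Geo(1 - AC)$, $\tau_m \sim \Geo(1 - BC)$. Splitting $\sum_{i,j}$ into the diagonal $i = j$, the strict-upper $i < j$, and the strict-lower $i > j$ pieces reduces each piece to a double geometric series whose sum is a rational function; combining them yields $\IE[\tau_n \tau_m] = (1 - ABC^2)/\bigl((1-ABC)(1-AC)(1-BC)\bigr)$. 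Subtracting $\IE[\tau_n]\IE[\tau_m] = 1/\bigl((1-AC)(1-BC)\bigr)$ and dividing by $\sqrt{\mathrm{Var}[\tau_n]\mathrm{Var}[\tau_m]} = C\sqrt{AB}/\bigl((1-AC)(1-BC)\bigr)$ collapses everything to $\sqrt{AB}\,(1 - C)/(1 - ABC)$; recognising $ABC = \prod_{I:\,(n\in I)\cup(m\in I)}p_I$ gives the formula displayed in the lemma.

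For the wide-sense case, Theorem~\ref{thm_a}$(\mathcal{W})$ specialises to $\IP(\tau_n > i, \tau_m > j) = a^{\min(i,j)} b^{(j-i)_+} c^{(i-j)_+}$, where $a := \sum_{I:\,n,m\notin I}\tilde p_I$, $b := \sum_{I:\,m\notin I}\tilde p_I$, $c := \sum_{I:\,n\notin I}\tilde p_I$. The change of variable $k = j-i$ (resp.\ $\ell = i-j$) now turns each half of the double sum into a product of two independent geometric series and yields $\IE[\tau_n \tau_m] = (1 - bc)/\bigl((1-a)(1-b)(1-c)\bigr)$. Together with $\tau_n \sim \Geo(1-c)$ and $\tau_m \sim \Geo(1-b)$, one obtains $\mathrm{Cov}(\tau_n,\tau_m) = (a - bc)/\bigl((1-a)(1-b)(1-c)\bigr)$ and hence the stated correlation $(a - bc)/\bigl((1-a)\sqrt{bc}\bigr)$.

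The exchangeable specialisations require no new work. In the narrow-sense exchangeable case, matching $\bar F^{\mathcal{N}}_{n_1,n_2} = a_1^{\max(n_1,n_2)} a_2^{\min(n_1,n_2)}$ from Theorem~\ref{exch}$(\mathcal{N})$ against $A^{n_1} B^{n_2} C^{\max(n_1,n_2)}$ forces $A = B = a_2$ and $C = a_1/a_2$, so $ABC = a_1 a_2$ and the general formula collapses to $(a_2-a_1)/(1-a_1 a_2)$; the $(b_1,b_2)$-form follows from $b_1 = a_1$, $b_2 = a_1 a_2$. For the wide-sense exchangeable case, the bivariate marginal derived via Remark~\ref{repar}$(\mathcal{W})$ gives $a = \beta_2$ and $b = c = \beta_1$, producing the claimed $(\beta_2 - \beta_1^2)/\bigl(\beta_1(1-\beta_2)\bigr)$. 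The only delicate point throughout is avoiding double-counting at $i = j$ when partitioning the sum and keeping track of the product-over-subsets indexing in the narrow-sense formulas; apart from this bookkeeping the entire derivation is mechanical.
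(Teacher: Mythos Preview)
Your proposal is correct and follows precisely the approach the paper itself indicates: the paper's proof is the single line ``A tedious but straightforward computation provides the results,'' preceded by the hint that one should write $\tau_i=\sum_{n\geq 0}\ind_{\{\tau_i>n\}}$ and compute $\IE[\tau_n\tau_m]=\sum_{i,j\geq 0}\IP(\tau_n>i,\tau_m>j)$. You have simply carried out that computation in full, with the correct intermediate expressions $\IE[\tau_n\tau_m]=(1-ABC^2)/\bigl((1-ABC)(1-AC)(1-BC)\bigr)$ in the narrow-sense case and $(1-bc)/\bigl((1-a)(1-b)(1-c)\bigr)$ in the wide-sense case, and the exchangeable specialisations via the bivariate marginals are handled correctly.
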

\begin{proof} A tedious but straightforward computation provides the results.\qed
\end{proof}

In part $(\mathcal{W})$ of Lemma \ref{l:corr}, we exclude the degenerate case $\sum_{I:n\notin I}\tilde{p}_I=0$, $n\in\{1,\ldots,d\}$, corresponding to $\tau_n=1$ almost surely. For exchangeable $(\tau_1,\ldots,\tau_d)$, this translates into $\beta_1\neq0$.  

\begin{example}[Geometric in the wide-, but not in the narrow sense]\label{ex:nw2}    
Consider again the sequence $(\beta_0,\beta_1,\beta_2)=(1,\frac{1}{2},\frac{1}{5})\in\mathcal{M}_3$ from Examples \ref{mon} and \ref{nw1}, defining parameters of a bivariate $\mathcal{G^{W,X}}$-distribution. With Lemma \ref{l:corr} $(\mathcal{W})$, we compute
\begin{gather*}
Corr[\tau_1,\tau_2]=\frac{\beta_2-\beta_1^2}{\beta_1(1-\beta_2)}=-\frac{1}{8}.
\end{gather*} 
\end{example}

In the case of a wide-sense geometric distribution, the correlation coefficient between every two components is bounded from below by $-0.5$ in any dimension $d\geq2$. The correlation coefficient $Corr[\tau_n,\tau_m]=-0.5$ is obtained in the case $\tilde{p}_I=2^{1-d}$ for all $I\subseteq\{1,\ldots,d\}$ s.t.\ $n\in I,\,m\notin I$ or $m\in I,\,n\notin I$, and $\tilde{p}_I=0$, otherwise. Note that due to $|\{I:n\in I,\,m\notin I\}|=2^{d-2}$, these parameters satisfy the conditions of Definition \ref{def} $(\mathcal{W})$. 

In the case of an exchangeable $d$-variate distribution, the correlation coefficient is bounded from below by $-1/d$ for any $d\geq2$. This can be verified by observing that for $\beta_1\neq0$, the correlation coefficient 
$$Corr[\tau_n,\tau_m]=\frac{\beta_2-\beta_1^2}{\beta_1(1-\beta_2)},\quad n,m=1,\ldots,d,$$
is strictly increasing in $\beta_2$ and strictly decreasing in $\beta_1$. Setting $\beta_2=0$, the condition $(1,\beta_1,\ldots,\beta_{d})\in\mathcal{M}_{d+1}$ gives $\beta_k=0$ for all $k=2,\ldots,d$ and $(1-d\beta_1)\geq0$. Hence, the minimum is attained at $(\beta_0,\ldots,\beta_{d})=(1,1/d,0,\ldots,0)\in\mathcal{M}_{d+1}$. Recall that the lower bound for the correlation coefficient between the components of an arbitrary exchangeable $d$-variate distribution is given by $-1/(d-1)$ for any $d\geq2$, see \cite{kingman78} or \cite{aldous85}, p.\ 8.

\begin{remark}[Correlations induced by the Fr\'echet-Hoeffding bounds]
%\vspace{-0.2cm}\\
Let $\tau_1\sim Geo(1-p_1)$ and $\tau_2\sim Geo(1-p_2)$, $p_1,p_2\in(0,1)$. Assume that the dependence between $\tau_1$ and $\tau_2$ is now given by the lower Fr\'echet-Hoeffding bound, i.e.
$$\IP(\tau_1\leq n_1,\tau_2\leq n_2)=\max\Big\{0,1-p_1^{n_1}-p_2^{n_2}\Big\}.$$
Then, the correlation coefficient $Corr[\tau_1,\tau_2]$ is given by
\begin{gather*}
Corr[\tau_1,\tau_2]=\frac{(1\hspace{-0.07cm}-\hspace{-0.07cm}p_1)(1\hspace{-0.07cm}-\hspace{-0.07cm}p_2)}{\sqrt{p_1p_2}}\Big(\sum\limits_{i\geq 0}\sum\limits_{j\geq0}\max\big\{0,p_1^{i}\hspace{-0.07cm}+\hspace{-0.07cm}p_2^{j}\hspace{-0.07cm}-\hspace{-0.07cm}1\big\}\hspace{-0.07cm}-\hspace{-0.07cm}\frac{1}{(1\hspace{-0.07cm}-\hspace{-0.07cm}p_1)(1\hspace{-0.07cm}-\hspace{-0.07cm}p_2)}\Big).
\end{gather*}
In the special case $p_1+p_2\leq1$, the expression for the correlation coefficient simplifies to
\begin{gather*}
Corr[\tau_n,\tau_m]=-\sqrt{p_1p_2}.
\end{gather*}
Recall that correlations induced by the lower Fr\'echet-Hoeffding bound are always non-positive. In our case of geometric marginals, we observe $Corr[\tau_1,\tau_2]\rightarrow 0$ as $p_1\rightarrow0$, $p_2\rightarrow0$, and $Corr[\tau_1,\tau_2]\rightarrow -1$ as $p_1\rightarrow1$, $p_2\rightarrow1$. It is worth noticing that a distribution with non-degenerate geometric marginals and the lower Fr\'echet-Hoeffding copula is not geometric in the wide-sense by Theorem \ref{Uniq}.

Now let the dependence between $\tau_k\sim Geo(1-p_k)$, $p_k\in(0,1)$, $k\in\{1,\ldots,d\}$, be given by the upper Fr\'echet-Hoeffding bound, i.e.
$$\IP(\tau_1\leq n_1,\ldots,\tau_d\leq n_d)=\min\Big\{1-p_1^{n_1},\ldots,1-p_d^{n_d}\Big\}.$$
Then, for $n,m\in\{1,\ldots,d\}$, $n\neq m$, the correlation coefficient $Corr[\tau_n,\tau_m]$ is given by
\begin{gather*}
Corr[\tau_n,\tau_m]=\frac{(1\hspace{-0.07cm}-\hspace{-0.07cm}p_n)(1\hspace{-0.07cm}-\hspace{-0.07cm}p_m)}{\sqrt{p_n p_m}}\Big(\sum\limits_{i\geq 0}\sum\limits_{j\geq0}\min\big\{p_n^{i},p_m^{j}\big\}\hspace{-0.07cm}-\hspace{-0.07cm}\frac{1}{(1\hspace{-0.07cm}-\hspace{-0.07cm}p_n)(1\hspace{-0.07cm}-\hspace{-0.07cm}p_m)}\Big).
\end{gather*}
Recall that correlations induced by the upper Fr\'echet-Hoeffding bound are always non-negative. In the homogeneous case $p_1=p_k$ for all $k=2,\ldots,d$, we obtain $Corr[\tau_n,\tau_m]=1$ for all $p_1\in(0,1)$. More precisely,  
\begin{gather*}
\IE[\tau_n\tau_m] =\sum_{i\geq 0}\sum_{j\geq0}\min\{p_1^i,p_1^j\}=\sum_{i\geq 0}\Big(\sum_{j=0}^ip_1^i+\sum_{j=i+1}^{\infty}p_1^j\Big)=\frac{1+p_1}{(1-p_1)^2},  
\end{gather*}
and, hence, 
\begin{gather*}
Corr[\tau_n,\tau_m]=\Big(\frac{1+p_1}{(1-p_1)^2}-\frac{1}{(1-p_1)^2}\Big)\Big(\frac{(1-p_1)^2}{p_1}\Big)=1.
\end{gather*}
Moreover, in this case, the corresponding distribution is an exchangeable narrow-sense geometric distribution with survival function 
\begin{gather*}
\bar{F}_{n_1,\ldots,n_d}=1+\sum_{i=1}^d\sum_{\stackrel{\emptyset\neq I\subseteq\{1,\ldots,d\}}{|I|=i}}(-1)^i\IP(\tau_j\leq n_j,\,j\in I)=p_1^{n_{(d)}}.
%&\ =1+\sum_{i=1}^d(-1)^i\binom{d}{i}-\sum_{i=1}^d\sum_{\stackrel{\emptyset\neq I\subseteq\{1,\ldots,d\}}{|I|=i}}(-1)^i p_1^{\min_{j\in I}\{n_j\}}\\
%&\ =\sum_{i=1}^d\sum_{j=0}^{d-i}(-1)^j\binom{d-i}{j}p_1^{n_{(i)}}
\end{gather*}
\end{remark}

The following example shows that the set of all bivariate $\mathcal{G^{N}}$-distributions is precisely the set of all bivariate $\mathcal{G^{W}}$-distributions with non-negative correlations. 

\begin{example}[Non-negative correlations $\Rightarrow\mathcal{G^{N}}$ in $d=2$]\label{ex:corr2}
In view of Theorems \ref{thm_a} and \ref{Uniq}, every $(\tau_1,\tau_2)\sim\mathcal{G^{N}}$ follows a $\mathcal{G^{W}}$-distribution with non-negatively correlated components. We show that the converse also holds. To this end, let $(\tau_1,\tau_2)\sim\mathcal{G^{W}}$ with $Corr[\tau_1,\tau_2]\geq0$. By Theorem \ref{thm_a} $(\mathcal{W})$, 
\begin{gather*}
\IP(\tau_1>i,\tau_2>j)=\begin{cases}
\big(\tilde{p}_{\emptyset}+\tilde{p}_{\{2\}}\big)^{i-j}\,\tilde{p}_{\emptyset}^j, & \text{ if $i\geq j$,}\\
\big(\tilde{p}_{\emptyset}+\tilde{p}_{\{1\}}\big)^{j-i}\,\tilde{p}_{\emptyset}^i, & \text{ if $i<j$,}
\end{cases}
\end{gather*} 
with parameters satisfying $\sum_{I\subseteq\{1,2\}}\tilde{p}_I=1$, $\sum_{I:k\notin I}\tilde{p}_I<1$, for $k=1,2$, and $\tilde{p}_{\emptyset}-\big(\tilde{p}_{\emptyset}+\tilde{p}_{\{1\}}\big)\big(\tilde{p}_{\emptyset}+\tilde{p}_{\{2\}}\big)\geq0$, where the last condition is due to non-negativity of the correlation coefficient, see Lemma \ref{l:corr} $(\mathcal{W})$. For $\tilde{p}_{\emptyset}>0$, define
\begin{gather*}
p_{\{1\}}:=\frac{\tilde{p}_{\emptyset}}{\tilde{p}_{\emptyset}+\tilde{p}_{\{1\}}},\quad p_{\{2\}}:=\frac{\tilde{p}_{\emptyset}}{\tilde{p}_{\emptyset}+\tilde{p}_{\{2\}}},\quad p_{\{1,2\}}:=\frac{\big(\tilde{p}_{\emptyset}+\tilde{p}_{\{1\}}\big)\big(\tilde{p}_{\emptyset}+\tilde{p}_{\{2\}}\big)}{\tilde{p}_{\emptyset}}.
\end{gather*}  
$p_{\{1\}}$, $p_{\{2\}}$, and $p_{\{1,2\}}$ are well-defined and satisfy the conditions of Definition \ref{def} $(\mathcal{N})$. Further,
\begin{gather*}
\IP(\tau_1>i,\tau_2>j)=\begin{cases}
\big(p_{\{1\}}p_{\{1,2\}}\big)^{i}\,p_{\{2\}}^j, & \text{ if $i\geq j$,}\\
\big(p_{\{2\}}p_{\{1,2\}}\big)^{j}\,p_{\{1\}}^i, & \text{ if $i<j$.}
\end{cases}
\end{gather*} 
Hence, by Theorem \ref{thm_a} $(\mathcal{N})$, $(\tau_1,\tau_2)\sim\mathcal{G^{N}}(p_{\{1\}},p_{\{2\}},p_{\{1,2\}})$.
\end{example}
 
On the contrary, in dimension $d\geq 3$, the set of all $d$-variate $\mathcal{G^{N}}$-distributions is a proper subset of all $d$-variate $\mathcal{G^{W}}$-distributions with non-negative correlations.

\begin{example}[Non-negative correlations $\nRightarrow\mathcal{G^{N}}$ in $d\geq3$]\label{ex:corr3}
Let $d\geq3$ and let $X$ be Bernoulli distributed with success probability $1-p=0.9$. For $k=0,\ldots,d$, define $\beta_k:=\IE[e^{-k X}]=p+(1-p)e^{-k}$. By Theorem \ref{thm_ciid} $(\mathcal{W})$, $(\beta_0,\ldots,\beta_{d})$ defines parameters of a $d$-variate $\mathcal{G^{W,E}}$-distribution. Using the formula for correlation coefficients from Lemma \ref{l:corr} $(\mathcal{W})$, 
\begin{gather*}
Corr[\tau_n,\tau_m]=\frac{\beta_2-\beta_1^2}{\beta_1(1-\beta_2)}=\frac{0.1+0.9e^{-2}-(0.1+0.9e^{-1})^2}{(0.1+0.9e^{-1})(0.9-0.9e^{-2})}\approx0.1072>0, 
\end{gather*} 
for $n,m\in\{1,\ldots,d\}$, $n\neq m$. However, since 
$$\nabla^3\ln\beta_0=\ln\frac{\beta_2^3}{\beta_1^3\beta_3}\approx-0.06<0,$$ 
it follows that $(\beta_0,\ldots,\beta_{d})\notin\mathcal{LM}_{d+1}$, and, hence, in view of Theorem \ref{exch} $(\mathcal{N})$, $(\beta_0,\ldots,\beta_{d})$ does not define parameters of a $d$-variate $\mathcal{G^{N,X}}$-distribution. In this context, note that the Bernoulli distribution is not infinitely divisible, and, by Theorem \ref{thm_ciid} $(\mathcal{N})$, $(\beta_0,\ldots,\beta_{d})$ defines parameters of a $d$-variate $\mathcal{G^{N,X}}$-distribution for each $d\geq2$ if and only if the distribution of $X$ is infinitely divisible.
\end{example}

\begin{remark}[Relationships between subfamilies of the geometric law]
The relationships between exchangeable subfamilies of the multivariate geometric law with limited memory can be summarized as follows:
\begin{enumerate}
\item [(i)] $\mathcal{G^{N,E}}\subseteq\mathcal{G^{N,X}}\subsetneq\mathcal{G^{W,X}}\;\;\text{and}\;\;\mathcal{G^{N,E}}\subseteq\mathcal{G^{W,E}}\subsetneq\mathcal{G^{W,X}}$.\\
The inclusion $\mathcal{G^{N,X}}\subsetneq\mathcal{G^{W,X}}$ is proper in every dimension $d\geq2$ by Example \ref{nw1}. The inclusion $\mathcal{G^{W,E}}\subsetneq\mathcal{G^{W,X}}$ is proper in every dimension $d\geq2$ by Example \ref{mon}.
\item [(ii)] In addition, for exchangeable bivariate geometric distributions:
$$\mathcal{G^{N,X}}=\mathcal{G^{N,E}}=\mathcal{G^{W,E}}\subsetneq\mathcal{G^{W,X}}.$$
More precisely, by Hausdorff's moment problem, $(1,\beta_1,\beta_2)$ is extendible to an infinite sequence $(1,\beta_1,\beta_2,\ldots)\in\mathcal{M}_{\infty}$ if and only if $\beta_k=\IE[\tau^k]$, $k=1,2$, for a $[0,1]$-valued random variable $\tau$. Then, by Jensen's inequality $\beta_2-\beta_1^2\geq0$. Hence, every $\mathcal{G^{W,E}}$-distribution has non-negatively correlated components, see Lemma \ref{l:corr} $(\mathcal{W})$. It follows by Example \ref{ex:corr2} that $\mathcal{G^{W,E}}=\mathcal{G^{N,E}}\subseteq\mathcal{G^{N,X}}$ in dimension $d=2$. It remains to justify that $\mathcal{G^{N,E}}\supseteq\mathcal{G^{N,X}}$, meaning that each sequence $(1,b_1,b_2)\in\mathcal{LM}_{3}$ is infinitely extendible to a sequence $(1,b_1,b_2,\ldots)\in\mathcal{LM}_{\infty}$. Let $(1,b_1,b_2)\in\mathcal{LM}_{3}$. On the one hand, by 3-log-monotonicity of $(1,b_1,b_2)$, it holds that $b_1^2\leq b_2\leq b_1<1$. On the other hand, by Proposition \ref{extend}, $(1,b_1,b_2)\in\mathcal{LM}_{3}$ is infinitely extendible to a sequence $(1,b_1,b_2,\ldots)\in\mathcal{LM}_{\infty}$ if and only if $(1,b^r_1,b^r_2)$ is infinitely extendible to a sequence $(1,b^r_1,b^r_2,\ldots)\in\mathcal{M}_{\infty}$ for each fixed $r>0$. Finally, Hankel's criterion for infinite extendibility of $d$-monotone sequences, see \cite{dette97}, p.\ 20, implies that $(1,b^r_1,b^r_2)$ is infinitely extendible to $(1,b^r_1,b^r_2,\ldots)\in\mathcal{M}_{\infty}$ if and only if $b_1^{2r}\leq b_2^r\leq b_1^r\leq1$ for each fixed $r>0$. This gives the assertion.           
\end{enumerate}
\end{remark}

In dimension $d\geq3$, none of the inclusions $\mathcal{G^{W,E}}\subseteq\mathcal{G^{N,X}}$ and $\mathcal{G^{N,X}}\subseteq\mathcal{G^{W,E}}$ holds. On the one hand, $\mathcal{G^{W,E}}\nsubseteq\mathcal{G^{N,X}}$ by Example \ref{ex:corr3}. On the other hand, it can be shown that not every $(1,b_1,b_2,b_3)\in\mathcal{LM}_{4}$ is extendible to a sequence $(1,b_1,b_2,b_3,\ldots)\in\mathcal{M}_{\infty}$.

The class of exchangeable narrow-sense geometric distributions possesses a strong positive dependence property, namely the multivariate right tail increasing (MRTI) dependence. MRTI implies, in particular, the positive upper orthant increasing ratio dependence, the positive upper orthant dependence, and, hence, non-negative correlations. For a detailed discussion of the notions of multivariate positive dependence, we refer to \cite{block81,tong94,colangelo05}. For the convenience of the reader, we recall that a random vector $(\tau_1,\ldots,\tau_d)$ with values in $\IN^d$ is said to be MRTI if for all permutations $\pi$: $\{1,\ldots,d\}\rightarrow\{1,\ldots,d\}$
\begin{gather*}
\IP(\tau_{\pi(i)}>n_i \,|\,\tau_{\pi(1)}>n_1,\ldots,\tau_{\pi(i-1)}>n_{i-1})
\end{gather*} 
is increasing in $n_1,\ldots,n_{i-1}$ for all $n_i$, $i\in\{2,\ldots,d\}$. 

We show in Theorem \ref{mrti_exch} that
$$\frac{a_k}{a_{k+1}}=\frac{b_k^2}{b_{k-1}b_{k+1}}\leq1\quad (\text{resp.\ }\frac{\beta_k^2}{\beta_{k-1}\beta_{k+1}}\leq1),\quad k=1,\ldots,d-1,$$
is a necessary and sufficient condition on the parameters of a $d$-dimensional $\mathcal{G^{N,X}}$- (resp.\ $\mathcal{G^{W,X}}$-) survival function, given by (\ref{Geo_surv_new}) (resp.\ (\ref{Geo_surv_new_w})), to be MRTI. 
In view of Theorem \ref{exch} $(\mathcal{N})$, it holds that $\{a_k\}_{k=1}^d\in\mathcal{SM}_d$ for any set of parameters $\{a_k\}_{k=1}^d$ of a $d$-variate $\mathcal{G^{N,X}}$-distribution. Hence, $\{a_k\}_{k=1}^d$ is non-decreasing, and it follows that every $d$-variate $\mathcal{G^{N,X}}$-distribution is MRTI positively dependent. The next theorem summarizes these results.

\begin{theorem}[MRTI property for exchangeable distributions]\label{mrti_exch}\text{}
\vspace{-0.15cm}
\begin{enumerate}
	\item[($\mathcal{N}$)] Every $\mathcal{G^{N,X}}$-distributed $(\tau_1,\ldots,\tau_d)$ is MRTI positively dependent. 
	\item[($\mathcal{W}$)] A $\mathcal{G^{W,X}}$-distributed $(\tau_1,\ldots,\tau_d)$ is MRTI positively dependent if and only if the parameters $\{\beta_k\}_{k=0}^d$ of its corresponding survival function $\bar{F}^{\mathcal{W}}$, given by (\ref{Geo_surv_new_w}), satisfy $\frac{\beta_k^2}{\beta_{k-1}\beta_{k+1}}\leq1$ for all $k=1,\ldots,d-1$.
\end{enumerate}
\end{theorem}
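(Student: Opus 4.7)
The plan is to rewrite the exchangeable wide-sense survival function in a layer-cake form indexed by level, from which the relevant conditional probability falls out as a product of adjacent ratios $\beta_{s+1}/\beta_s$. This reduces MRTI to a log-concavity-type condition on $\{\beta_k\}$.

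First I would establish the layer-cake identity
$$\bar F^i(n_1,\ldots,n_i)=\prod_{\ell=0}^{\infty}\beta_{N_\ell},\qquad N_\ell:=\#\{j\le i:n_j>\ell\},$$
with the convention $\beta_0:=1$. This is an immediate regrouping of Theorem \ref{exch}\,$(\mathcal{W})$: letting $m_1\le\ldots\le m_i$ be the order statistics of $(n_1,\ldots,n_i)$, every integer $\ell\in[m_{i-k},m_{i-k+1})$ satisfies $N_\ell=k$, so $\beta_k$ contributes exactly $m_{i-k+1}-m_{i-k}$ times. Since the $i$-dimensional marginals of a $d$-variate exchangeable wide-sense geometric distribution are themselves exchangeable wide-sense geometric with the same leading parameters $\beta_1,\ldots,\beta_i$ (Remark \ref{repar}), the same formula governs every marginal.

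Next I would exploit exchangeability to reduce MRTI to the identity permutation and divide the layer-cake forms of $\bar F^i(n_1,\ldots,n_i)$ and $\bar F^{i-1}(n_1,\ldots,n_{i-1})$. The level-counts agree at every $\ell\ge n_i$ and differ by exactly one at every $\ell<n_i$, so (assuming for the moment strictly positive $\beta_k$'s)
$$\IP(\tau_i>n_i\mid\tau_1>n_1,\ldots,\tau_{i-1}>n_{i-1})=\prod_{\ell=0}^{n_i-1}\frac{\beta_{s_\ell+1}}{\beta_{s_\ell}},\qquad s_\ell:=\#\{j\le i-1:n_j>\ell\}.$$
Replacing $n_j$ (with $j<i$) by $n_j+1$ leaves every $s_\ell$ unchanged except at the single level $v$ equal to the old $n_j$, where $s_v$ jumps up by one. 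For $v\ge n_i$ the product is unaffected; for $v<n_i$ the factor at $\ell=v$ changes from $\beta_{s+1}/\beta_s$ to $\beta_{s+2}/\beta_{s+1}$ with $s:=s_v$, and the conditional probability is therefore non-decreasing in $n_j$ exactly when $\beta_s\beta_{s+2}\ge\beta_{s+1}^2$. As $i$ ranges over $\{2,\ldots,d\}$ and the other entries vary, $s$ attains every integer in $\{0,\ldots,d-2\}$, giving the necessary and sufficient condition $\beta_k^2/(\beta_{k-1}\beta_{k+1})\le 1$ for $k=1,\ldots,d-1$. This settles $(\mathcal{W})$.

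Finally I would derive $(\mathcal{N})$ as a corollary. The narrow-sense survival function (\ref{Geo_surv_new}) has the form of (\ref{Geo_surv_new_w}) with $\beta_k=b_k$, and Theorem \ref{exch}\,$(\mathcal{N})$ places $(1,b_1,\ldots,b_d)\in\mathcal{LM}_{d+1}$. In particular, $\nabla^2\ln b_{k-1}\ge 0$ for $k=1,\ldots,d-1$, i.e., $b_{k-1}b_{k+1}\ge b_k^2$, so the criterion from $(\mathcal{W})$ applies and MRTI holds automatically. I expect the layer-cake step to be the main obstacle: it is what converts the order-statistic bookkeeping in (\ref{Geo_surv_new_w}) into a product amenable to single-coordinate variation, and without it the effect of perturbing one $n_j$ on the conditional survival is hard to track. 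The degenerate case where some $\beta_k$ vanishes (whence all subsequent $\beta_{k'}$ vanish by monotonicity of $\{\beta_k\}$) is handled by a brief separate inspection of the zero-probability conditioning, but poses no structural difficulty.
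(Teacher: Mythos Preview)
Your argument is correct and takes a genuinely different route from the paper. The paper works in the $a_k$-parameterization, compares two arbitrary tuples $n_j\ge\tilde n_j$ directly, and expresses the ratio of the two conditional probabilities as $\prod_{i=1}^{k-1}(a_{i+1}/a_i)^{T_i}$ for certain integers $T_i$ built from differences of partial sums of reverse order statistics; the bulk of the work is a case analysis (on where $n_k$ sits in the two sorted lists) showing every $T_i\ge 0$, and necessity is obtained by an explicit choice of $(n,\tilde n)$ isolating a single ratio $a_i/a_{i-1}$. Your layer-cake decomposition $\bar F=\prod_{\ell\ge 0}\beta_{N_\ell}$ sidesteps all of that bookkeeping: it turns the conditional probability into $\prod_{\ell<n_i}\beta_{s_\ell+1}/\beta_{s_\ell}$, a unit increment in one $n_j$ changes exactly one factor, and both sufficiency and necessity fall out immediately from the log-concavity inequality $\beta_{s}\beta_{s+2}\ge\beta_{s+1}^2$. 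What your approach buys is a cleaner, essentially one-line monotonicity argument with no order-statistic case analysis; what the paper's approach buys is an explicit closed-form identity for the ratio at arbitrary $(n,\tilde n)$, not just single increments.

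One small remark on part $(\mathcal{N})$: the implication ``$(1,b_1,\ldots,b_d)\in\mathcal{LM}_{d+1}$, in particular $\nabla^2\ln b_{k-1}\ge 0$'' is true but not literally immediate from the definition of $\mathcal{LM}_{d+1}$, which only lists the top-order differences. It follows via the equivalence with $\{a_k\}\in\mathcal{SM}_d$ in Theorem~\ref{exch}($\mathcal{N}$): $d$-monotonicity of $\{-\ln a_k\}$ implies $\nabla^1(-\ln a_k)\ge 0$, i.e.\ $a_k\le a_{k+1}$, which is $\nabla^2\ln b_{k-1}\ge 0$. This is exactly how the paper closes $(\mathcal{N})$ as well, so you are in agreement there; just make that intermediate step explicit.
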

\begin{proof} Refer to \ref{sec:appd}.\qed
\end{proof}

On the one hand, exchangeable wide-sense geometric distributions possess an MRTI positive dependence structure under the conditions given in Theorem \ref{mrti_exch} $(\mathcal{W})$. On the other hand, as illustrated in Example \ref{ex:nw2}, exchangeable wide-sense geometric distributions in general allow for negative correlations. Extendible (narrow- and wide-sense) geometric distributions are always MRTI. This is the content of Corollary \ref{mrti_ext}.

\begin{corollary}[MRTI property for extendible distributions]\label{mrti_ext}
Every extendible geometric distribution satisfying the local LM property is MRTI positively dependent. 
\end{corollary}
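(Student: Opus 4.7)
The plan is to reduce the corollary to Theorem~\ref{mrti_exch} by considering the two cases separately. For the narrow-sense extendible case, there is nothing to do: since $\mathcal{G^{N,E}}\subseteq\mathcal{G^{N,X}}$, the claim is immediate from Theorem~\ref{mrti_exch}~$(\mathcal{N})$. The content of the corollary therefore lies entirely in the wide-sense extendible case $\mathcal{G^{W,E}}$, where we must verify the condition $\beta_k^2\leq\beta_{k-1}\beta_{k+1}$ for $k=1,\ldots,d-1$ required by Theorem~\ref{mrti_exch}~$(\mathcal{W})$.

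For this, I would invoke Corollary~\ref{ext}~$(\mathcal{W})$, which tells us that $(1,\beta_1,\ldots,\beta_d)$ extends to an infinite sequence $\{\beta_k\}_{k\in\IN_0}\in\mathcal{M}_\infty$. By Hausdorff's moment problem (as referenced in Section~\ref{sec:ext}), this is equivalent to the existence of a $[0,1]$-valued random variable $\tau$ with
\begin{gather*}
\beta_k=\IE[\tau^k],\qquad k\in\IN_0.
\end{gather*}
This representation is the key ingredient, after which the Cauchy--Schwarz inequality yields
\begin{gather*}
\beta_k^2=\bigl(\IE[\tau^{(k-1)/2}\cdot\tau^{(k+1)/2}]\bigr)^2\leq\IE[\tau^{k-1}]\,\IE[\tau^{k+1}]=\beta_{k-1}\,\beta_{k+1}
\end{gather*}
for every $k\geq1$, which is precisely the sufficient condition in Theorem~\ref{mrti_exch}~$(\mathcal{W})$. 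The MRTI property for every $\mathcal{G^{W,E}}$-distribution then follows.

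There is essentially no obstacle in this argument: both cases reduce mechanically to results already proven in the paper, and the only nontrivial step is the Cauchy--Schwarz moment inequality, which is standard. One could alternatively express the same idea via the stochastic representation of Theorem~\ref{thm_ciid}~$(\mathcal{W})$, writing $\beta_k=\IE[e^{-kX_1}]$ and noting that $e^{-X_1}$ takes values in $[0,1]$; the Cauchy--Schwarz step is identical, so either route works equally well.
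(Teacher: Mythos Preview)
Your proof is correct and follows essentially the same route as the paper: invoke Corollary~\ref{ext} to obtain $\{\beta_k\}_{k\in\IN_0}\in\mathcal{M}_\infty$, apply Hausdorff's moment problem to write $\beta_k=\IE[\tau^k]$, and conclude the log-convexity condition $\beta_k^2\leq\beta_{k-1}\beta_{k+1}$ via Cauchy--Schwarz, which feeds directly into Theorem~\ref{mrti_exch}~$(\mathcal{W})$. The only cosmetic difference is that the paper does not separate out the narrow-sense case, since $\mathcal{G^{N,E}}\subseteq\mathcal{G^{W,E}}$ makes the wide-sense argument cover everything at once.
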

\begin{proof} In view of Corollary \ref{ext} and Theorem \ref{mrti_exch} $(\mathcal{W})$, it remains to show that any $\{\beta_k\}_{k\in\IN_0}\in\mathcal{M}_{\infty}$ satisfies  $\beta_k^2\leq\beta_{k-1}\beta_{k+1}$ for all $k\in\IN$. By Hausdorff's moment problem, $\{\beta_k\}_{k\in\IN_0}\in\mathcal{M}_{\infty}$ is equivalent to $\beta_k=\IE[\tau^k]$ for $k\in\IN_0$ and a $[0,1]$-valued random variable $\tau$. The Cauchy-Schwarz inequality gives $\IE[\tau^k]^2\leq\IE[\tau^{k-1}]\IE[\tau^{k+1}]$ for all $k\in\IN$, and the corollary follows.\qed
\end{proof}

\appendix
\section{Proofs from Section \ref{sec:mgd}}
\label{sec:appa}

\hspace{-0.525cm}\textsc{Proof of Theorem \ref{thm_a}}. To prove $(\mathcal{N})$, let $(\tau_1,\ldots,\tau_d)\sim\mathcal{G^N(\textbf{p})}$. Check that
\begin{gather*}
\IP(\tau_1\hspace{-0.1cm}>\hspace{-0.1cm}n_1,\ldots,\tau_d\hspace{-0.1cm}>\hspace{-0.1cm}n_d)=\IP \big(E_I\hspace{-0.1cm} > \hspace{-0.1cm}\max_{i \in I}\{n_i\},\,\emptyset\hspace{-0.1cm} \neq \hspace{-0.1cm}I \hspace{-0.1cm}\subseteq \hspace{-0.1cm}\{1,\ldots,d\} \big) =\hspace{-0.7cm}\prod_{\emptyset \neq I \subseteq \{1,\ldots,d\}}\hspace{-0.5cm}p_I^{\max_{i \in I}\{n_i\}}.
\end{gather*}
Next, we show that the claimed survival function (\ref{Geo_surv}) satisfies the local LM property. Fix $k\in\{1,\ldots,d\}$ and let $1\leq i_1<\ldots<i_k \leq d,\,m,n_{i_1},\ldots,n_{i_k} \in \IN_0$. Define $\tilde{n}_1,\ldots,\tilde{n}_d$ by 
\begin{gather*}
\tilde{n}_j:=\left\{ \begin{array}{ll}
    n_j,        &\mbox\; j \in \{i_1,\ldots,i_k\}\\
    0,        &\mbox\; j \notin \{i_1,\ldots,i_k\}\\
\end{array}\right..
\end{gather*}
Then,
\begin{gather*}
\prod_{\emptyset \neq I \subseteq \{1,\ldots,d\}}\hspace{-0.3cm} p_{I}^{\max_{j \in I}\{\tilde{n}_j+m\}}=\Big(\prod_{\emptyset \neq I \subseteq \{1,\ldots,d\}}\hspace{-0.3cm} p_{I}^{\max_{j \in I}\{\tilde{n}_i\}}\Big)\,\Big(\prod_{\emptyset \neq I \subseteq \{1,\ldots,d\}}\hspace{-0.3cm} p_{I}^{\max_{j \in I}\{m\}}\Big),
\end{gather*}
which implies the assertion.\\
To prove $(\mathcal{W})$, let $(\tau_1,\ldots,\tau_d)\sim\mathcal{G^W(\tilde{\textbf{p}})}$. The event $\{\tau_1>n_1,\ldots,\tau_d>n_d\}$ means that the first $n_{(1)}$ outcomes are of type $I=\emptyset$, the next $n_{(2)}-n_{(1)}$ outcomes are $I\in\{\emptyset,\{\pi_{\textbf{\textit{n}}}(1)\}\}$, the next $n_{(3)}-n_{(2)}$ outcomes are $I\in\{\emptyset,\{\pi_{\textbf{\textit{n}}}(1)\}\ , \\ \{\pi_{\textbf{\textit{n}}}(2)\}\ , \{\pi_{\textbf{\textit{n}}}(1),\pi_{\textbf{\textit{n}}}(2)\}\}$, etc. Thus, with $n_{(0)}:=0$ we have
\begin{gather*}
\IP(\tau_1>n_1,\ldots,\tau_d>n_d)=\prod_{k=1}^d\Big(\sum_{\stackrel{I\subseteq \{1,\ldots,d\}}{\pi_{\textbf{n}}(i)\notin I\;\forall\,i=k,\ldots,d}} \tilde{p}_{I}\Big)^{n_{(k)}-n_{(k-1)}}.
\end{gather*}
Finally, we show that the survival function (\ref{Geo_surv_w}) satisfies the local LM property. Again, fix $k\hspace{-0.1cm}\in\hspace{-0.1cm}\{1,\ldots,d\}$ and let $1\hspace{-0.1cm}\leq \hspace{-0.1cm}i_1\hspace{-0.1cm}<\hspace{-0.1cm}\ldots\hspace{-0.1cm}<\hspace{-0.1cm}i_k \hspace{-0.1cm}\leq \hspace{-0.1cm}d,\,m,n_{i_1},\ldots,n_{i_k}\hspace{-0.1cm} \in\hspace{-0.05cm} \IN_0$. With $\tilde{n}_1,\ldots,\tilde{n}_d$ defined as above
\begin{gather*}
\bar{F}^{\mathcal{W}}_{\tilde{n}_1+m,\ldots,\tilde{n}_d+m}=\Big(\hspace{-0.6cm}\sum_{\stackrel{I\subseteq \{1,\ldots,d\}}{\pi_{\textbf{n}}(i)\notin I\;\forall\,i=1,\ldots,d}} \hspace{-0.7cm}\tilde{p}_{I}\Big)^{m}\prod_{k=1}^d\Big(\hspace{-0.6cm}\sum_{\stackrel{I\subseteq \{1,\ldots,d\}}{\pi_{\textbf{n}}(i)\notin I\;\forall\,i=k,\ldots,d}}\hspace{-0.7cm} \tilde{p}_{I}\Big)^{\tilde{n}_{(k)}-\tilde{n}_{(k-1)}}\hspace{-0.1cm}=\bar{F}^{\mathcal{W}}_{m,\ldots,m}\bar{F}^{\mathcal{W}}_{\tilde{n}_1,\ldots,\tilde{n}_d}.
\end{gather*} 
Hence the theorem.\qed

\vspace{0.7cm}

\hspace{-0.525cm}\textsc{Proof of Theorem \ref{Uniq}.}
By Theorem \ref{thm_a}, the $d$-variate wide-sense geometric distribution satisfies the local discrete LM property (\ref{LM}). Let $(\tau_1,\ldots,\tau_d)$ be a random vector on $\IN^d$ satisfying the local discrete LM property (\ref{LM}). For $I\subseteq \{1,\ldots,d\}$, define
\begin{gather}
\tilde{p}_I=\IP\big(\{\tau_i>1\;\forall i\notin I\}\cap\{\tau_i=1\;\forall i\in I\}\big).
\label{pis}
\end{gather}
We show that $(\tau_1,\ldots,\tau_d)$ follows a $\mathcal{G^W}(\{\tilde{p}_I\}_{I\subseteq \{1,\ldots,d\}})$-distribution with survival function (\ref{Geo_surv_w}). First, we prove that $(i)$ for $k=1,\ldots,d$,
\begin{gather*}
\sum_{\stackrel{I\subseteq \{1,\ldots,d\}}{\pi(i)\notin I\;\forall\,i=k,\ldots,d}} \hspace{-0.7cm}\tilde{p}_I=\IP(\tau_{\pi(k)}>1,\ldots,\tau_{\pi(d)}>1),
\end{gather*} 
for any permutation $\pi$: $\{1,\ldots,d\}\rightarrow\{1,\ldots,d\}$, $(ii)$ $\sum_{I}\tilde{p}_{I}=1$, $(iii)$ $\tilde{p}_I \in [0,1]$, and $(iv)$ $\sum_{I:k \notin I}\tilde{p}_{I}<1$ for $k=1,\ldots,d$. To see that $(i)$ holds, check that for $k=1,\ldots,d$
\begin{align*}
&\ \sum_{\stackrel{I\subseteq \{1,\ldots,d\}}{\pi(i)\notin I\;\forall\,i=k,\ldots,d}}\hspace{-0.7cm} \tilde{p}_I \hspace{0.3cm} =\hspace{-0.4cm}\sum_{\stackrel{\hspace{0.1cm}I\subseteq \{1,\ldots,d\}}{\hspace{0.2cm}\pi(i)\notin I\;\forall\,i=k,\ldots,d}}\hspace{-0.7cm}\IP\big(\{\tau_i>1\;\forall i\notin I\}\cap\{\tau_i=1\;\forall i\in I\}\big)\\
&\ =\IP(\tau_1>1,\ldots,\tau_d>1)\\
&\ \quad+\hspace{-0.1cm}\sum_{n=1}^{k-1}\hspace{-0.1cm}\hspace{-0.55cm}\sum_{\hspace{0.7cm}1\leq i_1<\ldots<i_{n}\leq k-1}\hspace{-0.4cm}\hspace{-0.65cm}\IP\big(\big\{\tau_j\hspace{-0.1cm}>\hspace{-0.1cm}1\;\forall j\hspace{-0.1cm}\notin\hspace{-0.1cm}\{\pi(i_1),\ldots,\pi(i_{n})\}\big\}\hspace{-0.1cm}\cap\hspace{-0.1cm}\big\{\tau_j\hspace{-0.1cm}=\hspace{-0.1cm}1\;\forall j\hspace{-0.1cm}\in\hspace{-0.1cm}\{\pi(i_1),\ldots,\pi(i_{n})\}\big\}\big)\hspace{-0.08cm}\\
&\ =\IP(\tau_{\pi(k)}>1,\ldots,\tau_{\pi(d)}>1),
\end{align*}
where the last equality is due to
\begin{gather*}
\hspace{-1cm}\bigcup_{\hspace{0.9cm}1\leq i_1<\ldots<i_{n}\leq k-1,\atop n=1,\ldots,k-1} \hspace{-1.45cm}\Big\{\hspace{-0.1cm}\big\{\tau_j\hspace{-0.1cm}>\hspace{-0.1cm}1\;\forall j\hspace{-0.1cm}\notin\hspace{-0.1cm}\{\pi(i_1),\ldots,\pi(i_{n})\}\hspace{-0.05cm}\big\}\hspace{-0.1cm}\cap\hspace{-0.1cm}\big\{\tau_j\hspace{-0.1cm}=\hspace{-0.1cm}1\;\forall j\hspace{-0.1cm}\in\hspace{-0.1cm}\{\pi(i_1),\ldots,\pi(i_{n})\}\hspace{-0.05cm}\big\}\hspace{-0.1cm}\Big\}\hspace{-0.1cm}\cup\hspace{-0.1cm}\{\hspace{-0.05cm}\tau_1\hspace{-0.1cm}>\hspace{-0.1cm}1,\ldots,\tau_d\hspace{-0.1cm}>\hspace{-0.1cm}1\hspace{-0.05cm}\}\\
=\big\{\tau_i>1\;\forall i\notin\{\pi(1),\ldots,\pi(k-1)\}\big\}=\{\tau_{\pi(k)}>1,\ldots,\tau_{\pi(d)}>1\}.
\end{gather*} 
$(ii)$ follows immediately from (\ref{pis}) and the fact that $(\tau_1,\ldots,\tau_d)$ is $\IN^d$-valued:
\begin{gather*}
\sum_{I\subseteq\{1,\ldots,d\}}\hspace{-0.4cm}\tilde{p}_{I}=\hspace{-0.3cm}\sum_{I\subseteq\{1,\ldots,d\}}\hspace{-0.4cm}\IP\big(\{\tau_i\hspace{-0.1cm}>\hspace{-0.1cm}1\;\forall i\hspace{-0.1cm}\notin\hspace{-0.1cm} I\}\cap\{\tau_i\hspace{-0.1cm}=\hspace{-0.1cm}1\;\forall i\hspace{-0.1cm}\in \hspace{-0.1cm}I\}\big)=\IP(\tau_1\hspace{-0.1cm}\geq\hspace{-0.1cm}1,\ldots,\tau_d\hspace{-0.1cm}\geq\hspace{-0.1cm}1)\hspace{-0.1cm}=\hspace{-0.1cm}1.
\end{gather*} 
$(iii)$ is trivial, and $(iv)$ follows from the finiteness of the components $\tau_k$, $k=1,\ldots,d$. More precisely, using $(i)$, it follows for $k=1,\ldots,d$, that 
\begin{gather*}
\sum_{I:k \notin I}\tilde{p}_{I}=\IP(\tau_k>1)<1,
\end{gather*} 
since, otherwise, by the LM property, $\IP(\tau_k>n)=\IP(\tau_k>1)^n=1$ for all $n\in\IN_0$, contradicting $\tau_k<\infty$ almost surely. %meaning that $\tau_k=\infty$ almost surely.\\
Finally, we show that $(\tau_1,\ldots,\tau_d)\sim\mathcal{G^W}(\{\tilde{p}_I\}_{I\subseteq \{1,\ldots,d\}})$ with survival function as in (\ref{Geo_surv_w}). By the local LM property (\ref{LM}),
$$\IP(\tau_{i_1}>n,\ldots,\tau_{i_k}>n)=\IP(\tau_{i_1}>1,\ldots,\tau_{i_k}>1)^n,$$ 
for $k=1,\ldots,d$, $1\leq i_1<\ldots<i_k \leq d$, and $n\in\IN_0$. From this, with $\pi_{\textbf{\textit{n}}}$: $\{1,\ldots,d\}\rightarrow\{1,\ldots,d\}$ being a permutation depending on $\textbf{\textit{n}}=(n_1,\ldots,n_d)$ such that $n_{\pi_{\textbf{n}}(1)}\leq n_{\pi_{\textbf{n}}(2)}\leq\ldots\leq n_{\pi_{\textbf{n}}(d)}$, we derive
\begin{align*}
&\ \IP\big(\tau_1>n_1,\ldots,\tau_d>n_d\big)=\IP\big(\tau_{\pi_{\textbf{n}}(1)}>n_{(1)},\ldots,\tau_{\pi_{\textbf{n}}(d)}>n_{(d)}\big)\\
&\ \;=\IP\big(\tau_{\pi_{\textbf{n}}(1)}\hspace{-0.1cm}>\hspace{-0.1cm}n_{(1)},\tau_{\pi_{\textbf{n}}(2)}\hspace{-0.1cm}>\hspace{-0.1cm}n_{(1)}\hspace{-0.1cm}+\hspace{-0.1cm}(n_{(2)}\hspace{-0.1cm}-\hspace{-0.05cm}n_{(1)}),\ldots,\tau_{\pi_{\textbf{n}}(d)}\hspace{-0.1cm}>\hspace{-0.1cm}n_{(1)}\hspace{-0.1cm}+\hspace{-0.1cm}(n_{(d)}\hspace{-0.1cm}-\hspace{-0.05cm}n_{(1)})\big)\\
&\ \;= \IP\big(\tau_{\pi_{\textbf{n}}(1)}\hspace{-0.1cm}>\hspace{-0.1cm}n_{(1)},\ldots,\tau_{\pi_{\textbf{n}}(d)}\hspace{-0.1cm}>\hspace{-0.1cm}n_{(1)}\big)\IP\big(\tau_{\pi_{\textbf{n}}(2)}\hspace{-0.1cm}>\hspace{-0.1cm}n_{(2)}\hspace{-0.1cm}-\hspace{-0.05cm}n_{(1)},\ldots,\tau_{\pi_{\textbf{n}}(d)}\hspace{-0.1cm}>\hspace{-0.1cm}n_{(d)}\hspace{-0.1cm}-\hspace{-0.05cm}n_{(1)}\big)\\
&\ \; = \IP\big(\tau_1>1,\ldots,\tau_d>1\big)^{n_{(1)}}\IP\big(\tau_{\pi_{\textbf{n}}(2)}>1,\ldots,\tau_{\pi_{\textbf{n}}(d)}>1\big)^{(n_{(2)}-n_{(1)})}\\
&\ \quad\;\times\IP\big(\tau_{\pi_{\textbf{n}}(3)}>n_{(3)}-n_{(2)},\ldots,\tau_{\pi_{\textbf{n}}(d)}>n_{(d)}-n_{(2)}\big)\\
&\ \; = \prod_{k=1}^d \IP\big(\tau_{\pi_{\textbf{n}}(k)}\hspace{-0.1cm}>\hspace{-0.1cm}1,\ldots,\tau_{\pi_{\textbf{n}}(d)}\hspace{-0.1cm}>\hspace{-0.1cm}1\big)^{n_{(k)}-n_{(k-1)}}= \prod_{k=1}^d \big(\hspace{-0.3cm}\sum_{\stackrel{I\subseteq \{1,\ldots,d\}}{\pi_{\textbf{n}}(i)\notin I\;\forall\,i=k,\ldots,d}}\hspace{-0.5cm} \tilde{p}_{I}\big)^{n_{(k)}-n_{(k-1)}},
\end{align*}
where the last equality is due to $(i)$. Definition \ref{def} together with Theorem \ref{thm_a} give the result.\qed

\section{Proofs from Section \ref{sec:exch}}
\label{sec:appb}

\hspace{-0.525cm}\textsc{Proof of Lemma \ref{lemma_exch_case}.}
First, we prove the claim for the survival function in (\ref{Geo_surv}). Necessity is shown via induction over the cardinality of subsets of $\{1,\ldots,d\}$. To begin with, it is shown that $p_{\{1\}}=\ldots=p_{\{d\}}$. Exchangeability implies for each $k=2,\ldots,d$ that
\begin{gather*}
\prod_{\stackrel{\emptyset \neq I \subseteq \{1,\ldots,d\}}{I \neq \{1\}}}p_I = \bar{F}_{0,1,\ldots,1}=\bar{F}_{1,\ldots,1,0,1,\ldots,1}=\prod_{\stackrel{\emptyset \neq I \subseteq \{1,\ldots,d\}}{I \neq \{k\}}}p_I.
\end{gather*}
Division by $\prod_{\emptyset \neq I \subseteq \{1,\ldots,d\}}p_I=\bar{F}_{1,\ldots,1}$ on both sides gives $p_{\{1\}}=p_{\{k\}}$. Concerning the induction step, assume the claim to be true for cardinalities less than or equal to $k<d$ and let $I_0 \subseteq \{1,\ldots,d\}$ with $|I_0|=k+1$. Define 
\begin{gather*}
n_j:=\left\{ \begin{array}{ll}
    1,        &\mbox\; j \in I_0\\
    0,        &\mbox\; j \notin I_0\\
\end{array}\right.,\quad \tilde{n}_j:=\left\{ \begin{array}{ll}
    1,        &\mbox\; j \in \{1,\ldots,k+1\}\\
    0,        &\mbox\; j \notin \{1,\ldots,k+1\}\\
\end{array}\right..
\end{gather*}
From exchangeability it follows that
\begin{gather*}
\prod_{\stackrel{\emptyset \neq I \subseteq \{1,\ldots,d\}}{I \nsubseteq I_0}}p_I = \bar{F}_{n_1,\ldots,n_d}=\bar{F}_{\tilde{n}_1,\ldots,\tilde{n}_d}=\prod_{\stackrel{\emptyset \neq I \subseteq \{1,\ldots,d\}}{I \nsubseteq \{1,\ldots,k+1\}}}p_I.
\end{gather*}
Division by $\prod_{\emptyset \neq I \subseteq \{1,\ldots,d\}}p_I=\bar{F}_{1,\ldots,1}$ on both sides implies
\begin{gather*}
p_{I_0}\,\underbrace{\prod_{\stackrel{\emptyset \neq I \subsetneq I_0}{|I|\leq k}}p_I}_{=:(\ast)}=p_{\{1,\ldots,k+1\}}\,\underbrace{\prod_{\stackrel{\emptyset \neq I \subsetneq \{1,\ldots,k+1\}}{|I|\leq k}}p_I}_{=:(\ast\ast)},
\end{gather*}
where $(\ast)=(\ast \ast)$ holds by induction hypothesis. Hence, $p_{\{1,\ldots,k+1\}}=p_{I_0}$, and the necessity is shown.\\
To prove sufficiency, assume that the parameters satisfy (\ref{exch_condi}) and let $\pi$ denote any permutation on $\{1,\ldots,d\}$. Then, using the notation $\pi(I):=\{\pi(i):\,i\in I\}$,
\begin{align*}
\bar{F}_{n_{\pi(1)},\ldots,n_{\pi(d)}}&=\prod_{\emptyset \neq I \subseteq \{1,\ldots,d\}} p_{I}^{\max_{i \in I}\{n_{\pi(i)}\}}=\prod_{\emptyset \neq I \subseteq \{1,\ldots,d\}} p_{I}^{\max_{i \in \pi(I)}\{n_i\}}\\
&=\prod_{\emptyset \neq I \subseteq \{1,\ldots,d\}} p_{\pi(I)}^{\max_{i \in \pi(I)}\{n_i\}}=\bar{F}_{n_1,\ldots,n_d},
\end{align*} 
where the third equality above follows from the assumption (\ref{exch_condi}).
\par 
Now, we establish the claim for the survival function in (\ref{Geo_surv_w}). Necessity is shown, again, via induction over the cardinality of subsets of $\{1,\ldots,d\}$. From $\bar{F}_{0,1,\ldots,1}=\bar{F}_{1,0,1,\ldots,1}=\ldots=\bar{F}_{1,\ldots,1,0},$ it follows that $\tilde{p}_{\{1\}}=\ldots=\tilde{p}_{\{d\}}$. Assume the claim is true for cardinalities less than or equal to $k<d$. From exchangeability of $\bar{F}_{n_1,\ldots,n_d}$, it follows for all permutations $\pi$ on $\{1,\ldots,d\}$ that
\begin{align*}
\sum_{\stackrel{I\subseteq \{1,\ldots,d\}}{i\notin I\;\forall\,i=k+2,\ldots,d}} \hspace{-0.85cm}\tilde{p}_{I}&\ \hspace{-0.2cm}=\IP(\tau_1>0,\ldots,\tau_{k+1}>0,\tau_{k+2}>1,\ldots,\tau_d>1)\\
&\ \hspace{-0.2cm}=\IP(\tau_{\pi(1)}\hspace{-0.1cm}>\hspace{-0.1cm}0,\ldots,\tau_{\pi(k+1)}\hspace{-0.1cm}>\hspace{-0.1cm}0,\tau_{\pi(k+2)}\hspace{-0.1cm}>\hspace{-0.1cm}1,\ldots,\tau_{\pi(d)}\hspace{-0.1cm}>\hspace{-0.1cm}1)\hspace{-0.1cm}=\hspace{-1.2cm}\sum_{\stackrel{I\subseteq \{1,\ldots,d\}}{\pi(i)\notin I\;\forall\,i=k+2,\ldots,d}}\hspace{-0.85cm}\tilde{p}_{I}.
\end{align*} 
From this, using the induction hypothesis, we infer
\begin{align*}
\tilde{p}_{\{1,\ldots,k+1\}}&\ =\hspace{-0.4cm}\sum_{\stackrel{I\subseteq \{1,\ldots,d\}}{i\notin I\;\forall\,i=k+2,\ldots,d}} \hspace{-0.4cm}\tilde{p}_{I}\hspace{0.4cm}-\hspace{-0.4cm}\sum_{\stackrel{I\subseteq \{1,\ldots,d\},\;|I|\leq k}{i\notin I\;\forall\,i=k+2,\ldots,d}}\hspace{-0.4cm}\tilde{p}_{I}\hspace{0.4cm}\\
&\ =\hspace{-0.4cm}\sum_{\stackrel{I\subseteq \{1,\ldots,d\}}{\pi(i)\notin I\;\forall\,i=k+2,\ldots,d}}\hspace{-0.4cm}\tilde{p}_{I}\hspace{0.4cm}-\hspace{-0.4cm}\sum_{\stackrel{I\subseteq \{1,\ldots,d\},\;|I|\leq k}{\pi(i)\notin I\;\forall\,i=k+2,\ldots,d}}\hspace{-0.4cm}\tilde{p}_{I}=\tilde{p}_{\{\pi(1),\ldots,\pi(k+1)\}}.
\end{align*} 
Hence the necessity.\\
To prove sufficiency, assume that the parameters satisfy (\ref{exch_condi}) and let $\tilde{\pi}$ be any permutation on $\{1,\ldots,d\}$. Denoting $\tilde{\pi}(\textbf{\textit{n}})=(n_{\tilde{\pi}(1)},\ldots,n_{\tilde{\pi}(d)})$, and with $\hat{\pi}$: $\{1,\ldots,d\}\rightarrow\{1,\ldots,d\}$ being a permutation depending on $\tilde{\pi}(\textbf{\textit{n}})$ such that $n_{\hat{\pi}(\tilde{\pi}(1))}\leq n_{\hat{\pi}(\tilde{\pi}(2))}\leq\ldots\leq n_{\hat{\pi}(\tilde{\pi}(d))}$, we obtain 

\begin{align*}
\bar{F}_{n_{\tilde{\pi}(1)},\ldots,n_{\tilde{\pi}(d)}}&\ =\prod_{k=1}^d\Big(\sum_{\stackrel{I\subseteq \{1,\ldots,d\}}{\hat{\pi}(i)\notin I\;\forall\,i=k,\ldots,d}} \tilde{p}_{I}\Big)^{n_{\hat{\pi}}(k)-n_{\hat{\pi}}(k-1)}\\
&\ =\prod_{k=1}^d\Big(\sum_{\stackrel{I\subseteq \{1,\ldots,d\}}{\pi_{\textbf{n}}(i)\notin I\;\forall\,i=k,\ldots,d}} \tilde{p}_{I}\Big)^{n_{(k)}-n_{(k-1)}}=\bar{F}_{n_1,\ldots,n_d},
\end{align*} 
where the second equality above follows from assumption (\ref{exch_condi}) together with $n_{\hat{\pi}}(k)=n_{(k)}$ for all $k=1,\ldots,d$. Hence the lemma.\qed

\vspace{0.7cm}

\hspace{-0.525cm}\textsc{Proof of Theorem \ref{exch}.}
We first show $(\mathcal{N})$. From Theorem \ref{thm_a} $(\mathcal{N})$ together with (\ref{Geo_surv_new}), the $d$-dimensional non-degenerate exchangeable narrow-sense geometric survival function is given by 
\begin{gather*}
\bar{F}^{\mathcal{N}}_{n_1,\ldots,n_d}=\prod_{k=1}^{d}\Big(\prod_{i=1}^{d-k+1}p_i^{\binom{d-k}{i-1}}\Big)^{n_{(d-k+1)}}=\prod_{k=1}^{d}a_k^{n_{(d-k+1)}},
\end{gather*}
with $p_k\in(0,1]$ for all $k=1,\ldots,d$, $\prod_{i=1}^d p_i<1$, and $\{a_k\}_{k=1}^d$ defined as in (\ref{ak}). The formula for $p_k$, $k=1,\ldots,d$, is justified by
\begin{align*}
\prod_{i=1}^{k}\hspace{-0.05cm}a_{d-i+1}^{(-1)^{(k-i)}\binom{k-1}{i-1}}&\ \hspace{-0.3cm}=\prod_{i=1}^{k}\Big(\prod_{j=1}^i p_{j}^{\binom{i-1}{j-1}}\,\Big)^{(-1)^{(k-i)}\binom{k-1}{i-1}}=\prod_{i=1}^{k}\prod_{j=1}^i p_{j}^{(-1)^{(k-i)}\binom{k-j}{i-j}\binom{k-1}{j-1}}\\
&\ \hspace{-0.3cm}=\hspace{-0.1cm}\prod_{j=1}^{k}p_{j}^{(-1)^{(k)}\binom{k-1}{j-1}\sum_{i=j}^{k}(-1)^{(i)}\binom{k-j}{i-j}}\hspace{-0.2cm}=\hspace{-0.1cm}\prod_{j=1}^{k}p_{j}^{(-1)^{(k+j)}\binom{k-1}{j-1}\ind_{\{k=j\}}}\hspace{-0.1cm}=\hspace{-0.1cm}p_k,
\end{align*}
where the fourth equality follows from
$$\sum_{i=j}^{k}(-1)^{(i)}\binom{k-j}{i-j}=\sum_{l=0}^{k-j}(-1)^{(l+j)}\binom{k-j}{l}=(-1)^{(j)}\ind_{\{k=j\}}.$$
Next, we show that $p_k\in(0,1]$ for all $k=1,\ldots,d$ and $\prod_{i=1}^d p_i<1$ if and only if $\{a_k\}_{k=1}^d\in\mathcal{SM}_d$. To this end, observe that for $k=0,1,\ldots,d-1$,
\begin{align*}
&\ \nabla^{k-1}(\ln a_{d-k+1}^{-1}) = \sum_{i=0}^{k-1}(-1)^{i}\,\binom{k-1}{i}(-\ln a_{d-k+i+1})\\
&\  \quad=\sum_{i=0}^{k-1}(-1)^{k-i-1}\,\binom{k-1}{i}(-\ln a_{d-i})=\sum_{i=1}^{k}(-1)^{k-i}\,\binom{k-1}{i-1}(-\ln a_{d-i+1})\\ &\ \quad=\;-\ln\Big(\prod_{i=1}^k a_{d-i+1}^{(-1)^{(k-i)}\binom{k-1}{i-1}}\Big)=\;-\ln p_k, 
\end{align*}
and $\nabla^{k-1}\hspace{-0.05cm}(\ln\hspace{-0.01cm} a_{d-k+1}^{-1})\hspace{-0.1cm}\geq \hspace{-0.1cm}0$ for all $k\hspace{-0.1cm}=\hspace{-0.1cm}1,\ldots,d$ is equivalent to $\nabla^{d-k-1}\hspace{-0.05cm}(-\hspace{-0.1cm}\ln \hspace{-0.01cm}a_{k+1})\hspace{-0.1cm}\geq \hspace{-0.1cm}0$ for all $k=0,1,\ldots,d-1$. Taking into account that $$a_1=\prod_{i=1}^d p_i^{\binom{d-1}{i-1}},$$ it follows that $p_k\in(0,1]$ for all $k=1,\ldots,d$ and $\prod_{i=1}^d p_i<1$ if and only if $\{a_k\}_{k=1}^d\in\mathcal{SM}_d$. Furthermore, Theorem \ref{thm_a} $(\mathcal{N})$ together with (\ref{ak}) and (\ref{Geo_surv_new}) imply that from any finite sequence $\{a_k\}_{k=1}^d\in\mathcal{SM}_d$ one can construct an exchangeable $d$-variate narrow-sense geometric distribution by choosing the parameters $p_1,\ldots,p_d$ as 
\begin{gather*}
p_k:=\prod_{i=1}^k a_{d-i+1}\,^{(-1)^{(k-i)}\binom{k-1}{i-1}}, \quad k=1,\ldots,d.
\end{gather*} 
Finally, given $\{b_k\}_{k=1}^d$ as in (\ref{b's}), and denoting $b_0:=1$, observe that
\begin{gather*}
\nabla^{d-k}\ln a_k^{-1}=\nabla^{d-k}\ln b_{k-1}-\nabla^{d-k}\ln b_k=\nabla^{d-k+1}\ln b_{k-1}
\end{gather*}
for all $k=1,\ldots,d$. From this, we conclude that $\{b_k\}_{k=0}^d\in\mathcal{LM}_{d+1}$ if and only if $\{a_k\}_{k=1}^d\in\mathcal{SM}_d$. With (\ref{kurzi}), the assertion follows. 
\par
Next, we prove $(\mathcal{W})$. From Theorem \ref{thm_a} $(\mathcal{W})$ together with (\ref{Geo_surv_new_w}) and (\ref{beta's}), the $d$-dimensional exchangeable wide-sense geometric survival function is given by 
\begin{gather*}
\bar{F}^{\mathcal{W}}_{n_1,\ldots,n_d}=\prod_{k=1}^{d}\Big(\sum_{i=1}^{d-k+1}\binom{d-k}{i-1}\tilde{p}_i\Big)^{n_{(d-k+1)}-n_{(d-k)}}=\;\prod_{k=1}^{d}\beta_k^{n_{(d-k+1)}-n_{(d-k)}},
\end{gather*} 
with $\tilde{p}_1,\ldots,\tilde{p}_d\in [0,1]$, $\sum_{i=1}^{d}\binom{d}{i-1}\tilde{p}_i\leq1$, $\sum_{i=1}^{d}\binom{d-1}{i-1}\tilde{p}_i<1$, and $\{\beta_k\}_{k=1}^d$ defined as in (\ref{beta's}).
The formula for $\tilde{p}_k$, $k=1,\ldots,d$, is justified by
\begin{align*}
\nabla^{k-1}\beta_{d-k+1}&\ \hspace{-0.2cm}=\hspace{-0.1cm}\sum_{i=0}^{k-1}(-1)^i\hspace{-0.05cm}\binom{k-1}{i}\beta_{d-k+i+1}\hspace{-0.1cm}=\hspace{-0.1cm}\sum_{i=0}^{k-1}(-1)^i\hspace{-0.05cm}\binom{k-1}{i}\hspace{-0.1cm}\sum_{j=1}^{k-i}\hspace{-0.1cm}\binom{k-i-1}{j-1}\tilde{p}_j\\
&\ \hspace{-1.75cm}=\sum_{j=1}^{k}\sum_{i=0}^{k-j}(-1)^i\binom{k-1}{i}\binom{k-i-1}{j-1}\tilde{p}_j=\sum_{j=1}^{k}\binom{k-1}{j-1}\tilde{p}_j\sum_{i=0}^{k-j}(-1)^i\binom{k-j}{i}\\
&\ \hspace{-1.75cm}=\sum_{j=1}^{k}\binom{k-1}{j-1}\tilde{p}_j\ind_{\{k=j\}}=\tilde{p}_k.
\end{align*}
Now, we infer $(1,\beta_1,\ldots,\beta_d)\hspace{-0.1cm}\in\hspace{-0.1cm}\mathcal{M}_{d+1}$ from $\tilde{p}_1,\ldots,\tilde{p}_d\hspace{-0.1cm}\in \hspace{-0.1cm}[0,1]$ with $\sum_{i=1}^{d}\hspace{-0.1cm}\binom{d}{i-1}\tilde{p}_i\hspace{-0.1cm}\leq\hspace{-0.1cm}1$ and $\sum_{i=1}^{d}\binom{d-1}{i-1}\tilde{p}_i<1$, and vice versa. For $k=1,\ldots,d$, it holds that $\nabla^{d-k}\beta_k=\tilde{p}_{d-k+1}$. Further,
\begin{align*}
\nabla^d\beta_0 &\ \hspace{-0.2cm}=\hspace{-0.15cm}\sum_{j=0}^{d}(-1)^j\hspace{-0.05cm}\binom{d}{j}\beta_j\hspace{-0.1cm}=\hspace{-0.1cm}\beta_0\hspace{-0.1cm}+\hspace{-0.1cm}\sum_{j=1}^{d}(-1)^j\hspace{-0.05cm}\binom{d}{j}\beta_j\hspace{-0.1cm}=\hspace{-0.1cm}1\hspace{-0.1cm}+\hspace{-0.1cm}\sum_{j=1}^{d}(-1)^j\hspace{-0.05cm}\binom{d}{j}\hspace{-0.1cm}\sum_{i=1}^{d-j+1}\hspace{-0.15cm}\binom{d-j}{i-1}\tilde{p}_i\\
&\ \hspace{-0.2cm}=\hspace{-0.1cm}1\hspace{-0.1cm}+\hspace{-0.1cm}\sum_{i=1}^{d}\tilde{p}_i\hspace{-0.15cm}\sum_{j=1}^{d-i+1}\hspace{-0.1cm}(-1)^j\hspace{-0.05cm}\binom{d}{j}\binom{d-j}{i-1}\hspace{-0.1cm}=\hspace{-0.1cm}1\hspace{-0.1cm}+\hspace{-0.1cm}\sum_{i=1}^{d}\hspace{-0.1cm}\binom{d}{i-1}\tilde{p}_i\hspace{-0.1cm}\sum_{j=1}^{d-i+1}\hspace{-0.1cm}(-1)^j\hspace{-0.05cm}\binom{d-i+1}{j}\\
&\ \hspace{-0.2cm}=\hspace{-0.1cm}1\hspace{-0.1cm}+\hspace{-0.1cm}\sum_{i=1}^{d}\hspace{-0.05cm}\binom{d}{i-1}\tilde{p}_i\Big(\sum_{j=0}^{d-i+1}(-1)^j\hspace{-0.05cm}\binom{d-i+1}{j}\hspace{-0.1cm}-\hspace{-0.1cm}1\Big)\hspace{-0.1cm}=\hspace{-0.1cm}1\hspace{-0.1cm}-\hspace{-0.1cm}\sum_{i=1}^{d}\hspace{-0.05cm}\binom{d}{i-1}\tilde{p}_i.
\end{align*}
Lastly, $\beta_1=\sum_{i=1}^d\binom{d-1}{i-1}\tilde{p}_i$, and, therefore, $(1,\beta_1,\ldots,\beta_d)\in\mathcal{M}_{d+1}$ gives precisely the conditions on $\tilde{p}_1,\ldots,\tilde{p}_d$ as stated in the assertion. Furthermore, Theorem \ref{thm_a} $(\mathcal{N})$ together with (\ref{Geo_surv_new_w}) and (\ref{beta's}) imply that from any finite sequence $\{\beta_k\}_{k=1}^d$ with $(1,\beta_1,\ldots,\beta_d)\in\mathcal{M}_{d+1}$ one can construct an exchangeable $d$-dimensional wide-sense geometric distribution by choosing the parameters $\tilde{p}_1,\ldots,\tilde{p}_d$ as 
\begin{gather*}
\tilde{p}_k=\nabla^{k-1}\beta_{d-k+1}, \quad k=1,\ldots,d.
\end{gather*} This completes the proof.\qed
\vspace{0.7cm} 

\hspace{-0.525cm}\textsc{Proof of Theorem \ref{prop}.}
Sufficiency follows directly from Theorem~\ref{exch}~$(\mathcal{W})$. To prove necessity, suppose that~\eqref{Geo_surv_new} defines a $d$-dimensional survival function. \red{As is easily seen, it is exchangeable and satisfies the local discrete LM property~\eqref{LM}. Hence, in view of Theorem \ref{Uniq}, it is the survival function of a $d$-variate exchangeable wide-sense geometric distribution. The claim follows now from Theorem \ref{exch}~$(\mathcal{W})$.} \qed

\section{Proofs from Section \ref{sec:ext}}
\label{sec:appc}

\hspace{-0.525cm}\textsc{Proof of Proposition \ref{extend}.}
\red{That $(i)$ implies $(iv)$ follows directly from Corollary~\ref{ext}, recalling that $\mathcal{G^{N,E}}\subseteq\, \mathcal{G^{W,E}}$, together with the fact that $\mathcal{LM}_{\infty}$ is closed under positive powers. The reverse implication is easy: since $\{b_k\}_{k\in\IN_0}$ is strictly positive, it follows for all $k\in\IN_0$ and all $j\in\IN$ that 
 \begin{gather*}
 \nabla^{j}\ln b_k \,=\, \lim_{n\rightarrow\infty}n\,\nabla^{j}(\sqrt[n]{b_k}-1) \,=\, \lim_{n\rightarrow\infty}n\,\nabla^{j}\sqrt[n]{b_k}\,\geq\, 0.									
 \end{gather*}
}The equivalence of $(iii)$ and $(iv)$ is obtained from the closure of $\mathcal{M}_{\infty}$ under limits and products. \red{To see why $(ii)$ implies $(iv)$, note that from the infinite divisibility of $\kappa$, we have for all $n\hspace{-0.03cm}\in\hspace{-0.03cm}\IN$ that
\begin{gather}
\sqrt[n]{b_k} = \int_{[0,\infty]} e^{-k x} \kappa_n(d x), \quad k\in\IN_0,
\label{mu_n}
\end{gather} 
where $\kappa_n$ is such that $\kappa$ is the $n$-fold convolution on $\kappa_n$ with itself. One verifies using a simple induction that for all $n,j\in\IN$ and for all $k\in\IN_0$,     
\begin{gather*}
\nabla^{j}\hspace{-0.075cm}\sqrt[n]{b_k} \; =\; \nabla^{j-1}\hspace{-0.075cm}\sqrt[n]{b_k}-\nabla^{j-1}\hspace{-0.075cm}\sqrt[n]{b_{k+1}} \,  = \,  \int_{[0,\infty]} \hspace{-0.1cm}e^{-k x}\big(1-e^{-x}\big)^{j} \kappa_n(d x)\, \geq\, 0,
\end{gather*}
implying that $\{\sqrt[n]{b_k}\}_{k\in\IN_0}$ is completely monotone for all $n\in\IN$. Moreover, $b_0=1$ since $\kappa$ is a probability measure, $b_1<1$ because $\kappa$ is not concentrated at $0$, and $b_k>0$ for all $k\in\IN_0$ as $\kappa$ is not concentrated at $\infty$. Hence, $\{\sqrt[n]{b_k}\}_{k\in\IN_0}\in\mathcal{M}_{\infty}$ for all $n\in\IN$. Finally, to show that $(iv)$ implies $(ii)$, it is enough to notice that, in view of Hausdorff's moment problem \cite{hausdorff23}, $\{\sqrt[n]{b_k}\}_{k\in\IN_0}$ can be represented as the moment sequence (\ref{mu_n}) for some unique probability measure $\kappa_n$ on $[0,\infty]$ for all $n\in\IN$. Therefore, for all $n\in\IN$, $\kappa_1$ is the $n$-fold convolution of $\kappa_n$ with itself, meaning that $\kappa_1$ is infinitely divisible. And  $b_1\in(0,1)$ yields $\kappa(\{0\})<1$ and $\kappa(\{\infty\})<1$}.	 \qed 
% By Corollary \ref{ext}, $\{b_k\}_{k\in\IN_0}\in\mathcal{LM}_{\infty}$ if and only if $\{a_k\}_{k\in\IN}\in\mathcal{SM}_{\infty}$, where $a_k=b_k/b_{k-1}$. Corollary 1 and Lemma 3 in \cite{lorch83} imply that $\{a_k\}_{k\in\IN}\hspace{-0.1cm}\in\hspace{-0.1cm}\mathcal{SM}_{\infty}$ precisely when $\{e^{-r \ln b_k^{-1}}\}^{\infty}_{k=0}\hspace{-0.01cm}=\{b_k^r\}^{\infty}_{k=0}$ is completely monotone for each fixed $r>0$. This already gives the equivalence of $(i)$ and $(iii)$. To infer $(ii)$ from $(iii)$, assume that $\{b_k^r\}^{\infty}_{k=0}$ is completely monotone for each fixed $r>0$. Then, first, by Hausdorff's moment problem, see \cite{hausdorff21,hausdorff23}, there exists a probability space supporting a $[0,1]$-valued random variable $Y$ such that $b_k=\IE[Y^k]$ for all $k\in\IN_0$. Since $b_1\in(0,1)$, it follows that $\IP(0< Y<1)>0$. Second, by Remark 6 in \cite{lorch83}, $b_k=e^{-\mu_k}$ with $\{-\nabla\mu_k\}^{\infty}_{k=0}$ completely monotone and $\mu_0=0$. Putting this together and introducing a $[0,\infty]$-valued $X:=-\ln(Y)$ with $\IP(0<X<\infty)>0$, we obtain $\IE[e^{- k\,X}]=e^{-\mu_k}$ for all $k\in\IN_0$, where $\{-\nabla\mu_k\}^{\infty}_{k=0}$ is completely monotone and $\mu_0=0$. Lemma \ref{moments} gives $(ii)$. $(iii)$ can be easily inferred from $(ii)$ using Remark 6 in \cite{lorch83} and Lemma \ref{moments}. Finally, since $\mathcal{M}_{\infty}$ is closed under limits and products, the equivalence of $(iii)$ and $(iv)$ follows. This completes the proof.
\vspace{0.7cm}

\hspace{-0.525cm}\textsc{Proof of Theorem \ref{thm_ciid}.}
We first show that the model in $(\mathcal{W})$ generates precisely the family of $\mathcal{G^{W,E}}$-distributions. By Corollary \ref{ext} $(\mathcal{W})$, every $\mathcal{G^{W,E}}$-distribution is characterized by an infinite sequence $(1,\beta_1,\beta_2,\ldots)\in\mathcal{M}_{\infty}$. From the given sequence $(1,\beta_1,\beta_2,\ldots)\in\mathcal{M}_{\infty}$, we construct an infinite exchangeable sequence $\{\tau_k\}_{k \in \IN}$ of random variables such that $(\tau_1,\ldots,\tau_d)$ has the survival function (\ref{Geo_surv_new_w}) for each $d\geq2$. To this end, denote $\beta_0:=1$ and assume that $\{\beta_k\}_{k \in \IN_0}\in\mathcal{M}_{\infty}$. By virtue of Hausdorff's moment problem, see \cite{hausdorff21,hausdorff23}, there exists a unique probability law $\eta$ on the unit interval $[0,1]$ such that
\begin{gather*}
\beta_k=\int_{[0,1]}x^k\,\eta(dx),\quad k\in \IN_0.
\end{gather*}
Now consider a probability space $(\Omega,\F,\IP)$ supporting the following independent sequences of random variables:
\begin{itemize}
\item An i.i.d.\ sequence $\{Y_k\}_{k \in \IN}$, where $Y_1 \sim \eta$.
\item An i.i.d.\ sequence $\{E_k\}_{k \in \IN}$, where $E_1 \sim Exp(1)$.
\end{itemize}
Define the i.i.d.\ sequence $\{X_k\}_{k \in \IN}$ by $X_k:=-\ln(Y_k)$, $k \in \IN$. Notice that it may hold that $\IP(Y_k=0)>0$, in which case we have $\IP(X_k=\infty)>0$ by conveniently defining $\ln 0: = -\infty$. Define the sequence $\{\tau_k\}_{k \in \IN}$ by
\begin{gather*}
\tau_k:=\min\{n \in \IN\,:\,X_1+\ldots+X_n \geq E_k\},\quad k\in \IN.
\end{gather*} 
Notice that $\IP(X_k=0)<1$ by the assumption $\beta_1<1$, implying that the $\tau_k$ are well-defined in $\IN$. We compute for $n_1,\ldots,n_d \in \IN_0$ that
\begin{align*}
&\IP(\tau_1>n_1,\ldots,\tau_d>n_{d})=\IP(X_1+\ldots+X_{n_1}<E_1,\ldots,X_1+\ldots+X_{n_d}<E_d)\\
&\; =\IE\Big[\IP(X_1\hspace{-0.1cm}+\hspace{-0.1cm}\ldots\hspace{-0.1cm}+\hspace{-0.1cm}X_{n_1}\hspace{-0.1cm}<\hspace{-0.1cm}E_1,\ldots,X_1\hspace{-0.1cm}+\hspace{-0.1cm}\ldots\hspace{-0.1cm}+\hspace{-0.1cm}X_{n_d}\hspace{-0.1cm}<\hspace{-0.1cm}E_d\,|\,X_1,\ldots,X_{n_{(d)}})\Big]\\
&\; =\IE\Big[\prod_{k=1}^{d}\IP(X_1+\ldots+X_{n_k}<E_k\,|\,X_1,\ldots,X_{n_{(d)}})\Big]= \IE\Big[\prod_{k=1}^{d}e^{-(X_1+\ldots+X_{n_k})}\Big]\\
&\; = \IE\Big[ e^{-d\,(X_1+\ldots+X_{n_{(1)}})}\,e^{-(d-1)\,(X_{n_{(1)}+1}+\ldots+X_{n_{(2)}})}\,\cdots\,e^{-(X_{n_{(d-1)}+1}+\ldots+X_{n_{(d)}})} \Big]\\
&\; = \hspace{-0.1cm}\prod_{k=1}^{d}\hspace{-0.1cm}\IE\Big[e^{-(d-k+1)\,(X_{n_{(k-1)}+1}+\ldots+X_{n_{(k)}})} \Big]= \prod_{k=1}^{d}\IE\Big[\big(e^{-X_1}\big)^{(d-k+1)} \Big]^{n_{(k)}-n_{(k-1)}}\\
&\; = \hspace{-0.1cm}\prod_{k=1}^{d}\hspace{-0.1cm}\IE\Big[\big(e^{-X_1}\big)^{k} \Big]^{n_{(d-k+1)}-n_{(d-k)}}\hspace{-0.3cm}= \hspace{-0.1cm}\prod_{k=1}^{d}\hspace{-0.1cm}\IE\big[Y_1^{k} \big]^{n_{(d-k+1)}-n_{(d-k)}}\hspace{-0.1cm}=\hspace{-0.1cm}\prod_{k=1}^{d}\hspace{-0.1cm}\beta_k^{n_{(d-k+1)}-n_{(d-k)}}.
\end{align*}
Finally, by Corollary \ref{ext} $(\mathcal{N})$, the distribution of $(\tau_1,\ldots,\tau_d)$ is geometric in the narrow sense if and only if $(1,\beta_1,\beta_2,\ldots)\in\mathcal{LM}_{\infty}$. In view of Proposition \ref{extend}, it is the case precisely when the distribution of $X_1$ is infinitely divisible.\qed

\section{Proofs from Section \ref{sec:dep}}
\label{sec:appd}

\hspace{-0.525cm}\textsc{Proof of Theorem \ref{mrti_exch}.}
Let $\{\bar{F}_{n_1,\ldots,n_d}\}_{n_1,\ldots,n_d \in \IN_0}$, given by
\begin{gather*}
\bar{F}_{n_1,\ldots,n_d}=\prod_{k=1}^{d}a_k^{n_{(d-k+1)}}=\prod_{k=1}^{d}b_k^{n_{(d-k+1)}-n_{(d-k)}},
\end{gather*} be a $d$-dimensional survival function, where $b_0:=1$ and $\{b_k\}_{k=1}^d$ is as in (\ref{b's}). We show that $\{\bar{F}_{n_1,\ldots,n_d}\}_{n_1,\ldots,n_d \in \IN_0}$ is MRTI if and only if $\frac{a_k}{a_{k+1}}=\frac{b_k^2}{b_{k-1}b_{k+1}}\leq1$  $\forall k=1,\ldots,d-1$. 
\begin{gather}
\IP(\tau_k\hspace{-0.1cm} >\hspace{-0.1cm} n_k \,|\,\tau_1\hspace{-0.1cm} > \hspace{-0.1cm}n_1,\ldots,\tau_{k-1} \hspace{-0.1cm}> \hspace{-0.1cm}n_{k-1})\,\geq\,\IP(\tau_k \hspace{-0.1cm}>\hspace{-0.1cm} n_k \,|\,\tau_1 \hspace{-0.1cm}> \hspace{-0.1cm}\tilde{n}_1,\ldots,\tau_{k-1}\hspace{-0.1cm} >\hspace{-0.1cm} \tilde{n}_{k-1})
\label{ratio}
\end{gather} 
holds for all $n_i,\;\tilde{n}_i\in\IN_0$ with $n_i\geq\tilde{n}_i$, $i=1,\ldots,k-1$, and all $k=2,\ldots,d$ if and only if $a_k/a_{k+1}\leq1$. Fix an arbitrary $k\in\{2,\ldots,d\}$. Define the sequence $\{T_i\}_{i=1}^{k-1}$ recursively by  
\begin{align*}
T_i:=T_{i-1} &\ + \emph{sort}(n_1,\ldots,n_{k-1})[i] - \emph{sort}(\tilde{n}_1,\ldots,\tilde{n}_{k-1})[i]\\ 
&\ + \emph{sort}(\tilde{n}_1,\ldots,\tilde{n}_{k-1},n_k)[i] - \emph{sort}(n_1,\ldots,n_{k})[i], 
\end{align*}
where $T_0:=0$, and for $i\in\IN$, $i\leq k$, $\emph{sort}(n_1,\ldots,n_k)[i]$ denotes the i-th reverse order statistic of the set $\{n_1,\ldots,n_d\}$, so that $\emph{sort}(n_1,\ldots,n_k)[1]\geq \emph{sort}(n_1,\ldots,n_k)[2]\geq\ldots\geq \emph{sort}(n_1,\ldots,n_k)[k]$. Observe that 
\begin{align*}
T_{k-1} = &\ \sum_{i=1}^{k-1} \emph{sort}(n_1,\ldots,n_{k-1})[i] - \sum_{i=1}^{k-1} \emph{sort}(\tilde{n}_1,\ldots,\tilde{n}_{k-1})[i]\\
&\ + \sum_{i=1}^{k-1} \emph{sort}(\tilde{n}_1,\ldots,\tilde{n}_{k-1},n_k)[i] - \sum_{i=1}^{k-1} \emph{sort}(n_1,\ldots,n_{k})[i] \\
= &\  \sum_{i=1}^{k-1} n_i + \sum_{i=1}^{k-1} \tilde{n}_i + n_k - \sum_{i=1}^{k} n_i - \sum_{i=1}^{k-1} \tilde{n}_i\\
&\ + \emph{sort}(n_1,\ldots,n_{k})[k] - \emph{sort}(\tilde{n}_1,\ldots,\tilde{n}_{k-1},n_k)[k]\\ = &\ \emph{sort}(n_1,\ldots,n_{k})[k] - \emph{sort}(\tilde{n}_1,\ldots,\tilde{n}_{k-1},n_k)[k].
\end{align*}
This allows us to write the ratio of conditional probabilities in (\ref{ratio}) as
\begin{align*}
&\ \hspace{-0.15cm}\frac{\IP(\tau_k \hspace{-0.1cm}>\hspace{-0.1cm} n_k \,|\, \tau_1\hspace{-0.1cm} > \hspace{-0.1cm}n_1,\ldots,\tau_{k-1} \hspace{-0.1cm}>\hspace{-0.1cm} n_{k-1})}{\IP(\tau_k \hspace{-0.1cm}> \hspace{-0.1cm}n_k \,|\,\tau_1\hspace{-0.1cm}>\hspace{-0.1cm}\tilde{n}_1,\ldots,\tau_{k-1}\hspace{-0.1cm}>\hspace{-0.1cm} \tilde{n}_{k-1})}
\hspace{-0.1cm} =\hspace{-0.1cm}\frac{\prod_{i=1}^{k}\hspace{-0.1cm} a_i^{\emph{sort}(n_1,\ldots,n_{k})[i]}\prod_{i=1}^{k-1}\hspace{-0.1cm}a_i^{\emph{sort}(\tilde{n}_1,\ldots,\tilde{n}_{k-1})[i]}}{\prod_{i=1}^{k-1} \hspace{-0.1cm}a_i^{\emph{sort}(n_1,\ldots,n_{k-1})[i]}\prod_{i=1}^{k}\hspace{-0.1cm}a_i^{\emph{sort}(\tilde{n}_1,\ldots,\tilde{n}_{k-1},n_k)[i]}}\\
&\ \hspace{-0.15cm}=\hspace{-0.1cm} a_k^{\emph{sort}(n_1,\ldots,n_{k})[k] - \emph{sort}(\tilde{n}_1,\ldots,\tilde{n}_{k-1},n_k)[k]}\hspace{-0.1cm} \prod_{i=1}^{k-1}\hspace{-0.1cm} a_i^{T_{i-1}-T_i} \hspace{-0.1cm} =\hspace{-0.1cm}  \Big(\frac{a_2}{a_{1}}\Big)^{T_1}\hspace{-0.1cm} \Big(\frac{a_3}{a_{2}}\Big)^{T_2}\hspace{-0.2cm} \ldots\Big(\frac{a_k}{a_{k-1}}\Big)^{T_{k-1}}\hspace{-0.2cm}.
\end{align*}
It can be easily seen that the terms $T_1$ and $T_{k-1}$ are non-negative. We show that actually all elements of $\{T_i\}_{i=1}^{k-1}$ are non-negative, which results in 
\begin{gather*}
\IP(\tau_k \hspace{-0.1cm} >\hspace{-0.1cm}  n_k \,|\,\tau_1 \hspace{-0.1cm} >\hspace{-0.1cm}  n_1,\ldots,\tau_{k-1} \hspace{-0.1cm} > \hspace{-0.1cm} n_{k-1})\,\geq\,\IP(\tau_k \hspace{-0.1cm} > \hspace{-0.1cm} n_k \,|\,\tau_1 \hspace{-0.1cm} > \hspace{-0.1cm} \tilde{n}_1,\ldots,\tau_{k-1} \hspace{-0.1cm} > \hspace{-0.1cm} \tilde{n}_{k-1})
\end{gather*} 
if and only if $a_{i+1}/a_{i}\geq1$, $i=1,\ldots,k-1$, as claimed. The equivalence holds, since for each $i\in\{2,\ldots,k\}$,  
\begin{gather*}
\frac{\IP(\tau_k > n_k \,|\, \tau_1 > n_1,\ldots,\tau_{k-1} > n_{k-1})}{\IP(\tau_k > n_k \,|\,\tau_1>\tilde{n}_1,\ldots,\tau_{k-1}> \tilde{n}_{k-1})}
 =\frac{a_i}{a_{i-1}}
\end{gather*}
by choosing $n_j=\tilde{n}_{j}=2$ for all $j=1,\ldots,i-2$, $n_j=\tilde{n}_{j}=0$ for all $j=i,\ldots,k-1$, $n_{i-1}=n_k=1$, and $\tilde{n}_{i-1}=0$. 
\par
To complete the proof, we show that $\{T_i\}_{i=1}^{k-1}$ is non-negative. For this, order the sets $\{n_1,\ldots,n_{k}\}$ and $\{\tilde{n}_1,\ldots,\tilde{n}_{k-1},n_k\}$. Then, there exist $j,\;l\in\{1,\ldots,k\}$, $j\leq l$, such that
\begin{align*}
\;\qquad\qquad n_k&\ = \emph{sort}(\tilde{n}_1,\ldots,\tilde{n}_{k-1},n_k)[j],\\
\;\qquad\qquad n_k&\ = \emph{sort}(n_1,\ldots,n_{k})[l].
\end{align*}
Notice that 
\begin{align*}
\emph{sort}(\tilde{n}_1,\ldots,\tilde{n}_{k-1},n_k)[m]&\ = \emph{sort}(\tilde{n}_1,\ldots,\tilde{n}_{k-1})[m] \quad\qquad \forall \;m < j,\\
\emph{sort}(\tilde{n}_1,\ldots,\tilde{n}_{k-1},n_k)[m]&\ = \emph{sort}(\tilde{n}_1,\ldots,\tilde{n}_{k-1})[m-1] \;\quad \forall \;m > j,\\
\emph{sort}(n_1,\ldots,n_{k})[m]&\ = \emph{sort}(n_1,\ldots,n_{k-1})[m] \quad\qquad \forall \;m < l,\\
\emph{sort}(n_1,\ldots,n_{k})[m]&\ = \emph{sort}(n_1,\ldots,n_{k-1})[m-1] \;\quad \forall \;m > l.
\end{align*}
Hence, $T_i=0$ for all $i<j$. For $i \geq l$, we compute 
\begin{align*}
T_i  =&\ \sum_{m=1}^{i} \emph{sort}(n_1,\ldots,n_{k-1})[m] - \sum_{m=1}^{i} \emph{sort}(\tilde{n}_1,\ldots,\tilde{n}_{k-1})[m]\\ 
&\ + \sum_{m=1}^{i} \emph{sort}(\tilde{n}_1,\ldots,\tilde{n}_{k-1},n_k)[m] - \sum_{m=1}^{i} \emph{sort}(n_1,\ldots,n_{k})[m]\\
 =&\ \sum_{m=1}^{i} \emph{sort}(n_1,\ldots,n_{k-1})[m] - \sum_{m=1}^{i} \emph{sort}(\tilde{n}_1,\ldots,\tilde{n}_{k-1})[m]\\ 
&\ + \sum_{m=1}^{j-1} \emph{sort}(\tilde{n}_1,\ldots,\tilde{n}_{k-1})[m] + n_k + \sum_{m=j+1}^{i} \emph{sort}(\tilde{n}_1,\ldots,\tilde{n}_{k-1})[m-1]\\
&\ - \sum_{m=1}^{l-1} \emph{sort}(n_1,\ldots,n_{k-1})[m] - n_k - \sum_{m=l+1}^{i} \emph{sort}(n_1,\ldots,n_{k-1})[m-1]\\
=&\ \emph{sort}(n_1,\ldots,n_{k-1})[i] - \emph{sort}(\tilde{n}_1,\ldots,\tilde{n}_{k-1})[i]\; \geq \;0.
\end{align*}
Finally, if $j < l$, then, arguing analogously, we obtain for $j \leq i < l$,
\begin{gather*}
T_i = n_k - \emph{sort}(\tilde{n}_1,\ldots,\tilde{n}_{k-1})[i]\; \geq \; 0. 
\end{gather*} 
This verifies $(\mathcal{W})$. $(\mathcal{N})$ follows by noticing that $\{a_k\}_{k=1}^d$ is non-decreasing for any set of parameters $\{a_k\}_{k=1}^d$ of a $d$-variate $\mathcal{G^{N,X}}$-distribution.\qed

\bibliographystyle{elsarticle-num}

\end{document}